\documentclass[11pt]{amsart}

\usepackage{amssymb, textcomp, dsfont}
\usepackage[all]{xy}
\usepackage{hyperref}
\usepackage{aliascnt}
\usepackage{color}
\usepackage{soul}
\usepackage{mathtools,array}
\usepackage{tikz-cd}
\usepackage{lmodern}
\usepackage[inline]{enumitem}
\usepackage{xcolor}

\numberwithin{equation}{section}

\newtheorem{lma}{Lemma}[section]

\newaliascnt{thmCt}{lma}
\newtheorem{thm}[thmCt]{Theorem}
\aliascntresetthe{thmCt}

\newaliascnt{corCt}{lma}
\newtheorem{cor}[corCt]{Corollary}
\aliascntresetthe{corCt}

\newaliascnt{prpCt}{lma}
\newtheorem{prp}[prpCt]{Proposition}
\aliascntresetthe{prpCt}

\newtheorem*{thm*}{Theorem}
\newtheorem*{cor*}{Corollary}
\newtheorem*{prp*}{Proposition}

\theoremstyle{definition}

\newaliascnt{pgrCt}{lma}
\newtheorem{pgr}[pgrCt]{}
\aliascntresetthe{pgrCt}

\newaliascnt{dfnCt}{lma}

\aliascntresetthe{dfnCt}

\newaliascnt{rmkCt}{lma}
\newtheorem{rmk}[rmkCt]{Remark}
\aliascntresetthe{rmkCt}

\newaliascnt{rmksCt}{lma}

\aliascntresetthe{rmksCt}

\newaliascnt{prblCt}{lma}
\newtheorem{prbl}[prblCt]{Problem}
\aliascntresetthe{prblCt}

\newaliascnt{exaCt}{lma}

\aliascntresetthe{exaCt}

\newaliascnt{exasCt}{lma}

\aliascntresetthe{exasCt}

\newaliascnt{conjCt}{lma}

\aliascntresetthe{conjCt}

\newcommand{\N}{\mathbb{N}}

\newcommand{\K}{\mathrm{K}}

\DeclareMathOperator{\Cu}{Cu}

\makeatletter

\newcommand{\Rmnum}[1]{\expandafter\@slowromancap\romannumeral #1@}
\makeatother

\SetLabelAlign{Center}{\makebox[2em]{#1}}
\newcommand{\beginEnumStatements}{\begin{enumerate}[label=(\arabic*),align=Center,leftmargin=*, widest=iiii]}
\newcommand{\beginEnumConditions}{\begin{enumerate}[label={\rm (\roman*)},align=Center, leftmargin=*, widest=iiii]}

\usepackage{stmaryrd}

\newcommand{\cc}{\subset\!\subset}
\newcommand{\tiphi}{\tilde{\phi}}
\newcommand{\tipsi}{\tilde{\psi}}
\newcommand{\tialpha}{\tilde{\alpha}}

\newcommand{\V}{\mathrm{V}}
\newcommand{\CP}{\mathrm{CP}}
\newcommand{\Id}{\mathrm{Id}}

\theoremstyle{plain}
\newcounter{thmintroctr}

\newtheorem{thmintro}[thmintroctr]{Theorem}
\newtheorem{corintro}[thmintroctr]{Corollary}

\theoremstyle{definition}
\newcounter{goalintroctr}

\begin{document}

\title[Cuntz semigroups and stable rank one]{The Cuntz semigroup of rings with stable rank one}

\author{Pere Ara}
\author{Francesc Perera}

\author{Guillem Quingles}

\date{\today}

\address{P.~Ara,
Departament de Matem\`{a}tiques,
Universitat Aut\`{o}noma de Barcelona,
\linebreak 08193 Bellaterra, Barcelona, Spain}
\email[]{Pere.Ara@uab.cat}
\address{
F.~Perera, 
Departament de Matem\`{a}tiques,
Universitat Aut\`{o}noma de Barcelona,
\linebreak 08193 Bellaterra, Barcelona, Spain, and
Centre de Recerca Matem\`atica, Edifici Cc, Campus de Bellaterra,  08193 Cerdanyola del Vall\`es, Barcelona, Spain}
\email[]{Francesc.Perera@uab.cat}
\urladdr{https://mat.uab.cat/web/perera}

\address{Guillem~Quingles, 
Departament de Matem\`{a}tiques,
Universitat Aut\`{o}noma de Barcelona,
\linebreak 08193 Bellaterra, Barcelona, Spain}
\email{Guillem.Quingles@uab.cat}

\subjclass[2020]{16D10, 16D40, 06F05, 19B10, 16D70, 46L05}

\keywords{Associative rings, projective modules, Cuntz semigroups, stable rank one, order-cancellation}

\thanks{
 The authors were partially supported by the Spanish State Research Agency (grant No.\  PID2023-147110NB-I00), by the Comissionat per a Universitats i Recerca de la Generalitat de Ca\-ta\-lu\-nya (grant No.\ 2021-SGR-01015) and by the Spanish State Research Agency through the Severo Ochoa and María de Maeztu Program for Centers and Units of Excellence in R\&D (CEX2020-001084-M).
 The third named author was also partially supported by the grant PRE2021-099580.
 }

\begin{abstract}
We study the Cuntz semigroup of arbitrary rings with stable rank one. We prove that Cuntz subequivalence between countably generated projective right modules $P$, $Q$ is equivalent to $P$ being isomorphic to a pure submodule of $Q$, whilst Cuntz equivalence amounts to isomorphism.~We also prove order-cancellation of finitely generated projective modules (and, more generally, pairs of modules admitting certain complements) from direct sums with countably generated projective modules.
\end{abstract}

\maketitle




\section{Introduction}
The category of projective right modules on a
(unital) ring $R$ encodes a great deal of information about $R$, and there is a long tradition of using them when studying classes of rings. In the finitely generated case, by taking isomorphism classes, one can construct the commutative monoid $\V(R)$, where the operation is induced from the direct sum of modules. (Note that the Grothendieck group of $\V(R)$ is $\K_0(R)$.) If, instead, we focus on the class of countably generated projective modules, then we can consider the monoid $\V^*(R)$. In this case, one potentially gains access to significantly more information since, by the celebrated theorem of Kaplansky, any projective module is isomorphic to a direct sum of countably generated projective modules. Note also that, in this more general setting, the monoid $\V^*(R)$ contains an infinite element,  simply because $R^{(\mathbb N)}\oplus P\cong R^{(\mathbb N)}$ for any countably generated projective module $P$. Therefore, its Grothendieck group vanishes. This highlights the need to operate at the monoid level, admittedly losing some structure.
The monoid $\V^*(R)$ has been under intense scrutiny in various works; see, for example, \cite{HerPri2010}, \cite{HerPri2014}, \cite{HerPri2014JAlg}, \cite{HerPriWie2023}, and, for more general monoids, \cite{nazsme24monoid}. We remark that our arguments are almost exclusively concerned with order structure as described in the following rather than with the direct sum decompositions of the modules involved.

Both monoids $\V(R)$ and $\V^*(R)$ are usually ordered using the so-called algebraic preorder, given by complements.~Albeit this is natural -- the preorder on $\V(R)$ induces a natural preorder on $\K_0(R)$ - it might not be convenient as one can lose some of the fine relationships among projective modules. This is particularly relevant in the countably generated case and led the authors of \cite{AntAraBosPerVil25} to introduce a more general preorder and to call the resulting semigroup the Cuntz semigroup of the ring, denoted by $\CP(R)$ (or also by $\mathrm{S}(R)$, depending on the picture adopted). We briefly recall the preorder: for a unital ring $R$ and countably generated projective modules $P$, $Q$, we say that $P$ is Cuntz subequivalent to $Q$, and write $P\precsim Q$, if for any finitely generated submodule $X$ of $P$, one can factor the inclusion $\iota_X$ of $X$ in $P$ through convenient module morphisms $\phi\colon X\to Q$ and $\psi\colon Q\to P$. One then makes $\CP(R)$ a partially ordered semigroup by declaring countably generated projective modules $P$ and $Q$ to be Cuntz equivalent, in symbols $P\sim Q$, whenever $P\precsim Q$ and $Q\precsim P$; see also~\autoref{definicio cp(r) unital} below for more details.~Observe that if $P$ is finitely generated, then the relation $\precsim$ records the fact that $P$ itself is isomorphic to a direct summand of $Q$.~The definition given was inspired by the preorder defined on the isomorphism classes of countably generated Hilbert modules over a C*-algebra $A$, which turns out to agree with the Cuntz semigroup $\Cu(A)$ of $A$; see \cite{CowEllIva08CuInv}.~This is an object that plays a key role in the understanding of the internal structure of the algebra and, for the class of classifiable algebras, is functorially equivalent to the Elliott invariant; see \cite{AntDadPerSan14RecoverElliott} and \cite{AntPerRobThi22}.~Therefore, one expects this semigroup to encode a similar amount of information for well behaved classes of rings that help elucidate their internal structure. It was already shown that, in full generality, $\CP(R)$ captures the lattice of quasi-pure ideals of $R$, see \cite[Theorem 4.10]{AntAraBosPerVil26}.
In addition, the assignment $R\mapsto \CP(R)$ defines a continuous functor for the large class of the so-called left normal rings, see \cite[Theorem 8.2]{AntAraBosPerVil26}.

In this paper, we continue to pursue this program by analyzing the Cuntz semigroup of rings with stable rank one.~Briefly, a unital ring $R$ has stable rank one if, whenever $xa+b=1$ in $R$, there is $y\in R$ such that $a+yb$ is left invertible. It is well known that this is a symmetric condition, and it can be also defined for non-unital rings; see \autoref{def stable rank one} for more details.~Rings with stable rank one enjoy good permanence properties; namely, the condition is stable under matrix formation, corners, passage to inductive limits or surjective inverse limits, among others (a nice survey can be found in \cite{Vaserstein84StRangeCond}). This class of rings is pleasantly large and has been considered in many cases in the literature.~For example, it contains all semilocal rings \cite[Theorem 4.4]{facchini98}, all unit-regular rings \cite[Proposition 4.12]{goodearlvnrr}, group algebras over locally finite groups, strongly $\pi$-regular rings (\cite{Ara96Strongly}), Leavitt path algebras over acyclic graphs (see, e.g.~\cite[4.4]{AbrAraSil17Leavitt}), and many C*-algebras, such as AF-algebras, irrational rotation algebras, the Jiang-Su algebra, finite von Neumann algebras, and many others.~A prominent commutative example consists of the ring of continuous complex-valued functions over a compact Hausdorff space of dimension at most 1 \cite{vaserstein1971}. In fact, these are precisely the commutative unital C*-algebras that have stable rank one.

In the case of C*-algebras with stable rank one, it was proved in \cite{CowEllIva08CuInv} that the Cuntz subequivalence defined for countably generated Hilbert modules agrees with the relation of subisomorphism (i.e., being isomorphic to a submodule) and that Cuntz equivalence corresponds to isomorphism. The different nature of the Cuntz semigroups just mentioned (algebraic and analytic) was explored in \cite[Section 7]{AntAraBosPerVil25}. More concretely, it was shown that, for a C*-algebra $A$ and suitably interpreted, $\Cu(A)$ is a retract of $\CP(A)$. It remains an open problem to decide whether these semigroups agree in general. Taking into account these observations, it is pertinent to ask, for a ring with stable rank one, what the exact relationship is between the comparison relation $\precsim$ (respectively, $\sim$) and subisomorphism (respectively, isomorphism). This is the content of our first main result.

\begin{thmintro}[cf. \ref{main theorem}]
\label{thmintroA}
Let $R$ be an arbitrary ring with stable rank one, and let $P$, $Q$ be countably generated projective right $R$-modules. Then
\begin{enumerate}[label=\rm{(\roman*)}]
    \item $P\precsim Q$ if and only if $P$ is isomorphic to a pure submodule of $Q$.
    \item $P\sim Q$ if and only if $P\cong Q$.
\end{enumerate}
\end{thmintro}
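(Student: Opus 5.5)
For (i), the easy direction is that a pure embedding gives Cuntz subequivalence; this should hold in any ring. Suppose $P\cong P'\subseteq Q$ with $P'$ pure, and let $X\subseteq P'$ be finitely generated. Since $P'$ is countably generated projective, $P'=\varinjlim$ of finitely generated free summands. Purity of $P'$ in $Q$ together with finite presentation of (a finitely generated projective containing) $X$ lets us lift: the map $X\to P'$ factors through a finitely generated free $F$ mapping to $P'$. We then use the standard characterization that $P'\subseteq Q$ pure with $P'$ projective means every map from a finitely presented module into $Q/P'$-compatible data splits locally. Concretely, for a finitely generated $X\subseteq P'$ there is $\psi\colon Q\to P'$ with $\psi|_X=\mathrm{id}$; this holds because $P'$ is a pure submodule and $P'$ is flat/Mittag-Leffler, so each finitely generated piece admits a local retraction. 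Taking $\phi$ to be the inclusion then gives $P\precsim Q$.

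The converse uses stable rank one in an essential way. Assume $P\precsim Q$ and write $P=\bigcup_n P_n$ with $P_n$ finitely generated projective direct summands, $P_n\subseteq P_{n+1}$. I would build inductively maps $\phi_n\colon P_n\to Q$ and $\psi_n\colon Q\to P$ with $\psi_n\phi_n=\iota_{P_n}$ and $\phi_{n+1}|_{P_n}=\phi_n$. The limit $\phi\colon P\to Q$ is then injective, and its image is pure: any finitely generated portion is locally split by some $\psi_n$, and local splitting of every finitely generated submodule implies purity. The key is the compatibility $\phi_{n+1}|_{P_n}=\phi_n$. Given a fresh factorization $\iota_{P_{n+1}}=\psi'\phi'$, we obtain two split embeddings $\phi_n$ and $\phi'|_{P_n}$ of $P_n$ into $Q$, each locally split. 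We need an automorphism-type correction of $Q$, or at least of $\phi'$, so that $\phi'$ restricted to $P_n$ becomes $\phi_n$. The tool here is cancellation together with the unit-lifting property of stable rank one: in $\mathrm{End}(F)$ for a suitable finitely generated free summand $F$ of $Q$ containing the relevant images (which has stable rank one as a corner of matrices over $R$), two idempotent decompositions $F=\phi_n(P_n)\oplus C\cong\phi'(P_n)\oplus C'$ have $C\cong C'$ by cancellation. This yields an automorphism $u$ of $F$, extended by the identity on a complement, with $u\phi'|_{P_n}=\phi_n$. Replacing $\phi'$ by $u\phi'$ and $\psi'$ by $\psi' u^{-1}$ completes the induction. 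The main obstacle is making the images land in a finitely generated direct summand $F$ of $Q$ with compatible splittings, since $\psi_n$ is defined on all of $Q$. For this I would use Kaplansky-style exhaustion of $Q$ by finitely generated free summands, and if the bare cancellation is insufficient, I would fall back on the ring-theoretic form of stable rank one ($xa+b=1\Rightarrow a+yb$ invertible) in $\mathrm{End}(F)$.

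For (ii), $P\cong Q$ trivially gives $P\sim Q$. Conversely, by (i) we have pure embeddings both ways, and I would run a back-and-forth version of the above induction. Enumerate exhaustions $P_n$ and $Q_n$ by finitely generated summands, and alternately extend $\phi\colon P\to Q$ and $\psi\colon Q\to P$ so that $\psi\phi$ is the identity on $P_n$ and $\phi\psi$ is the identity on $Q_n$. Each step again corrects by automorphisms supplied by cancellation in stable rank one. In the limit, $\phi$ and $\psi$ are mutually inverse, so $P\cong Q$.
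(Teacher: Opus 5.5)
\textbf{There is a genuine gap in the hard direction of (i), and it carries over to (ii).}

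You write $P=\bigcup_n P_n$ with each $P_n$ a finitely generated projective direct summand of $P$. Later you confine the images to a finitely generated free summand $F$ of $Q$ by ``Kaplansky-style exhaustion''. Such exhaustions do not exist in general. If $P_n\subseteq P_{n+1}$ are finitely generated direct summands of $P$ with union $P$, the modular law makes $P_n$ a summand of $P_{n+1}$, so $P$ would be a direct sum of finitely generated projectives. Kaplansky's theorem only provides countably generated summands.

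A concrete counterexample: $R=C([0,1])$ has stable rank one. The ideal $I$ of functions vanishing on a neighbourhood of $0$ is a countably generated projective module (the union of principal ideals $g_nR$ with $g_{n+1}g_n=g_n$). Any finitely generated submodule of $I$ vanishes on a common neighbourhood of $0$. So a finitely generated projective one has rank $0$ there, hence everywhere, since $R$ has no nontrivial idempotents. Thus $I$ has no nonzero finitely generated projective summand.

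In such cases the finitely generated pieces of $P$ are not projective, and $\phi_n(P_n)$ is not a direct summand of anything. There is also no finitely generated summand $F$ of $Q$ to work in. So neither cancellation nor the relation $xa+b=1$ in $\mathrm{End}(F)$ has anything to act on. Your scheme does work when $P$ and $Q$ are direct sums of finitely generated projectives, but that is precisely the easy case the theorem is meant to go beyond.

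\textbf{The missing idea is to replace direct summands by compact containment.} Write $P=\bigcup P_n$ with $P_n\cc P_{n+1}$, meaning some $\theta\in K(P,P_{n+1})$ restricts to the identity on $P_n$. Here $K(P)$ is the ideal of endomorphisms factoring through some $R^m$.

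The paper proves that the non-unital ring $K(Q)\cong e\,\mathrm{RFM}(R)\,e$ has stable rank one. One must work inside this ideal rather than in $\mathrm{End}(Q)$, which need not have stable rank one: e.g.\ $\mathrm{End}(R^{(\mathbb{N})})\cong\mathrm{FCM}(R)$ is not directly finite. The paper then applies the non-unital stable rank one condition to
$a=\varphi\theta+1-\theta$, $x=\varphi^{-1}\theta'+1-\theta'$, $b=1-xa$.
This yields an automorphism $v\in 1+K(Q)$ that agrees with a given isomorphism $\varphi\colon A\to B$ only on the smaller piece $A_1\cc A$ (\autoref{lema aprox}).

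Consequently one only gets $\phi_{n+1}|_{P_{n-1}}=\phi_n|_{P_{n-1}}$, not exact compatibility on $P_n$. This index drop still suffices to define $\phi$ on $P$, and a $4n$-indexed version runs the back-and-forth in (ii).

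\textbf{A smaller point on the easy direction.} Local splitting of $P'\subseteq Q$ comes from $Q$ being projective (Chase, Fieldhouse), not from $P'$ being flat or Mittag-Leffler. For example, $\mathbb{Z}$ is pure but not a summand in any non-split extension of $\mathbb{Q}$ by $\mathbb{Z}$.
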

Pureness will be discussed in more detail in \autoref{pgr:pure}. Suffice to say here that since the modules involved are projective, in this context it is equivalent to the following: $P$ is a pure submodule of $Q$ if, for any finitely generated submodule $X$ of $P$, there is $\sigma\colon Q\to P$ such that $\sigma_{|X}$ is the inclusion of $X$ in $P$.

Our result parallels very satisfactorily that of C*-algebras of stable rank one and applies to a much larger class of rings. It is also important to note that we do not have all the analytic machinery used in \cite{CowEllIva08CuInv} at our disposal. Therefore, the proof of the result necessarily breaks away from its C*-counterpart, although in spirit it still preserves a flavor of successive algebraic approximations. Our result immediately yields the following
\begin{corintro}
For any ring $R$ with stable rank one, the natural map $\V^*(R)\to\CP(R)$ is an isomorphism of commutative monoids. 
\end{corintro}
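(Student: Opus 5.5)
The corollary says the natural map $\V^*(R)\to\CP(R)$, $[P]\mapsto[P]$, is an isomorphism of commutative monoids. I would deduce it directly from \autoref{thmintroA}.

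First, the map is well defined. If $P\cong Q$, then $P\precsim Q$ and $Q\precsim P$: given $X\subseteq P$ finitely generated, take $\phi$ to be the isomorphism restricted to $X$ and $\psi$ its inverse. Hence $P\sim Q$.

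Second, the map is a monoid homomorphism. Addition in $\CP(R)$ is induced by direct sums, as is addition in $\V^*(R)$, and the zero module goes to zero. So $[P\oplus Q]\mapsto[P]+[Q]$, which is immediate from the definitions recalled in \autoref{definicio cp(r) unital}.

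Third, the map is surjective. Every element of $\CP(R)$ is by definition the class of a countably generated projective module, and that class is the image of its isomorphism class.

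Fourth, the map is injective. If $[P]=[Q]$ in $\CP(R)$, then $P\sim Q$, and part (ii) of \autoref{thmintroA} gives $P\cong Q$.

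Therefore the map is a bijective monoid homomorphism, hence an isomorphism.

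If the statement is also meant to cover order, part (i) of \autoref{thmintroA} identifies the order of $\CP(R)$ with the relation "isomorphic to a pure submodule". The algebraic order on $\V^*(R)$ corresponds to "isomorphic to a direct summand". These orders need not coincide, so the isomorphism should be asserted only at the monoid level, which is what the corollary states.

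There is no real obstacle here. All the content lies in \autoref{thmintroA}(ii), and the only care needed is to check that the semigroup operations match.
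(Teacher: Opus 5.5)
Your proposal is correct and matches the paper, which obtains the corollary as an immediate consequence of \autoref{thmintroA}: surjectivity and additivity are formal, and injectivity is exactly part (ii), i.e.\ \autoref{main theorem}(iii). The same argument works verbatim for non-unital $R$ if you read $\V^*(R)$ as isomorphism classes of modules in $\mathcal{CP}(R)$ as in \autoref{def cp(r) no unital}, since \autoref{main theorem} is proved for arbitrary rings.
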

Since by \cite[Lemma~4.3, Theorem~4.10]{AntAraBosPerVil25}, the Cuntz semigroup $\CP(R)$ is closed under suprema of increasing sequences, the corollary above shows a way to interpret countable direct sums in $\V^*(R)$; see \cite{nazsme24monoid}. 

One of the classical results in the literature establishes that if $R$ is a unital ring with stable rank one, then finitely generated projective modules cancel from direct sums; see \cite[Theorem 2]{Evans1973}. A similar result for stable rank one C*-algebras was proved in \cite[Proposition 4.2]{RorWin10ZRevisited}, meaning that one can cancel finitely generated projective modules from inequalities in direct sums with general countably generated Hilbert modules. With a completely different approach, we show the following:

\begin{thmintro}[cf.~\ref{cancelacio fg}, \ref{cancelacio cp(r) fg}]
\label{thmintro B}
Let $R$ be a unital ring of stable rank one, let $P$, $Q$ be countably generated projective $R$-modules, and let $T$ be a finitely generated projective $R$-module. If $P\oplus T\precsim Q\oplus T$, then $P\precsim Q$. 
\end{thmintro}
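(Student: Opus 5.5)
We reduce to $T=R^n$ and then to $T=R$, and for $T=R$ we use the stable rank one hypothesis through Evans-type unit lifting. Since $T$ is finitely generated projective, there is $T'$ with $T\oplus T'\cong R^n$. The relation $\precsim$ is compatible with direct sums (adding the identity on $T'$ to the factorizations), so $P\oplus R^n\precsim Q\oplus R^n$. By induction on $n$ it therefore suffices to treat $T=R$: assuming $P\oplus R\precsim Q\oplus R$, we must show $P\precsim Q$.

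Fix a finitely generated submodule $X\subseteq P$. Apply the hypothesis to the finitely generated submodule $X\oplus R\subseteq P\oplus R$. This gives $\phi\colon X\oplus R\to Q\oplus R$ and $\psi\colon Q\oplus R\to P\oplus R$ with $\psi\phi=\iota_{X\oplus R}$. Write both maps as $2\times 2$ matrices:
\[
\phi=\begin{pmatrix}\phi_{11}&\phi_{12}\\ \phi_{21}&a\end{pmatrix},\qquad \psi=\begin{pmatrix}\psi_{11}&\psi_{12}\\ \psi_{21}&b\end{pmatrix},
\]
where $a,b\in R$ act by left multiplication on $R$. The $(R,R)$-entry of $\psi\phi=\iota$ gives
\[
\psi_{21}\phi_{12}+ba=1,
\]
with $\psi_{21}\phi_{12}\in\mathrm{End}(R)=R$.

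The plan is to modify $\phi$ and $\psi$ by elementary automorphisms of $Q\oplus R$ so that the $(R,R)$-entry becomes a unit. Note that $\psi_{21}\phi_{12}=\sum_i y_ix_i$, where $x=\phi_{12}(1)\in Q$ and $\psi_{21}$ is evaluated on it. Since $\phi_{12}(R)$ lies in a finitely generated free summand of $Q$, up to a summand $R^m$, this reads as a unimodular relation $\sum y_ix_i+ba=1$. Stable rank one of $R$, in the form of Vaserstein's reduction for unimodular columns, gives $c_i$ such that $a+\sum c_ix_i$ is left invertible; by stable rank one it is then a unit. Composing $\phi$ with the elementary automorphism $E=\begin{pmatrix}1&0\\ \gamma&1\end{pmatrix}$ of $Q\oplus R$, with $\gamma\colon Q\to R$ built from the $c_i$ (extended by zero on a complement of $R^m$), and $\psi$ with $E^{-1}$, we may assume $a$ is a unit.

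Once $a$ is invertible, use the automorphism $\begin{pmatrix}1&-\phi_{12}a^{-1}\\0&1\end{pmatrix}$ of $Q\oplus R$ to clear the $(1,2)$-entry of $\phi$. Then clear the $(2,1)$-entry using the automorphism $\begin{pmatrix}1&0\\ -a^{-1}\phi_{21}&1\end{pmatrix}$ of $X\oplus R$, which is an automorphism of the source, and compensate on $\psi$. This yields a diagonal $\phi'=\mathrm{diag}(\phi'_{11},a)$ and a matching $\psi'$. The identity $\psi'\phi'=$ (the compensated identity of $X\oplus R$, which restricts to the inclusion on $X$ after projecting) then gives $\psi'_{11}\phi'_{11}=\iota_X$ up to a correction term $\psi'_{12}\,a\,(\cdots)$. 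This term vanishes because the compensating automorphism on $X\oplus R$ is lower triangular with identity on the $X$-block. One should bookkeep this carefully, possibly precomposing with the inverse source automorphism restricted to $X$, which is the identity. The result is $\phi'_{11}\colon X\to Q$ and $\psi'_{11}\colon Q\to P$ factoring $\iota_X$, so $P\precsim Q$.

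The main obstacle is the stable-rank step. The relation lives in $Q$, which is only countably generated, so the reduction to a finite unimodular row must be done correctly. I would first replace $Q$ by a finitely generated free summand containing the image $\phi_{12}(R)$, and check that extending $\gamma$ by zero keeps $\psi E^{-1}$ well defined. The other delicate point is the matrix bookkeeping in the final step, which must yield exactly $\iota_X$ in the $(1,1)$-corner.
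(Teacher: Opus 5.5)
Your strategy is essentially the paper's. You use stable rank one to replace $\phi$ by $\begin{pmatrix}1&0\\ \gamma&1\end{pmatrix}\phi$ and $\psi$ by $\psi\begin{pmatrix}1&0\\ -\gamma&1\end{pmatrix}$, so that the corner entry of $\phi$ becomes a unit, and then extract a factorization of $\iota_X$ through $Q$. The paper does this directly for $T$, using $\mathrm{sr}(\mathrm{End}_R(T))=1$ ($\mathrm{End}_R(T)$ is a corner of a matrix ring over $R$), and this yields the more general \autoref{cancelacio fg}. Your preliminary reduction to $T=R$, via additivity of $\precsim$ and induction, is correct. It spares you Vaserstein's permanence results for matrices and corners, at the cost of covering only projective $T$.

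However, the step you call the main obstacle is not one, and your proposed way around it is wrong as stated. The image $\phi_{12}(R)$ need not lie in any free summand of $Q$: for $R=k\times k$ (stable rank one) and $Q=(1,0)R$, $Q$ has no nonzero free summand. So you cannot ``replace $Q$ by a finitely generated free summand''. No coordinates are needed. Under $\mathrm{End}_R(R)\cong R$ the relation reads $s+ba=1$ with $s=\psi_{21}(\phi_{12}(1))\in R$. The definition of stable rank one then directly gives $y\in R$ with $a+ys$ a unit. The map $\gamma:=y\,\psi_{21}\colon Q\to R$ is defined on all of $Q$ and satisfies $\gamma\phi_{12}+a=a+ys$. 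This $\gamma$ is the paper's $y\psi_3$, and your $c_i$ are just $yy_i$.

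The final bookkeeping also needs no automorphism of $X\oplus R$. Such an automorphism could not be ``compensated on $\psi$'' anyway, since it does not act on $P\oplus R$. Once $a$ is a unit, the $(1,2)$-entry of $\psi\phi=\iota_X\oplus\mathrm{id}_R$ gives $\psi_{12}=-\psi_{11}\phi_{12}a^{-1}$. Substituting into the $(1,1)$-entry yields $\psi_{11}(\phi_{11}-\phi_{12}a^{-1}\phi_{21})=\iota_X$, exactly as in the paper.
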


We also explore more general forms of cancellation that do not need finite generation as discussed above. In the realm of C*-algebras, results of this type have been established in \cite[Theorem 4.3]{RorWin10ZRevisited}; see also \cite[Lemma 2.5]{AntPerRobThi22} for equivalent formulations in the context of abstract Cuntz semigroups. These results need the existence of certain complements, always present in C*-algebras (the so-called axiom of almost algebraic order), but this is not guaranteed. Thus, we need to develop an algebraic analog of a complement. This is done in \autoref{dfn:complement}. Loosely speaking, if $D\subseteq C\subseteq R^n$ are countably generated projective modules, then having a complement of $D\subseteq C$ ensures the existence of a countably generated projective module $E$ such that $D\oplus E\precsim R^n\precsim C\oplus E$. In general rings, complements need not exist, but we show that they do in some situations; see \autoref{prp existencia complement} and \autoref{cor:complement}. Using this, we can show the following
\begin{thmintro}[cf.~\ref{complement implica cancelacio}]
\label{thmintroC}
Let $R$ be a unital ring with stable rank one. 
Let $P,Q,C,D$ be countably generated projective modules such that $P\oplus C\precsim Q\oplus D$, with $D\subseteq C\subseteq R^n$ for some $n$ having a complement in $R^n$. Then $P\precsim Q$.
\end{thmintro}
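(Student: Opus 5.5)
The plan is to reduce to cancellation of a finitely generated projective module, that is, to \autoref{thmintro B} (cf.~\ref{cancelacio fg}, \ref{cancelacio cp(r) fg}), by using the complement of $D\subseteq C$ in $R^n$ to absorb $C$ and $D$ into copies of the free module $R^n$.

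\textbf{Step 1: choose the complement.} Since $D\subseteq C\subseteq R^n$ has a complement in $R^n$ in the sense of \autoref{dfn:complement}, I will use the consequence stated after that definition: there is a countably generated projective module $E$ with
\[
D\oplus E\precsim R^n\precsim C\oplus E .
\]

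\textbf{Step 2: chain the inequalities.} I will use that $\precsim$ is compatible with direct sums and transitive, since $\CP(R)$ is a preordered monoid under $\oplus$ by \cite{AntAraBosPerVil25}. Adding $P$ to the right-hand inequality, then adding $E$ to the hypothesis $P\oplus C\precsim Q\oplus D$, and finally adding $Q$ to the left-hand inequality gives
\[
P\oplus R^n\precsim P\oplus C\oplus E\precsim Q\oplus D\oplus E\precsim Q\oplus R^n .
\]

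\textbf{Step 3: cancel.} Because $R^n$ is finitely generated projective and $R$ has stable rank one, \autoref{thmintro B} applies with $T=R^n$. It cancels $R^n$ from $P\oplus R^n\precsim Q\oplus R^n$ and yields $P\precsim Q$.

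\textbf{Main obstacle.} I expect the difficulty to lie in Step~1, namely in extracting the two inequalities with a single module $E$ from the formal definition of a complement. The definition in \autoref{dfn:complement} is presumably phrased at the level of finitely generated submodules and approximate factorizations, not as two honest Cuntz inequalities. If the definition only yields approximations, I would instead run Step~2 locally. Given a finitely generated submodule $X\subseteq P$, I would track the factorizations through the chain by composing the maps supplied by the definition, adjusting each factorization to be compatible with the next, to get $X\oplus R^n$ factoring through $Q\oplus R^n$. I would then apply the local form of cancellation, \ref{cancelacio fg}, to that finitely generated data.
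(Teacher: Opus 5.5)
Your proposal is correct and is essentially the paper's proof. The paper takes a complement $E$ and invokes \autoref{rmk:remarks-for-complement}(i) to get $D\oplus E\precsim R^n\precsim C\oplus E$. It then forms exactly your chain $P\oplus R^n\precsim P\oplus C\oplus E\precsim Q\oplus D\oplus E\precsim Q\oplus R^n$ and cancels $R^n$ with \autoref{cancelacio fg}.

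The obstacle you anticipate in Step~1 does not arise, because those two inequalities are honest Cuntz inequalities. The first holds because $D+E=D\oplus E$ is pure in $R^n$ (\autoref{impl right to left}), and the second because the surjection $C\oplus E\to R^n$, $(c,e)\mapsto c+e$, splits. Your local fallback is therefore unnecessary.
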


The paper is organized as follows. In Section 2 we collect the notions and the necessary material needed for the rest of the results. In Section 3 we introduce the ideal $K(P)$ for any right module $P$ and show that $K(P)$ has stable rank one whenever $R$ also has stable rank one and $P$ is a countably generated projective module. In Section 4 we establish our first main result \autoref{main theorem}, which subsumes Theorem~\ref{thmintroA}. Section 5 is devoted to cancellation questions, and in particular we prove a somewhat more general version of Theorem~\ref{thmintro B}, as well as Theorem~\ref{thmintroC} along with some consequences.

We work as much as possible with arbitrary rings, that is, rings that do not necessarily have a unit, and pass to their Dorroh unitization when needed. For some of our results, and to keep technical issues at a minimum, we assume the existence of a unit.~All monoids in this paper are commutative, written additively, and we denote by $0$ their neutral element.

\section{Preliminaries}

In this section, we recall the definition of the main object in this paper. We also collect some notations used throughout the paper.

\begin{pgr}[The semigroup $\CP(R)$ for a unital ring $R$]\label{definicio cp(r) unital} Let $R$ be a unital ring and denote by $\mathcal{CP}(R)$ the class of all countably generated projective right $R$-modules. Given $P, Q \in \mathcal{CP}(R)$, we say that $P$ is \emph{Cuntz subequivalent to} $Q$, and write $P \precsim Q$ if, and only if, for every finitely generated submodule $X$ of $P$, there exists a factorization of the inclusion of $X$ in $P$ by $Q$, that is, there are module homomorphisms $\phi\colon X \rightarrow Q$ and $\psi \colon Q \rightarrow P$ such that $\psi \circ \phi = \mathrm{id}_X$,
\[
\begin{tikzcd}
X \arrow[r,"\phi"] \arrow[swap, rr, bend right, "\mathrm{id}_X"] & Q \arrow[r, "\psi"] & P 
\end{tikzcd}.
\]
Note that we are making the standard abuse of notation of identifying the identity map $\mathrm{id}_X$ with the natural inclusion $\iota \colon X \rightarrow P$.

We say that $P$ and $Q$ are \emph{Cuntz equivalent}, in symbols $P\sim Q$, provided $P \precsim Q$ and $Q \precsim P$. Define the partially ordered set $\mathrm{CP}(R)$ to be $\mathrm{CP}(R) \coloneqq \mathcal{CP}(R)/{\sim}$. Given an element $P \in \mathcal{CP}(R)$ denote its equivalence class by $[P]$. For modules $P, Q \in \mathcal{CP}(R)$ define $[P] + [Q] = [P \oplus Q]$. It was proved in \cite[Lemma 4.5]{AntAraBosPerVil25} that $\mathrm{CP}(R)$, equipped with the sum defined above and the order induced by $\precsim$, is a positively ordered commutative monoid. We call $\mathrm{CP}(R)$ the \emph{Cuntz semigroup of the ring} $R$. 
\end{pgr}
\begin{pgr}[Auxiliary relations]
Following \cite[Definition I-1.11]{GieHof+03Domains} or \cite[2.1.1]{AntPerThi14arX:TensorProdCu}, an \emph{auxiliary relation} on a partially ordered set $(P, \leq)$ is a binary relation on $P$ formally stronger than $\leq$ (i.e., $x \prec y \implies x \leq y$ for all $x, y \in P$) such that, for any $x',x,y,y' \in P$ with $x' \leq x \prec y \leq y'$, one has $x' \prec y'$. If, further, $P$ is also a monoid, the auxiliary relation is termed \emph{additive} if $0 \prec x$ for any $x \in P$ and if, whenever $x',x,y,y' \in P$ satisfy $x \prec y$ and $x' \prec y'$, we have $x + x' \prec y + y'$.

One of the most common examples of an auxiliary relation consists of the way-below relation. Assume that $P$ is closed under suprema of increasing sequences. If $x,y\in P$, we write $x\ll y$ if, whenever $(y_n)$ is an increasing sequence in $P$ such that $y\leq \sup y_n$, then there is $k$ for which $x\leq y_k$.  

We now define another relation $\prec$ on $\mathcal{CP}(R)$, stronger than $\precsim$, as follows: given $P, Q \in \mathcal{CP}(R)$, we write $P \prec Q$ if and only if there exists a finitely generated submodule $Y$ of $Q$ such that for every finitely generated submodule $X$ of $P$, there are module homomorphisms $\phi\colon X \rightarrow Q$ and $\psi \colon Q \rightarrow P$ such that $\psi \circ \phi = \mathrm{id}_X$, and moreover $\phi(X) \subseteq Y$. When we wish to emphasize the choice of the submodule $Y$, we write $P \prec_Y Q$ and say that $Y$ \emph{witnesses the relation} $P \prec Q$. By a slight abuse of notation, we write $\prec$ also for the relation on $\mathrm{CP}(R)$ induced by the relation $\prec$ on $\mathcal{CP}(R)$. It is immediate to verify that the relation $\prec$ on $\mathrm{CP}(R)$ is an additive auxiliary relation on $\mathrm{CP}(R)$ with the usual order.   

There is an alternative construction of the Cuntz semigroup, denoted by $\mathrm{S}(R)$, given in terms of certain equivalence classes of sequences of matrices over $R$; see \cite[4.1]{AntAraBosPerVil25}. Using \cite[Theorem 4.10]{AntAraBosPerVil25}, it can be shown that the relation $\prec$ on $\CP(R)$ agrees with the relation also denoted $\prec$ on $\mathrm{S}(R)$ given in \cite[Remark 6.4]{AntAraBosPerVil25}. It was shown in \cite[Lemma 4.3]{AntAraBosPerVil25} that $\CP(R)$ contains the supremum of any increasing sequence.~The nature of the relation $\prec$ on $\CP(R)$ makes it reasonable to relate it to the way-below relation. We know that $\prec$ is stronger than $\ll$, and they agree for stably left normal rings; see \cite[Theorem 7.8]{AntAraBosPerVil26} and the comments in \autoref{pgr:normal}.
\end{pgr}

\begin{pgr}[The semigroup $\mathrm{CP}(R)$ for a non-unital ring $R$] \label{def cp(r) no unital} Let $R$ be an arbitrary ring, denote by $R^{+}=\mathbb{Z}\oplus R$ the Dorroh extension of $R$, and view $R$ as a two-sided ideal of $R^+$. Denote by $\mathcal{CP}(R)$ the class of all countably generated unital projective right $R^{+}$-modules $P$ such that $P=PR$. Observe that this notation is consistent with the unital case. 
Denote by $\mathrm{FCM}(R)$ the ring of finite-column matrices over $R$. Given a module $P$ in $\mathcal{CP}(R)$, there exists an idempotent $e \in \mathrm{FCM}(R^+)$ such that $e((R^+)^{(\mathbb N)}) \cong P$.
Since $P = PR$, we necessarily have $e \in \mathrm{FCM}(R)$, and hence $e((R^+)^{(\mathbb N)}) = e(R^{(\mathbb N)})$. Conversely, every idempotent $e \in \mathrm{FCM}(R)$ gives rise to a countably generated projective $R^+$-module $P = e((R^+)^{(\mathbb N)}) = e(R^{(\mathbb N)})$, which satisfies $P = PR$. 


Observe that $\mathcal{CP}(R)$ is a subclass of $\mathcal{CP}(R^{+})$ closed under direct summands and countable direct sums. We equip $\mathcal{CP}(R)$ with the relation $\precsim$ inherited from $\mathcal{CP}(R^{+})$, and we write $\mathrm{CP}(R)$ for the monoid of equivalence classes of objects in $\mathcal{CP}(R)$ with respect to the relation $\precsim$. The inclusion $\mathcal{CP}(R) \hookrightarrow \mathcal{CP}(R^+)$ then induces an order-embedding $\mathrm{CP}(R) \hookrightarrow \mathrm{CP}(R^+)$, and $\mathrm{CP}(R)$ is a positively ordered monoid, called the \emph{Cuntz semigroup of the ring} $R$. Note that $\mathrm{CP}(R)$ agrees with the previously defined monoid whenever $R$ is a unital ring. 

We also define the relation $\prec$ on $\mathcal{CP}(R)$ by identifying it with the relation inherited from $\prec$ on $\mathcal{CP}(R^+)$, which induces an additive auxiliary relation on $\mathrm{CP}(R)$ with the usual order, as in the unital case. 
\end{pgr}

\section{The ideal $K(P)$}

In this section, we introduce $K(P)$ as an analog of the algebra of compact operators in a Hilbert space, but for a countably generated projective module $P$. We identify $K(P)$ in several fundamental cases and prove that, whenever the base ring has stable rank one, the ring $K(P)$ also has stable rank one. In analogy to \cite{CowEllIva08CuInv}, this result will be an important ingredient in the proof of the main theorem in the next section. 

\begin{pgr}[The group $K(P, Q)$] \label{def K(P)}
Let $R$ be a unital ring. Given a right $R$-module $P$, let $P^*=\mathrm{Hom}_R(P,R)$ denote its dual left $R$-module, and write $\langle \cdot, \cdot \rangle \colon P^* \times P \rightarrow R$ for the canonical evaluation pairing, given by $\langle \sigma, p \rangle = \sigma(p)$. Let $Q$ be another right $R$-module. For $q \in Q$ and $\sigma \in P^*$, we define the homomorphism $\theta_{q,\sigma} \colon P \rightarrow Q$ by $$\theta_{q,\sigma}(p) = q\langle\sigma,p\rangle.$$ 

We define $K(P,Q)$ to be the image of the group homomorphism
\begin{align*}
Q \otimes_R P^* & \longrightarrow \mathrm{Hom}_R(P, Q) \\
q \otimes \sigma & \longmapsto \theta_{q, \sigma}.
\end{align*}
Equivalently, $K(P,Q)$ is the subgroup of $\mathrm{Hom}_R(P, Q)$ generated by $\{ \theta_{q,\sigma} \colon \sigma \in P^*, q \in Q \}$, that is, 
\begin{equation*}
K(P, Q) = \left\{ \sum_{i = 1}^n \theta_{q_i, \sigma_i} \colon n \geq 1, q_i \in Q, \sigma_i \in P^* \right\}. 
\end{equation*}
Observe that the homomorphisms $\theta \in K(P, Q)$ are precisely the homomorphisms in $\mathrm{Hom}_R(P, Q)$ that factorize through $R^n$ for some $n \in \mathbb{N}$. That is, for $\theta \in \mathrm{Hom}_R(P, Q)$, we have that $\theta \in K(P, Q)$ if and only if there are module homomorphisms $\tau \colon R^n \rightarrow Q$ and $\rho \colon P \rightarrow R^n$ such that $\theta = \tau \circ \rho$.  

The following properties are easy to check: for $\theta_{q, \sigma} \in K(P,Q)$ and $\theta_{s, \tau} \in K(Q,S)$,
\begin{enumerate}[label=(\roman*)]
\item $\theta_{s, \tau}\circ\theta_{q,\sigma} = \theta_{s\tau(q), \sigma} \in K(P, S)$,
\item $f \circ \theta_{q,\sigma} = \theta_{f(q),\sigma} \in K(P, N)$ for $f \in \mathrm{Hom}_R(Q, N)$,
\item $\theta_{q,\sigma} \circ g = \theta_{q,\sigma\circ g} \in K(M, Q)$ for $g \in \mathrm{Hom}_R(M, P)$. 
\end{enumerate}

We define $K(P) \coloneqq K(P,P)$, a two-sided ideal of $\mathrm{End}_R(P)$ because of the above properties. 

Although $K(P)$ is defined for arbitrary modules, we shall work with countably generated projective modules.
\end{pgr}

Denote by $M_n(R)$ the ring of $n \times n$ matrices over $R$ and by $\mathrm{RFM}(R)$ the subring of matrices in $\mathrm{FCM}(R)$ that have a finite number of rows different from zero. 

Parts (i) and (ii) of the next proposition follow immediately from the fact that $K(P)$ consists of the endomorphisms of $P$ that factorize through $R^n$ for some $n$. Nevertheless, we give a direct argument. 

\begin{prp} \label{k(r^n)}
Let $R$ be a unital ring. Then we have:
\begin{enumerate}[label=\rm{(\roman*)}]
\item $K(R) \cong R$. 
\item $K(R^n) \cong M_n(R)$.
\item $K(R^{(\mathbb{N})}) \cong \mathrm{RFM}(R)$.
\end{enumerate}
\end{prp}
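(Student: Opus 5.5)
Throughout I identify endomorphisms of free modules with matrices acting on columns by left multiplication. Thus $\mathrm{End}_R(R)\cong R$ via $f\mapsto f(1)$, $\mathrm{End}_R(R^n)\cong M_n(R)$, and $\mathrm{End}_R(R^{(\mathbb N)})\cong \mathrm{FCM}(R)$. The last holds because an endomorphism is determined by the images of the basis vectors $e_j$, and each image has only finitely many nonzero coordinates. In each case the plan is to compute which matrices arise as finite sums of maps $\theta_{q,\sigma}$.

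For (i) and (ii), I describe the dual first. $(R^n)^*$ consists of the maps $\sigma(x)=\sum_i a_i x_i$ for a row $(a_1,\dots,a_n)\in R^n$. Given a column $q\in R^n$, the map $\theta_{q,\sigma}$ is $x\mapsto q(a x)$, so its matrix is the product $q a$ of a column and a row. Every such product lies in $K(R^n)$, and so do sums of them. Since the matrix units satisfy $E_{ij}=e_i e_j^{T}$, every matrix is a finite sum of such products. Hence $K(R^n)=\mathrm{End}_R(R^n)\cong M_n(R)$. Part (i) is the case $n=1$; it also follows directly from $\theta_{1,\mathrm{id}}=\mathrm{id}_R$ and the fact that $K(R)$ is an ideal.

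For (iii), I first identify $(R^{(\mathbb N)})^*$. A functional is determined by its values $a_j=\sigma(e_j)$, and any sequence $(a_j)_j\in R^{\mathbb N}$ defines a functional, since each $x\in R^{(\mathbb N)}$ has finite support. For $q\in R^{(\mathbb N)}$, the matrix of $\theta_{q,\sigma}$ is again $q a$. Here $q$ has finitely many nonzero entries and $a$ is an arbitrary infinite row. This matrix has finitely many nonzero rows, and each column $q a_j$ is finitely supported, so it lies in $\mathrm{RFM}(R)$. Sums of such matrices stay in $\mathrm{RFM}(R)$, so $K(R^{(\mathbb N)})\subseteq \mathrm{RFM}(R)$.

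For the reverse inclusion, take $M\in\mathrm{RFM}(R)$ whose nonzero rows lie among the indices $1,\dots,m$. Write $M=\sum_{i=1}^m e_i r_i$, where $r_i$ is the $i$-th row of $M$, viewed as a functional on $R^{(\mathbb N)}$. This expresses $M=\sum_i\theta_{e_i,r_i}\in K(R^{(\mathbb N)})$.

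Finally, I check that in each case the identification is a ring isomorphism. This holds because composition of endomorphisms corresponds to matrix multiplication, and the products involved are well defined in $\mathrm{FCM}(R)$. The only point needing care, and the one I expect to be the main obstacle, is the description of the dual of $R^{(\mathbb N)}$ as arbitrary rows. This is what makes row-finite, rather than finite, matrices appear, so I will verify it carefully.
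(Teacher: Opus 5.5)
Your proof is correct and takes the same route as the paper. Both compute the matrix of $\theta_{q,\sigma}$ as the column-times-row product $(q_i\sigma_j(1))$, note that it has finitely many nonzero rows when $P=R^{(\mathbb N)}$, and obtain surjectivity by writing a matrix as a sum of terms $e_i r_i$. Your one small addition is to treat $K$ as a subset of $\mathrm{End}$ under the standard identification with $M_n(R)$ or $\mathrm{FCM}(R)$. This makes well-definedness, injectivity and multiplicativity automatic, which the paper leaves as routine.
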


\begin{proof}
(i): The map sending $\sum_{i = 1}^n \theta_{p_i, \sigma_i}$ to $\sum_{i=1}^n p_i\sigma_i(1)$ provides the desired isomorphism. 

(ii): Here the map is given by the rule 
\begin{equation*}
\theta_{(p_1, \dots, p_n), (\sigma_1, \dots, \sigma_n)} \mapsto \begin{pmatrix}
    p_1\sigma_1(1) & \cdots & p_1\sigma_n(1) \\
    \vdots & \ddots & \vdots \\
    p_n\sigma_1(1) & \cdots & p_n\sigma_n(1) \\
\end{pmatrix}.
\end{equation*}
Proving that it is an isomorphism is routine. For example, to see surjectivity, observe that if 
\begin{equation*}
A = \begin{pmatrix}
    a_{11} & \cdots & a_{1n} \\
    \vdots & \ddots & \vdots \\
    a_{n1} & \cdots & a_{nn} \\
\end{pmatrix} \in M_n(R)
\end{equation*}
and $\theta_{ij} = \theta_{(0, \dots, 0, 1, 0, \dots, 0), (0, \dots, 0, \sigma_{ij}, 0, \dots, 0)}$, where $1$ is in the $i$-th position and $\sigma_{ij}$ is in the $j$-th position and is the map given by $\sigma_{ij}(1) = a_{ij}$, then  
\begin{equation*}
\sum_{1 \leq i, j \leq n} \theta_{ij} \mapsto A.
\end{equation*}
(iii): This is similar to (ii). Let $p = (p_i)_{i \in \mathbb{N}} \in R^{(\mathbb{N})}$ and $\sigma = (\sigma_i)_{i \in \mathbb{N}} \in (R^{(\mathbb{N})})^*$. Consider the map given by the rule 
\begin{equation*}
\theta_{p, \sigma} \mapsto \begin{pmatrix}
    p_1\sigma_1(1) & \cdots & p_1\sigma_i(1) & \cdots \\
    \vdots & \ddots & \vdots & \\
    p_i\sigma_1(1) & \cdots & p_i\sigma_i(1) & \cdots \\
    \vdots & & \vdots & \ddots \\
\end{pmatrix}.
\end{equation*}
Since $p_i = 0$ for all $i > N$ for some $N \in \mathbb{N}$, the matrix above is in $\mathrm{RFM}(R)$. A finite sum of matrices with finitely many non-zero rows again has only finitely many non-zero rows. Thus, every element of $K(R^{(\mathbb{N})})$ is mapped to a matrix in $\mathrm{RFM}(R)$. Seeing that the map is an isomorphism is again routine. Let us only check surjectivity. Let 
\begin{equation*}
A = \begin{pmatrix}
    a_{11} & \cdots & a_{1i} & \cdots \\
    \vdots & \ddots & \vdots &  \\
    a_{N1} & \cdots & a_{Ni} & \cdots \\
    0 & \cdots & 0 & \cdots \\
    \vdots &  & \vdots & \ddots \\
\end{pmatrix} \in \mathrm{RFM}(R).
\end{equation*}
Let $\theta_i \coloneqq \theta_{(0, \dots, 0, 1, 0, \dots, 0), (\sigma_{i1}, \sigma_{i2}, \dots, \sigma_{ij}, \dots)}$, where $1$ is in the $i$-th position and $\sigma_{ij}(1) = a_{ij}$ for all $1 \leq i \leq N$ and $j \geq 1$. Then 
\begin{equation*}
\sum_{1 \leq i \leq N} \theta_{i} \mapsto A.\qedhere
\end{equation*} 
\end{proof}

\begin{lma} \label{projectius i k(p)}
Let $R$ be a unital ring, $P$ be a projective right $R$-module and $Q$ any right $R$-module. Let $X$ be a finitely generated submodule of $P$ and $\phi \colon P \rightarrow Q$ be an $R$-module homomorphism. Then there exists $\theta \in K(P,Q)$ such that $\theta_{|X} = \phi_{|X}$. 
\end{lma}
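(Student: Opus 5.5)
The plan is to use the dual basis lemma for projective modules, which provides the finite-rank approximation of the identity needed here. Since $P$ is projective, there exist families $(p_i)_{i\in I}$ in $P$ and $(\sigma_i)_{i\in I}$ in $P^*$ such that for every $p\in P$ only finitely many $\langle\sigma_i,p\rangle$ are nonzero, and
\[
p=\sum_{i\in I} p_i\langle \sigma_i, p\rangle .
\]
Equivalently, $P$ is a direct summand of a free module $R^{(I)}$, with $\sigma_i$ the coordinate functionals composed with the inclusion and $p_i$ the images of the basis vectors under the projection.

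Let $X$ be generated by $x_1,\dots,x_m$. For each $x_k$ only finitely many $\langle\sigma_i,x_k\rangle$ are nonzero, so there is a finite subset $F\subseteq I$ such that $\langle\sigma_i,x_k\rangle=0$ for all $i\notin F$ and all $k$. Define
\[
\theta=\sum_{i\in F}\theta_{\phi(p_i),\sigma_i}\in K(P,Q).
\]
This lies in $K(P,Q)$ because it is a finite sum of the generators $\theta_{q,\sigma}$.

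It remains to check that $\theta$ agrees with $\phi$ on $X$. For each generator we have
\[
\theta(x_k)=\sum_{i\in F}\phi(p_i)\langle\sigma_i,x_k\rangle=\phi\Bigl(\sum_{i\in F}p_i\langle\sigma_i,x_k\rangle\Bigr)=\phi(x_k),
\]
using $R$-linearity of $\phi$ and the choice of $F$. Both $\theta$ and $\phi$ are $R$-linear, so they agree on the submodule generated by the $x_k$, namely $X$.

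There is no real obstacle beyond choosing the finite set $F$ uniformly for all generators, which is possible because $X$ is finitely generated. Alternatively, one can argue via the factorization characterization in \autoref{def K(P)}. Embed $P$ as a summand of $R^{(I)}$ with inclusion $\iota$ and projection $\pi$. Then $X$ lies in $\pi(R^F)$ for a finite $F$, and the map $\phi\circ\pi\circ e_F\circ\iota$, with $e_F$ the coordinate projection onto $R^F$, factors through $R^{|F|}$ and restricts to $\phi$ on $X$.
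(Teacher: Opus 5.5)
Your proof is correct and is essentially the paper's argument: take a dual basis, choose a finite index set capturing the nonzero coordinates $\langle\sigma_i,x_k\rangle$ of the generators of $X$, and set $\theta=\sum_{i\in F}\theta_{\phi(p_i),\sigma_i}$. One small slip in your alternative sketch: the condition you actually need is $\iota(X)\subseteq R^F$ rather than merely $X\subseteq\pi(R^F)$, though the same choice of $F$ gives it.
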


\begin{proof}
Let $\{ (a_i, \sigma_i) \colon a_i \in P, \sigma_i \in P^*, i \in I\}$ be a dual basis for $P$, and let $\{x_1, \dots, x_n \}$ be a set of generators of $X$. For each $1\leq j\leq n$, there is a finite number of indices $i$ such that $\sigma_i(x_j)$ is non-zero. Let $J$ be the collection of all such indices. Then $J$ is a finite set and for all $x \in X$, $\sum_{i \in J} a_i\sigma_i(x) = x$. Then, consider $\theta \coloneqq \sum_{i \in J} \theta_{\phi(a_i), \sigma_i} \in K(P, Q)$, which satisfies that for all $x \in X$,
\begin{equation*}
\theta(x) = \sum_{i \in J} \theta_{\phi(a_i), \sigma_i}(x) = \sum_{i \in J} \phi(a_i)\sigma_i(x) = \phi (\sum_{i \in J} a_i\sigma_i(x)) = \phi(x).\qedhere
\end{equation*}
\end{proof}

\begin{rmk} \label{submoduls de projectius i k(p)}
In particular, if $P$ is a projective right $R$-module and $X$ is a finitely generated submodule of $P$, we can consider the identity map $\mathrm{id}\colon P \rightarrow P$ and by~ \autoref{projectius i k(p)}, there is some $\theta \in K(P)$ such that $\theta_{|X}$ is the inclusion morphism $\iota \colon X \rightarrow{} P$. 

\end{rmk}

\begin{prp}
Let $P$ be a projective right $R$-module over a unital ring $R$. Then, $P$ is countably generated if and only if there exists a sequence $(\theta_n) \subseteq K(P)$ such that $\theta_{n+1}\theta_n = \theta_n$ for every $n$, and such that for every $ \rho \in K(P)$ there exists $n$ satisfying $\theta_n\rho = \rho$. 
\end{prp}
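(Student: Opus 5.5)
For the forward direction, I will use a generating set and \autoref{projectius i k(p)} inductively. Let $P$ be countably generated by $x_1,x_2,\dots$ and put $X_n=\sum_{i\le n}x_iR$, a finitely generated submodule. I build $\theta_n\in K(P)$ together with finitely generated submodules $Y_n$ with $X_n\subseteq Y_n$ and $\theta_n(P)\subseteq Y_n$, such that $\theta_{n}$ restricts to the identity on $Y_{n-1}$. Here $Y_0=0$ and $Y_{n-1}$ is fixed before $\theta_n$ is chosen.

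Each element of $K(P)$ is a finite sum $\sum_j\theta_{q_j,\sigma_j}$, so its image lies in the finitely generated submodule $\sum_j q_jR$. At stage $n$, apply \autoref{submoduls de projectius i k(p)} to the finitely generated submodule $X_n+Y_{n-1}$. This gives $\theta_n\in K(P)$ acting as the identity on $X_n+Y_{n-1}$. Then set $Y_n=X_n+Y_{n-1}+\operatorname{im}\theta_n$, which is finitely generated and contains $\operatorname{im}\theta_n$. Since $\theta_{n+1}$ is the identity on $Y_n\supseteq \operatorname{im}\theta_n$, we get $\theta_{n+1}\theta_n=\theta_n$.

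Now take $\rho=\sum_{j=1}^m\theta_{q_j,\sigma_j}\in K(P)$. Its image lies in $\sum_j q_jR$, and this module is contained in $X_n$ for $n$ large because the $x_i$ generate $P$. Since $\theta_n$ is the identity on $X_n$, it follows that $\theta_n\rho=\rho$.

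For the converse, suppose the sequence $(\theta_n)$ exists. Write $\theta_n=\sum_j\theta_{q^{(n)}_j,\sigma^{(n)}_j}$ as a finite sum, so $\operatorname{im}\theta_n$ lies in the finitely generated module $Z_n=\sum_j q^{(n)}_jR$. I claim that $P=\bigcup_n Z_n$, which makes $P$ countably generated by the countable set $\{q^{(n)}_j\}$.

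Take $p\in P$. Since $P$ is projective, \autoref{submoduls de projectius i k(p)} applied to $X=pR$ gives some $\rho\in K(P)$ with $\rho(p)=p$. By hypothesis there is $n$ with $\theta_n\rho=\rho$. Then $p=\rho(p)=\theta_n(\rho(p))\in\operatorname{im}\theta_n\subseteq Z_n$, which proves the claim.

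The relation $\theta_{n+1}\theta_n=\theta_n$ is only needed in the forward direction. I expect the main care point to be the forward bookkeeping: $\theta_{n+1}$ must fix the whole image of $\theta_n$ and not merely $X_n$. That is exactly why $\operatorname{im}\theta_n$ is absorbed into $Y_n$, and this works because images of elements of $K(P)$ are finitely generated.
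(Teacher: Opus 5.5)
Your argument is correct and is essentially the paper's: the converse is identical, and in the forward direction you build $\theta_{n+1}\theta_n=\theta_n$ into the inductive choice where the paper instead passes to a subsequence. You also check $\theta_n\rho=\rho$ explicitly, which the paper leaves implicit.

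One slip needs fixing: $\theta_n(P)$ itself need not be finitely generated. For example, take $\theta_{q,\sigma}$ on $R^{(\mathbb N)}$ with $\sigma(R^{(\mathbb N)})$ a non-finitely generated right ideal. So define $Y_n=X_n+Y_{n-1}+\sum_j q^{(n)}_jR$ instead. This is the finitely generated module you already observed contains the image, and with it the rest of your argument goes through unchanged.
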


\begin{proof}
Assume first that $P$ is countably generated, and let $\{x_1,x_2,\dots\}$ be a sequence of generators. By~\autoref{submoduls de projectius i k(p)}, for each $n$ there exists $\theta_n \in K(P)$ such that $\theta_n(x_i)=x_i$ for all $1 \le i \le n$. By possibly passing to a subsequence, we may assume that $\theta_{n+1}\theta_n=\theta_n$ for all $n$, as required. 

Conversely, suppose that there exists a sequence $(\theta_n) \subseteq K(P)$ satisfying the stated properties. Write each $\theta_n$ as a finite sum $\theta_n = \sum_i \theta_{p_{i,n},\sigma_{i,n}}$. Since $P$ is projective and the dual basis lemma holds for any projective module, we know that for every $x \in P$ there exists $\rho \in K(P)$ such that $\rho(x)=x$. By assumption, there exists $j$ such that $\theta_j \rho = \rho$, and hence
\[
x = \rho(x) = \theta_j \rho(x) \in \sum_i p_{i,j}R.
\]
Therefore, the set $\{p_{i,n}\}_{i,n}$ is a countable set of generators for $P$, and thus $P$ is countably generated.
\end{proof}

\begin{pgr}[Stable rank one for unital and non-unital rings] \label{def stable rank one}
Let $R$ be a unital ring. Recall that $R$ has \emph{stable rank one}, denoted $\mathrm{sr}(R)=1$, if for all $a,b \in R$ with $Ra + Rb = R$, there exists $y \in R$ such that $R(a + yb) = R$. Equivalently, if for all $a, b, x \in R$ with $xa + b = 1$, there exists $y \in R$ such that $a + yb$ is invertible in $R$. Note that this is the `left' version of the condition. There is a `right' version, and the two versions are in fact equivalent. 

An arbitrary ring $R$ is said to have stable rank one, again denoted $\mathrm{sr}(R) = 1$, if for any $a, b, x \in R^+$ with $a - 1 \in R$ and $xa + b = 1$, there exists $y \in R^+$ (equivalently, $y \in R$) such that $a + yb$ is invertible in $R^+$. This definition agrees with the previous one whenever $R$ is unital; see \cite[Theorem 3.4]{Vaserstein84StRangeCond}. 

Now suppose that $R$ is contained as a right ideal in a ring $R'$ with unit $1'$. Then $R$ has stable rank one if and only if, for any $a, b, x \in R'$ with $a - 1' \in R$ and $xa + b = 1'$, there exists $y \in R'$ (equivalently, $y \in R$) such that $a + yb$ is a unit in $R'$; see \cite[Lemma 3.5, Theorem 3.6]{Vaserstein84StRangeCond}.
\end{pgr}

The following proposition is immediate and well known, so we omit its proof.

\begin{prp} \label{rfm(r) two-sided fcm(r+)}
Let $R$ be an arbitrary ring. Then $\mathrm{RFM}(R)$ is a two-sided ideal of $\mathrm{FCM}(R^+)$.
\end{prp}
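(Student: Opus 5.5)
We need to show that $\mathrm{RFM}(R)$ is a two-sided ideal of $\mathrm{FCM}(R^+)$. The plan is a direct matrix computation, in three steps.

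\emph{Setup.} We regard $R$ as a two-sided ideal of $R^+=\mathbb{Z}\oplus R$. Then $\mathrm{FCM}(R)$ and $\mathrm{RFM}(R)$ sit inside $\mathrm{FCM}(R^+)$ as the matrices with all entries in $R$. Recall that a matrix in $\mathrm{FCM}$ has each column with only finitely many nonzero entries. Consequently the product $(AB)_{ij}=\sum_k a_{ik}b_{kj}$ is a finite sum for each $(i,j)$, since only finitely many $b_{kj}$ are nonzero. The product again lies in $\mathrm{FCM}$: column $j$ of $AB$ is a finite $R^+$-combination of columns of $A$, each of which has finite support.

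\emph{Additive subgroup.} $\mathrm{RFM}(R)$ is closed under differences, because the union of two finite sets of nonzero rows is finite.

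\emph{Left ideal.} Take $A\in \mathrm{FCM}(R^+)$ and $B\in\mathrm{RFM}(R)$, and suppose the rows of $B$ vanish outside a finite set $F$. Then
\[
(AB)_{ij}=\sum_{k\in F}a_{ik}b_{kj}.
\]
Each entry lies in $R$, since $R$ is an ideal of $R^+$. Row $i$ of $AB$ can be nonzero only if $a_{ik}\neq 0$ for some $k\in F$. Each of the finitely many columns $k\in F$ of $A$ has finite support, so only finitely many rows of $AB$ are nonzero. Hence $AB\in\mathrm{RFM}(R)$.

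\emph{Right ideal.} Now take $B\in\mathrm{RFM}(R)$ and $A\in\mathrm{FCM}(R^+)$. The entries of $BA$ lie in $R$, again because $R$ is an ideal. Row $i$ of $BA$ is zero whenever row $i$ of $B$ is zero, so $BA$ has only finitely many nonzero rows. Finally, $BA\in \mathrm{FCM}$ by the setup. Hence $BA\in\mathrm{RFM}(R)$.

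The only point needing care is the left-multiplication step. There the finite-column hypothesis on $A$ is exactly what ensures that finitely many nonzero rows are preserved.
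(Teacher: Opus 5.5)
Your proof is correct. The paper omits the argument as immediate and well known, and your entrywise verification is exactly the routine check it skips: entries land in $R$ because $R$ is an ideal of $R^+$, right multiplication cannot create new nonzero rows, and left multiplication by a column-finite matrix confines the nonzero rows to the finite union of the supports of the columns of $A$ indexed by $F$.
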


\begin{lma} \label{RFM(R) has stable rank one}
Let $R$ be an arbitrary ring with stable rank one. Then $\mathrm{RFM}(R)$ has stable rank one.     
\end{lma}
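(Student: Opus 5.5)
Since $\mathrm{RFM}(R)$ is non-unital, we use the characterization in \autoref{def stable rank one}: $\mathrm{RFM}(R)$ is an ideal of the unital ring $R' = \mathrm{FCM}(R^+)$ (\autoref{rfm(r) two-sided fcm(r+)}). It therefore suffices to show the following. Let $a,b,x\in R'$ with $a-1\in \mathrm{RFM}(R)$ and $xa+b=1$. Then there is $y\in \mathrm{RFM}(R)$ such that $a+yb$ is a unit of $R'$.

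The plan is to reduce to a finite corner. Write $a = 1+\alpha$ with $\alpha\in\mathrm{RFM}(R)$. Then $b = 1-xa = -x\alpha + (1-x)$, and a priori $b$ need not lie in the ideal. So we first modify the data. From $xa+b=1$ we get $b = 1 - x - x\alpha$. We would like to replace $x$ by $1$ plus something in the ideal. Multiplying on the right, the key observation is that since $\alpha$ has only finitely many nonzero rows, there is $N$ such that $\alpha = e_N\alpha$, where $e_N$ is the projection onto the first $N$ coordinates. In particular $a$ acts as the identity on the complementary block: $(1-e_N)a = 1-e_N$.

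Next, set $x' = e_N x + (1-e_N)$ and $b' = 1 - x'a$. Then $b' = e_N b$, which lies in $e_N R'$. Moreover $x'a = e_Nxa + (1-e_N)a = e_N x a + (1-e_N)$, so $b' = e_N - e_Nxa = e_N b$. If we find $y'$ with $a+y'b'$ a unit and $y'\in \mathrm{RFM}(R)$, then $y := y'e_N$ works, since $y'b' = y'e_N b$.

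Now we enlarge $N$ so that everything lives in a finite matrix block. The element $e_N b = e_N - e_N x a$ and $a$ may have infinitely many nonzero columns, so we cannot simply restrict to $M_N$. This is the main obstacle: we must handle the columns beyond $N$. To deal with it, write $a$ in block form with respect to $e_N, 1-e_N$ as
\[ a=\begin{pmatrix} a_{11} & a_{12}\\ 0 & 1\end{pmatrix}. \]
Column operations by the unit $u=\begin{pmatrix}1&-a_{12}\\0&1\end{pmatrix}$ (a unit of $R'$, since it is $1$ plus an element of $\mathrm{RFM}$ with inverse $\begin{pmatrix}1&a_{12}\\0&1\end{pmatrix}$) give $au = \mathrm{diag}(a_{11},1)$. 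The relation $x'a+b'=1$ becomes $x'(au)+b'u = u$, and multiplying by $u^{-1}$ on the right yields $x'(au)u^{-1}\cdot$\,\dots. More cleanly, $(u^{-1}x')(au) + u^{-1}b'u = 1$, and $u^{-1}b'u$ still lies in $e_N R' u$. Looking at the $(1,1)$ block, we get $\tilde x_{11} a_{11} + \tilde b_{11} = 1_N$ in $M_N(R^+)$, with $a_{11}\in 1+M_N(R)$. By stable rank one of $R$, $M_N(R)$ has stable rank one (matrix permanence, \cite{Vaserstein84StRangeCond}). So there is $z\in M_N(R)$ with $a_{11}+z\tilde b_{11}$ invertible in $M_N(R^+)$. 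We then lift $z$ to $\tilde y = z\oplus 0$ and transport back through the conjugation by $u$ to get $y\in\mathrm{RFM}(R)$. The remaining work is the bookkeeping of off-diagonal blocks: after using the $(1,1)$ block, the resulting matrix is block upper triangular with invertible diagonal blocks, hence invertible. Verifying this triangularity carefully is where we expect the effort to lie.
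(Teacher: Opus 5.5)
Your argument lands on the paper's proof, but one computation in it is false, and the step built on that computation is unnecessary. With $u=\begin{pmatrix}1&-a_{12}\\0&1\end{pmatrix}$ one gets
\[
au=\begin{pmatrix} a_{11} & (1-a_{11})a_{12}\\ 0 & 1\end{pmatrix},
\]
not $\mathrm{diag}(a_{11},1)$. It is $ua$ (row operations) that is diagonal. Right multiplication by $\begin{pmatrix}1&c\\0&1\end{pmatrix}$ clears $a_{12}$ only if $a_{11}c=-a_{12}$ can be solved.

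The error is harmless. Afterwards you only use three facts: $(au)_{11}=a_{11}$, the lower-left block of $au$ is $0$, and the lower-right block is $1$. All three hold for the correct $au$. If you track your data, $\tilde x_{11}=x_{11}$ and $\tilde b_{11}=b_{11}$, so your $(1,1)$-block equation is literally $x_{11}a_{11}+b_{11}=1_N$.

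The ``main obstacle'' you identify is not one. Because the lower-left block of $a$ vanishes, the upper-left block of $xa+b=1$ already reads $x_{11}a_{11}+b_{11}=1_N$. This holds regardless of $a_{12}$, $x_{12}$, or the columns beyond $N$.

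This is exactly the paper's argument. Take $z\in M_N(R)$ with $U:=a_{11}+zb_{11}$ invertible in $M_N(R^+)$, and put $y=z\oplus 0$. Then
\[
a+yb=\begin{pmatrix} U & a_{12}+zb_{12}\\ 0 & 1\end{pmatrix}.
\]
Its inverse is $\begin{pmatrix} U^{-1} & -U^{-1}V\\ 0&1\end{pmatrix}$ with $V=a_{12}+zb_{12}$, and this is again column-finite because $V$ has only $N$ rows.

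So you can drop both the reduction to $b'=e_Nb$ and the detour through $u$. The triangularity check you defer as the hard part is immediate.
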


\begin{proof}
Since $\mathrm{RFM}(R)$ is a non-unital ring, to prove that $\mathrm{sr}(\mathrm{RFM}(R))=1$ one must use the notion of stable rank one for non-unital rings; see~\autoref{def stable rank one}. Due to~\autoref{rfm(r) two-sided fcm(r+)}, we know that $\mathrm{RFM}(R)$ is a two-sided ideal of the unital ring $\mathrm{FCM}(R^+)$. Thus, consider matrices $A, B, X \in \mathrm{FCM}(R^+)$ with $A - \Id \in \mathrm{RFM}(R)$ and $XA + B = \Id$, where $\Id$ is the identity matrix in $\mathrm{FCM}(R^+)$. We need to prove that there exists a matrix $Y \in \mathrm{FCM}(R^+)$ such that $A + YB$ is a unit in $\mathrm{FCM}(R^+)$.

Since $A - \Id \in \mathrm{RFM}(R)$, there is some $N \in \mathbb{N}$ such that all rows of the matrix $A - \Id$ from the $(N+1)$-th onward are zero. Write $A - \Id = (a_{i,j})_{i,j \in \mathbb{N}}$ and denote by $A_N$ its $N \times N$ submatrix in the upper left corner, that is, $A_N = (a_{i,j})_{1 \leq i,j \leq N}$. Put $A' = (a_{i,j})_{1 \leq i \leq N,\, j > N}$. We have
\begin{equation*}
    A = A - \Id + \Id = \begin{pmatrix}
        A_N & A' \\
        0 & 0 
    \end{pmatrix} + \begin{pmatrix}
        \Id_N & 0 \\
        0 & \Id_{\infty}
    \end{pmatrix},
\end{equation*}
where $\Id_N$ is the $N \times N$ identity matrix and $\Id_\infty$ is the infinite identity matrix. Similarly, put
\begin{equation*}
    X = \begin{pmatrix}
        X_N & X' \\
        X'' & X''' 
    \end{pmatrix}\quad \text{and} \quad B = \begin{pmatrix}
        B_N & B' \\
        B'' & B''' 
    \end{pmatrix},
\end{equation*}
where $X_N$ and $B_N$ are $N \times N$ matrices with coefficients in $R^+$. Then, equation $XA + B = \mathrm{Id}$ gives, in the upper-left block, the equation
$$X_N (A_N + \mathrm{Id}_N) + B_N = \Id_N.$$ 
Now, since $M_N(R)$ is a two-sided ideal of $M_N(R^+)$, $A_N + \Id_N - \Id_N = A_N \in M_N(R)$, and $M_N(R)$ has stable rank one (see \cite[Theorem 3.6]{Vaserstein84StRangeCond}), there exists some $Y_N \in M_N(R)$ such that $A_N + \Id_N + Y_NB_N$ is a unit in $M_N(R^+)$. If we define
\begin{equation*}
    Y = \begin{pmatrix}
        Y_N & 0 \\
        0 & 0 
    \end{pmatrix},
\end{equation*}
then 
\begin{equation*}
    A + YB = \begin{pmatrix}
        A_N + \Id_N + Y_NB_N & A' + Y_NB' \\
        0 & \Id_\infty 
    \end{pmatrix}
\end{equation*}
which is invertible in $\mathrm{FCM}(R^+)$ since each block on the diagonal is. Thus, $\mathrm{RFM}(R)$ has stable rank one.
\end{proof}

\begin{cor} \label{k(r^N) has stable rank one}
Let $R$ be a unital ring with stable rank one. Then $K(R^{(\mathbb{N})})$ has stable rank one. 
\end{cor}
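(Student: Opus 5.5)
This corollary follows by combining \autoref{k(r^n)}(iii) with \autoref{RFM(R) has stable rank one}. By \autoref{k(r^n)}(iii) there is an isomorphism $K(R^{(\mathbb N)})\cong \mathrm{RFM}(R)$. I would first note that this isomorphism is one of rings, not merely of abelian groups. Indeed, identifying $\mathrm{End}_R(R^{(\mathbb N)})$ with $\mathrm{FCM}(R)$ in the usual way (an endomorphism is determined by the images of the standard basis vectors, which form the columns), the map of \autoref{k(r^n)}(iii) is just the restriction of this ring isomorphism to the ideal $K(R^{(\mathbb N)})$. One checks directly that $\theta_{p,\sigma}$ has matrix $(p_i\sigma_j(1))_{i,j}$, so the map is the matrix representation. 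Composition of endomorphisms then corresponds to matrix multiplication, and the image is exactly $\mathrm{RFM}(R)$.

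Since stable rank one (in the non-unital sense of \autoref{def stable rank one}) is defined purely in terms of the ring structure, it is invariant under ring isomorphism. To see this, note that an isomorphism $R_1\cong R_2$ extends to an isomorphism $R_1^+\cong R_2^+$ of the Dorroh extensions, and this extension preserves the conditions $a-1\in R$, $xa+b=1$, and the invertibility of $a+yb$.

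Finally, $R$ is unital with stable rank one, so in particular it is an arbitrary ring with stable rank one. Hence \autoref{RFM(R) has stable rank one} gives $\mathrm{sr}(\mathrm{RFM}(R))=1$, and therefore $\mathrm{sr}(K(R^{(\mathbb N)}))=1$.

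The only point requiring any care is confirming multiplicativity of the identification in \autoref{k(r^n)}(iii). I would handle it as above, by recognizing that identification as the restriction of the standard isomorphism $\mathrm{End}_R(R^{(\mathbb N)})\cong\mathrm{FCM}(R)$.
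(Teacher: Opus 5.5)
Your proposal is correct and is the paper's argument: the paper proves the corollary in one line by combining the identification $K(R^{(\mathbb N)})\cong \mathrm{RFM}(R)$ of \autoref{k(r^n)}(iii) with \autoref{RFM(R) has stable rank one}. Your additional checks are sound details that the paper leaves implicit: that this identification is the restriction of the ring isomorphism $\mathrm{End}_R(R^{(\mathbb N)})\cong\mathrm{FCM}(R)$, and that stable rank one in the non-unital sense is invariant under ring isomorphisms.
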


\begin{proof}
This follows from~\autoref{k(r^n)} and~\autoref{RFM(R) has stable rank one}.
\end{proof}

The next lemma shows that the stable rank one property passes to corners of ideals, motivated by the classical result asserting that stable rank one passes to corners (see \cite[Theorem 2.8]{Vaserstein84StRangeCond}). This will be the key ingredient in proving that $K(P)$ has stable rank one whenever $R$ does and $P$ is a countably generated projective right $R$-module.

\begin{lma} \label{lema corner stable rank one}
Let $R$ be an arbitrary ring, $I$ a two-sided ideal of $R$ with $\mathrm{sr}(I) = 1$ and $e$ an idempotent in $R$. Then $\mathrm{sr}(eIe) = 1$. 
\end{lma}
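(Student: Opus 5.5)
The plan is to reduce the statement to the hypothesis $\mathrm{sr}(I)=1$ using the ``ideal in a unital ring'' form of the stable rank one condition recalled in \autoref{def stable rank one}. The steps are: embed the corner problem into $R^+$, apply the hypothesis on $I$ there, and then compress the resulting unit back into the corner.

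\textbf{Setup.} Put $R' = eR^+e$. This is a unital ring with unit $e$. Since $I$ is a two-sided ideal of $R^+$, we have $eR^+e\cdot eIe\subseteq eIe$ and $eIe\cdot eR^+e\subseteq eIe$. So $eIe$ is a two-sided, and in particular right, ideal of $R'$. By the criterion in \autoref{def stable rank one} (Vaserstein, Lemma 3.5 and Theorem 3.6), it suffices to prove the following. Given $a,b,x\in R'$ with $a-e\in eIe$ and $xa+b=e$, there is $y'\in R'$ such that $a+y'b$ is a unit in $R'$.

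\textbf{Lifting.} Set $A=a+(1-e)$, $X=x+(1-e)$ and $B=b$ in $R^+$. Since $a,x$ lie in the corner, all cross terms vanish. Hence $XA=xa+(1-e)$, so $XA+B=1$, and $A-1=a-e\in I$. Now $I$ is a two-sided ideal of the unital ring $R^+$ with $\mathrm{sr}(I)=1$. The same criterion then gives $y\in R^+$ (indeed $y\in I$) such that $u=A+yB$ is invertible in $R^+$.

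\textbf{Compression (main obstacle).} The difficulty is that $y$ need not lie in the corner. I will handle this with the Peirce decomposition with respect to $e$ and $1-e$. Because $b=ebe$, we have $yb=(eye)b+((1-e)ye)b$. Writing elements of $R^+$ as $2\times 2$ matrices in this decomposition gives
\[
u=\begin{pmatrix} d & 0\\ c & 1-e\end{pmatrix},\qquad d=a+(eye)b,\quad c=(1-e)yeb .
\]
This factors as
\[
u=\begin{pmatrix} d&0\\0&1-e\end{pmatrix}\begin{pmatrix} e&0\\ c&1-e\end{pmatrix}.
\]
The second factor is unipotent, hence invertible. Therefore $\mathrm{diag}(d,1-e)=d+(1-e)$ is invertible in $R^+$. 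Its inverse commutes with $e$, because $e$ commutes with $d+(1-e)$. So the inverse is block diagonal, and its $e$-corner is an inverse of $d$ in $R'$.

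Thus $y'=eye$ satisfies the requirement, and $y'\in eIe$ when $y\in I$. This proves $\mathrm{sr}(eIe)=1$. The only delicate points are the bookkeeping of the corner products and checking that the unit produced in $R^+$ descends to the corner; the factorization above takes care of the latter.
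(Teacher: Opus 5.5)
Your proposal is correct and is essentially the paper's proof. It uses the same lift $a+(1-e)$, $x+(1-e)$, $b$ to $R^+$ and the same application of $\mathrm{sr}(I)=1$ there. Your Peirce factorization $u=(d+(1-e))(1+c)$, with $c=(1-e)yeb$ of square zero, is exactly the paper's step of multiplying $u$ on the right by the unipotent $1-(1-e)yb$ to get $a+(eye)b+(1-e)$, after which the inverse is compressed to the corner. The paper multiplies that inverse by $e$ on both sides instead of observing that it commutes with $e$, which amounts to the same thing.
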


\begin{proof}
It is easy to check that $I$ is a two-sided ideal of the unitization $R^+$ of $R$, so we can assume in the statement that $R$ is unital. 

Now, viewing $eIe$ as a two-sided ideal of the unital ring $eRe$ with identity $e$, we use the characterization discussed in~\autoref{def stable rank one} to prove that $\mathrm{sr}(eIe) = 1$. So let $a,b,x \in eRe$ such that $a - e \in eIe$ and $xa + b = e$. We must show that there exists $y \in eRe$ such that $a + yb$ is invertible in $eRe$. 

Define the following elements of $R$, $a' := a + (1-e), b' := b$ and $x' := x + (1-e).$ Since $x, a \in eRe$, 
\begin{equation*}
    x'a' + b' = (x + (1-e))(a + (1-e)) + b = xa + (1-e) + b = 1.
\end{equation*}
Observe that $a' - 1 = a + (1-e) - 1 = a - e \in eIe \subseteq I.$ Since $\mathrm{sr}(I)=1$, there exists $y' \in R$ such that
$u := a' + y'b' = a + (1-e) + y'b$
is a unit in $R$.

We now modify $y'$ to lie in $eRe$. Set $y := ey'e \in eRe$ and define $v := u(1 - (1-e)y'b)$. Since $((1-e)y'b)^2 = 0$, the element $1 - (1-e)y'b$ is a unit in $R$, hence $v$ is also a unit. Now, 
\begin{equation*}
v = (a + (1-e) + y'b)(1 - (1-e)y'b) = a + ey'b + (1 - e) = a + yb + (1-e).
\end{equation*}
Thus, $a + yb + (1-e)$ is invertible in $R$. Put $w \coloneqq (a + yb + (1-e))^{-1}$. 

Finally, let us show that $a + yb$ is a unit in $eRe$ whose inverse is $ewe$. Indeed, since $(a + yb + (1-e))w = 1$, right and left multiplication by $e$ yields $(a + yb)ewe = e$. Similarly, using $w(a + yb + (1-e)) = 1$ we obtain $ewe(a+yb) = e$. Hence, $\mathrm{sr}(eIe)=1.$
\end{proof}

We now combine the previous results to prove that the ring $K(P)$ inherits the stable rank one property from the underlying ring. The key observation is that $K(P)$ can be realized as a corner of $K(R^{\mathbb{(N)}})$.

\begin{prp} \label{k(p) te rang estable 1}
Let $R$ be an arbitrary ring with stable rank one and let $P \in \mathcal{CP}(R)$. Then $K(P)$ has stable rank one. 
\end{prp}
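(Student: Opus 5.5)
We realize $K(P)$ as a corner $eIe$ of a two-sided ideal $I$ with $\mathrm{sr}(I)=1$ inside a unital ring, and then apply \autoref{lema corner stable rank one}.

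\textbf{Setting up the corner.} Since $R$ may be non-unital, we work over $R^+$ throughout. By \autoref{def cp(r) no unital}, there is an idempotent $e\in\mathrm{FCM}(R)\subseteq \mathrm{FCM}(R^+)$ with $P\cong e\bigl((R^+)^{(\mathbb N)}\bigr)$. Put $F=(R^+)^{(\mathbb N)}$, so $\mathrm{End}_{R^+}(F)\cong\mathrm{FCM}(R^+)$. Let $\iota\colon P\to F$ be the inclusion and $\pi\colon F\to P$ the projection, so that $\pi\iota=\mathrm{id}_P$ and $\iota\pi=e$. By properties (ii) and (iii) of $K$, the map $\theta\mapsto \iota\theta\pi$ sends $K(P)$ into $eK(F)e$. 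Conversely, $\phi\mapsto \pi\phi\iota$ sends $eK(F)e$ into $K(P)$. These maps are mutually inverse ring isomorphisms, so $K(P)\cong eK(F)e$.

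\textbf{Identifying the ideal.} Here $K(F)$ is taken over $R^+$, and by \autoref{k(r^n)}(iii) it is $\mathrm{RFM}(R^+)$. This ring need not have stable rank one, since $R^+$ need not. So I shrink to $I=\mathrm{RFM}(R)$, which is a two-sided ideal of $\mathrm{FCM}(R^+)$ by \autoref{rfm(r) two-sided fcm(r+)}. It has stable rank one by \autoref{RFM(R) has stable rank one}. I then claim $eK(F)e=eIe$.

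For the inclusion $eIe\subseteq eK(F)e$, note that $I\subseteq\mathrm{RFM}(R^+)=K(F)$.

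For the reverse inclusion, take $\phi\in K(F)$ and write $e\phi e$ as a matrix. Since $e$ has entries in $R$ and $\phi$ has finitely many nonzero rows, the product $e\phi e$ has entries in $R$, because $R$ is an ideal of $R^+$. To see it has finitely many nonzero rows, write $e\phi e=(e\phi)e$. Here $e\phi$ is a finite-column matrix applied to a matrix with finitely many nonzero rows, so only finitely many columns of $e$ contribute. Those columns have finite support, so $e\phi$ has finitely many nonzero rows. Hence $e\phi e\in\mathrm{RFM}(R)$, and $e\phi e=e(e\phi e)e\in eIe$.

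\textbf{Conclusion and main obstacle.} Applying \autoref{lema corner stable rank one} with the ring $\mathrm{FCM}(R^+)$, the ideal $I=\mathrm{RFM}(R)$ and the idempotent $e$ gives $\mathrm{sr}(eIe)=1$, hence $\mathrm{sr}(K(P))=1$. I expect the main obstacle to be the bookkeeping in the identification $K(P)\cong eIe$ in the non-unital setting. One has to decide whether $K(P)$ is computed over $R^+$, check that it is still the right object, and verify that the row-finiteness and $R$-valued entries survive compression by $e$. The unital case is just the special case where $R^+$ can be replaced by $R$.
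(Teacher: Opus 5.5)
Your proposal is correct and follows essentially the same route as the paper: identify $K(P)\cong eK((R^+)^{(\mathbb N)})e\cong e\,\mathrm{RFM}(R^+)e=e\,\mathrm{RFM}(R)e$, then apply \autoref{lema corner stable rank one} to the two-sided ideal $\mathrm{RFM}(R)$ of $\mathrm{FCM}(R^+)$. The differences are cosmetic: you use $\theta\mapsto\iota\theta\pi$ where the paper uses $f\mapsto f\circ e$, and you check by hand that $e\phi e$ is row-finite with entries in $R$, where the paper instead uses that $\mathrm{RFM}(R^+)$ is a two-sided ideal of $\mathrm{FCM}(R^+)$.
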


\begin{proof}
Since $P$ is a countably generated projective right $R^+$-module such that $PR = P$, there exists an idempotent $e \in \mathrm{FCM}(R)$ such that $P \cong e((R^{+})^{(\mathbb{N})}) = e(R^{(\mathbb{N})})$; see~\autoref{def cp(r) no unital}. Now, the map 
\begin{equation*}
    \mathrm{End}_R(e((R^+)^{(\mathbb{N})})) \rightarrow e\mathrm{End}_R((R^+)^{(\mathbb{N})})e 
\end{equation*}
given by $f \mapsto f \circ e$ defines a ring isomorphism and its restriction to $K(e((R^+)^{(\mathbb{N})}))$ takes values in $eK((R^+)^{(\mathbb{N})})e$. Indeed, 
let 
\begin{equation*}
    \theta = \sum_{i = 1}^n \theta_{e(x_i), \tau_i} \in K(e((R^+)^{(\mathbb{N})}))
\end{equation*}
with $x_i \in (R^+)^{(\mathbb{N})}$ and $\tau_i \in (e((R^+)^{(\mathbb{N})}))^*$. Then 
\begin{equation*}
    \theta \circ e = \sum_{i = 1}^n \theta_{e(x_i), \tau_i} \circ e = e \circ \sum_{i = 1}^n \theta_{x_i, \tau_i \circ e} \circ e,
\end{equation*}
and the latter belongs to $eK((R^+)^{(\mathbb{N})})e$, since $x_i \in (R^+)^{(\mathbb{N})}$ and $\tau_i\circ e \in ((R^+)^{(\mathbb{N})})^*$. Moreover, every element of $eK((R^+)^{(\mathbb{N})})e$ is the image of an element of 
$K(e((R^+)^{(\mathbb{N})}))$, since 
\begin{equation*}
    e \circ \sum_{i = 1}^n \theta_{x_i, \sigma_i} \circ e = \sum_{i = 1}^n \theta_{e(x_i), {\sigma_i}_{|e((R^+)^{(\mathbb{N})})}} \circ e,
\end{equation*}
where $x_i \in (R^+)^{(\mathbb{N})}$, $\sigma_i \in ((R^+)^{(\mathbb{N})})^*$, and ${\sigma_i}_{|e((R^+)^{(\mathbb{N})}}$ is the restriction of $\sigma_i$ to $e((R^+)^{(\mathbb{N})})$. Hence, $K(e((R^+)^{(\mathbb{N})}))$ and $eK((R^+)^{(\mathbb{N})})e$ are isomorphic as rings. 

Now, by~\autoref{rfm(r) two-sided fcm(r+)} we know that $\mathrm{RFM}(R)$ is a two-sided ideal of the unital ring $\mathrm{FCM}(R^+) \cong \mathrm{End}_{R^+}((R^+)^{(\mathbb{N})})$. Since $\mathrm{RFM}(R)$ has stable rank one (\autoref{RFM(R) has stable rank one}), and since $e$ is an idempotent in $\mathrm{FCM}(R^+)$, it follows from~\autoref{lema corner stable rank one} that $e\mathrm{RFM}(R)e$ also has stable rank one. 

Observe that $e\mathrm{RFM}(R)e = e\mathrm{RFM}(R^+)e$. Clearly $e\mathrm{RFM}(R)e \subseteq e\mathrm{RFM}(R^+)e$.~For the reverse inclusion, note that if $eae \in e\mathrm{RFM}(R^+)e$, with $a \in \mathrm{RFM}(R^+)$, then $ea \in \mathrm{RFM(R^+)}$ since $\mathrm{RFM(R^+)}$ is a two-sided ideal of $\mathrm{FCM}(R^+)$. But also, $ea$ has its coefficients in $R$, since $e \in \mathrm{FCM}(R)$ and $R$ is a two-sided ideal of $R^+$. Hence $ea \in \mathrm{RFM(R)}$. Therefore, $eae = e(ea)e \in e\mathrm{RFM}(R)e$. 

Putting everything together, we obtain
\begin{equation*}
    K(P) \cong K(e((R^+)^{(\mathbb{N})})) \cong eK((R^+)^{(\mathbb{N})})e \cong e\mathrm{RFM}(R^+)e = e\mathrm{RFM}(R)e,
\end{equation*}
where we have used~\autoref{k(r^n)} in the third step. Since the latter has stable rank one, we conclude that $K(P)$ has stable rank one. 
\end{proof}

\section{Cuntz equivalence in rings of stable rank one}

In this section, we prove the remarkable result that Cuntz equivalence of countably generated projective modules amounts to isomorphism whenever the ring has stable rank one. We begin by defining the notion of compact containment following the approach in \cite{CowEllIva08CuInv}. See also \cite[4.4]{AraPerTom11Cu} for a more detailed exposition.  

\begin{pgr}[Compact containment of modules]
Let $R$ be a unital ring, let $P$ be a right $R$-module, and let $X$ and $Y$ be submodules of $P$ with $X \subseteq Y$. We say that $X$ is \emph{compactly contained} in $Y$, denoted by $X \subset\!\subset Y$, if there exists $\theta \in K(P, Y)$ such that $\theta_{|X} = \mathrm{id}_{X}$, where $\mathrm{id}_X$ denotes the identity map of $X$. As mentioned before, note that we are making the standard abuse of notation of identifying the identity map $\mathrm{id}_X$ with the natural inclusion $\iota \colon X \rightarrow Y$. If the ambient module $P$ containing $X$ and $Y$ is not clear from the context, we shall refer to this situation by saying that $X \subset\!\subset Y$ in $P$.
\end{pgr}

\begin{rmk} \label{torre d'un projectiu en f.g. compactament continguts}
Let $P$ be a countably generated projective right module over a unital ring $R$. Then we can write $P$ as a countable union of finitely generated submodules, $$P = \bigcup_{i \geq 1} P_i$$ such that $P_i \subset\!\subset P_{i+1}$ for all $i \geq 1$. Indeed, let $\{ x_1, x_2, \dots\}$ be a countable generating set of $P$ and set $P_1 \coloneqq \langle x_1 \rangle$. Suppose that $P_n$ has been constructed. By~\autoref{submoduls de projectius i k(p)}, there exists $\theta \in K(P)$ such that $\theta_{|P_n} = \mathrm{id}_{P_n}$. Write $\theta = \sum_{i = 1}^m \theta_{y_i, \tau_i}$, where $y_i \in P$ and $\tau_i \in P^*$. Define $P_{n+1}$ to be the submodule generated by $x_1, \dots, x_{n+1}, y_1, \dots, y_m$. Then $P_{n+1}$ is finitely generated and $\theta \in K(P, P_{n+1})$. Hence, $P_n \cc P_{n+1}$.
\end{rmk}

The lemma below shows that, under the right assumptions, compact containment is preserved by module homomorphisms.

\begin{lma} \label{lema ucircphi CC ucircphi}
Let $R$ be a unital ring, $P$ and $Q$ in $\mathcal{CP}(R)$ and $A, B$ be submodules of $P$ such that $A \subset\!\subset B$. Suppose that there are module homomorphisms $\phi\colon B \rightarrow Q$ and $\psi\colon Q \rightarrow P$ such that $\psi \circ \phi_{|A} = \iota_A$, where $\iota_A$ is the inclusion map of $A$ in $P$.  Then $\phi(A) \subset\!\subset \phi(B)$. 
\end{lma}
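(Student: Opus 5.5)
We need an element $\theta' \in K(Q,\phi(B))$ whose restriction to $\phi(A)$ is the identity. The plan is to transport the compact operator witnessing $A \cc B$ across $\phi$ and $\psi$.

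Since $A \cc B$ in $P$, there is $\theta \in K(P,B)$ with $\theta_{|A} = \mathrm{id}_A$. Write
\[
\theta = \sum_{i=1}^m \theta_{b_i,\sigma_i}, \qquad b_i \in B,\ \sigma_i \in P^*.
\]
Define $\theta' := \phi \circ \theta \circ \psi \colon Q \to \phi(B)$. This composite makes sense because $\theta$ takes values in $B$, where $\phi$ is defined. By properties (ii) and (iii) of \autoref{def K(P)},
\[
\theta' = \sum_{i=1}^m \theta_{\phi(b_i),\, \sigma_i\circ\psi}.
\]
Here $\phi(b_i) \in \phi(B)$ and $\sigma_i \circ \psi \in Q^*$, so $\theta' \in K(Q,\phi(B))$.

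Next we check that $\theta'$ fixes $\phi(A)$. Let $a \in A$. Then $\psi(\phi(a)) = a$ by hypothesis, so
\[
\theta'(\phi(a)) = \phi(\theta(\psi\phi(a))) = \phi(\theta(a)) = \phi(a),
\]
where the last step uses $\theta_{|A} = \mathrm{id}_A$. Hence $\theta'_{|\phi(A)} = \mathrm{id}_{\phi(A)}$.

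Finally, $\phi(A) \subseteq \phi(B)$, and both are submodules of the ambient module $Q$. Therefore $\phi(A) \cc \phi(B)$ in $Q$.

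The only point needing care is that $\phi$ is defined only on $B$, not on all of $P$. This is handled because $\theta$ lands in $B$, so the composite $\phi \circ \theta \circ \psi$ never applies $\phi$ outside its domain. Beyond that, the argument is a routine verification.
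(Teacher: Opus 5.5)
Your proposal is correct and follows the paper's proof: the paper defines the same operator $\theta' = \sum_{i} \theta_{\phi(b_i),\,\tau_i\circ\psi} \in K(Q,\phi(B))$, which is exactly your $\phi\circ\theta\circ\psi$, and verifies $\theta'(\phi(a)) = \phi(\theta(a)) = \phi(a)$ using $\psi\circ\phi_{|A} = \iota_A$ and $\theta_{|A} = \mathrm{id}_A$. Writing $\theta'$ first as the composite and then expanding is just a presentational variant.
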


\begin{proof}
Since $A \subset\!\subset B$, there exists $\theta \in K(P,B)$ such that $\theta_{|A} = \mathrm{id}_{A}$. Suppose 
\begin{equation*}
    \theta = \sum_{i=1}^n \theta_{b_i, \tau_i}
\end{equation*}
with $n \in \mathbb{N}$, $b_i \in B$ and $\tau_i \in P^*$. Set 
\begin{equation*}
    \theta' \coloneqq \sum_{i=1}^n \theta_{\phi (b_i), \tau_i \circ \psi}. 
\end{equation*}
Since for all $i \in \{1, \dots, n\}$, $\phi(b_i) \in \phi(B)$ and $\tau_i \circ \psi \in Q^*$, we see that $\theta' \in K(Q,\phi(B))$. Moreover, for all $a \in A$, we have 
\begin{equation*}
\theta'(\phi(a)) = \sum_{i=1}^n \phi(b_i)(\tau_i \circ \psi)(\phi(a)) = \sum_{i=1}^n \phi(b_i) \tau_i (a) = \phi(\theta(a)) = \phi(a), 
\end{equation*}
where we have used that $\psi \circ \phi_{|A} = \mathrm{\iota}_A$ in the second equality and that $\theta_{|A} = \mathrm{id}_{A}$ in the last equality. Hence, $\phi(A) \subset\!\subset \phi(B)$. 
\end{proof}

\begin{rmk} \label{auxiliar lemma cc}
Observe that, in the previous lemma, if we take $P = Q$, $\phi = u_{|B}$ and $\psi = u^{-1}$,  where $u \in \mathrm{Aut}_R(P)$, then $A \subset\!\subset B$ implies $u(A) \subset\!\subset u(B)$. 
\end{rmk}

The following lemma is the key to the proof of the main result in this section.

\begin{lma} \label{lema aprox}
Let $R$ be an arbitrary ring with stable rank one and let $P \in \mathcal{CP}(R)$. Suppose $A$, $B$, $A_1$, and $B_1$ are $R^+$-submodules of $P$ 
with $A_1 \subset\!\subset A$ and $B_1 \subset\!\subset B$. Let $Y$ be an $R^+$-submodule of $P$ containing $A$ and $B$ and let $\varphi \colon A \rightarrow B$ be an isomorphism such that $\varphi(A_1) = B_1$. Then, there exists an automorphism $u \in \mathrm{Aut}_{R^+}(P)$ such that $(u \circ \varphi)(s) = s$ for all $s \in A_1$, and such that $u(Y)\subseteq Y$, so that $(u \circ \varphi)(A) \subseteq Y$.  
\end{lma}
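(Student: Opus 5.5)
We will mimic the C*-algebraic argument of \cite{CowEllIva08CuInv}: the automorphism $u$ will be obtained from the stable rank one property of $K(P)$ (\autoref{k(p) te rang estable 1}). That is, we approximate the partial isomorphism $\varphi^{-1}$ on $B_1$ by a unit of the unitization of $K(P)$. All maps are $R^+$-linear, and we work inside $\mathrm{End}(P)$, where $K(P)$ is an ideal, so the criterion of \autoref{def stable rank one} applies with $R'=\mathrm{End}(P)$.

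\emph{Step 1: build compact maps.} Since $B_1\cc B$, there is $\beta\in K(P,B)$ with $\beta_{|B_1}=\mathrm{id}$. Put $b:=\varphi^{-1}\circ\beta\in K(P,A)$, viewed in $K(P)$ via $A\subseteq P$. It is compact by property (ii) of $K$, since $\varphi^{-1}\colon B\to A\subseteq P$. Similarly, since $A_1\cc A$, there is $\alpha\in K(P,A)$ with $\alpha_{|A_1}=\mathrm{id}$. Put $a':=\varphi\circ\alpha\in K(P,B)$. Then $b\,a'$ restricts to the identity on $A_1$, and $a'\,b$ restricts to the identity on $B_1$, because $\varphi(A_1)=B_1$ and $\beta,\alpha$ fix $B_1,A_1$. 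We want an element of the form $1+k$, with $k\in K(P)$, that agrees with $b$ on $B_1$.

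\emph{Step 2: apply stable rank one.} Consider the elements $x:=a'$ and $a:=b$, and set $c:=1-a'b$, so that $xa+c=1$. Here $a$ is not of the form $1+$compact. So I would instead use the standard trick: set $\tilde a:=1-(1-b)$ twisted appropriately, or more simply apply the criterion to $a=b+(1-\beta')$, where $\beta'$ is a compact map fixing $B_1$ chosen with $\beta'\beta=\beta$. Alternatively, since $\mathrm{sr}=1$ is left-right symmetric, use the version with $a-1\in K(P)$ directly on $a:=1-\beta+b$. This element equals $b$ on $B_1$ and satisfies $a-1\in K(P)$. Then $xa+c=1$ with $x$ and $c$ compact perturbations, so we obtain $y$ with $v:=a+yc$ invertible. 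On $B_1$ we have $c=1-a'b=0$, so $v_{|B_1}=b_{|B_1}=\varphi^{-1}_{|B_1}$. Take $u:=v^{-1}$. Then $u\circ\varphi$ fixes $A_1$, since $\varphi(A_1)=B_1$ and $v\varphi(s)=s$ for $s\in A_1$.

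\emph{Step 3: invariance of $Y$.} This is the delicate part. Here $v=1+k$ with $k$ compact, and its range lies in $Y$ as long as $k$ is built from elements of $Y$: $b$ has range in $A\subseteq Y$, $\beta$ has range in $B\subseteq Y$, and $c=1-a'b$ with $a'$ ranging in $B$. The term $yc$ requires care. I will arrange $y$ compact and multiply on the left by something whose range is in $Y$, or write $v=1+k$ with $k(P)\subseteq Y$ by expanding the products. Then $v(Y)\subseteq Y$. We need the same for $u=v^{-1}=1-kv^{-1}$, whose range of the compact part lies in $\mathrm{im}(k)\subseteq Y$, so $u(Y)\subseteq Y$. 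The main obstacle is ensuring that $y$ can be chosen so that $yc$ has image in $Y$. I would first try replacing $y$ by $\pi y$ for a suitable compact $\pi$ with range in $Y$. If that fails, I would work in the corner/ideal of compacts with range in $Y$, $K(P,Y)$, a right ideal of $\mathrm{End}(P)$. By Vaserstein's right-ideal form (\autoref{def stable rank one}), stable rank one of $K(P,Y)$ would let $y$ be taken inside it. Finally, $(u\circ\varphi)(A)\subseteq u(Y)\subseteq Y$.
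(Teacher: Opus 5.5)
Your proof follows the paper's argument in mirror image, but it has two slips and one unjustified step. The paper applies the stable rank one criterion to $a=\varphi\theta+1-\theta$, so the resulting unit $v$ agrees with $\varphi$ on $A_1$, and then sets $u=v^{-1}$. You instead build $a=1-\beta+\varphi^{-1}\beta$, which agrees with $\varphi^{-1}$ on $B_1$. That is fine, once the following are fixed.

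\emph{The two slips.} First, since $v_{|B_1}=\varphi^{-1}_{|B_1}$, you must take $u:=v$, not $u:=v^{-1}$. You have shown $v(\varphi(s))=s$ for $s\in A_1$, whereas $v^{-1}(\varphi(s))$ has no reason to equal $s$. Second, after replacing $a$ by $1-\beta+b$ you must redefine $c:=1-xa$ with $x=a'=\varphi\alpha$. The old $c=1-a'b$ gives $xa+c=1+a'(1-\beta)$, which is not $1$ in general. The new $c$ still vanishes on $B_1$, because $a$ and $b$ agree there, so your computation $v_{|B_1}=\varphi^{-1}_{|B_1}$ still holds.

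\emph{The missing justification.} The invariance $u(Y)\subseteq Y$ requires choosing $y\in K(P,Y)$, which needs $\mathrm{sr}(K(P,Y))=1$. You state this only conditionally. The paper supplies it as follows:
\begin{enumerate}[label=(\alph*)]
\item $K(P,Y)$ is a right ideal of $K(P)$, which has stable rank one by \autoref{k(p) te rang estable 1}.
\item One-sided ideals of rings with stable rank one again have stable rank one \cite[Theorem 3.6]{Vaserstein84StRangeCond}.
\item Apply the right-ideal criterion of \autoref{def stable rank one} with $R'=\mathrm{End}_{R^+}(P)$. Here $a-1=b-\beta\in K(P,Y)$, since $b$ ranges in $A\subseteq Y$ and $\beta$ in $B\subseteq Y$. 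This yields $y\in K(P,Y)$.
\item Because $K(P,Y)$ is a right ideal of $\mathrm{End}_{R^+}(P)$, also $yc\in K(P,Y)$.
\end{enumerate}
Hence $v=1+k$ with $k\in K(P,Y)$, which gives $v(Y)\subseteq Y$ directly, and also $v^{-1}(Y)\subseteq Y$ as you observe. Drop your alternative of replacing $y$ by $\pi y$: there is no reason for $a+\pi yc$ to remain invertible.
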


\begin{proof}
Let $\theta \in K(P, A)$ be such that $\theta_{|A_1} = \mathrm{id}_{A_1}$ and let $\theta' \in K(P, B)$ be such that $\theta'_{|B_1} = \mathrm{id}_{B_1}$. Put $\psi = \varphi^{-1}$, $\varphi' := \varphi \circ \theta$ and $\psi' := \psi \circ \theta'$. Since $A$ and $B$ are submodules of $Y$ we have that $\varphi' \in K(P, B) \subseteq K(P,Y)$ and $\psi' \in K(P, A) \subseteq K(P,Y)$. Put $x \coloneqq \psi' + 1 - \theta'$, $a \coloneqq \varphi' + 1 - \theta$ and $b \coloneqq 1 - (\psi' + 1 - \theta')(\varphi' + 1 - \theta)$, where $1$ is the identity map on $P$. Then   
\begin{equation*}
    xa + b = (\psi' + 1 - \theta')(\varphi' + 1 - \theta) + 1 - (\psi' + 1 - \theta')(\varphi' + 1 - \theta) = 1.
\end{equation*}
Now, observe that $K(P, Y)$ is a right ideal of $K(P)$, which has stable rank one because of~\autoref{k(p) te rang estable 1}. Hence, $K(P, Y)$ has stable rank one as well; see \cite[Theorem 3.6]{Vaserstein84StRangeCond}. 
Note also that $K(P,Y)$ is a right ideal of the unital ring $\mathrm{End}_{R^+}(P)$.
Since $x, a, b \in \mathrm{End}_{R^+}(P)$, $a - 1 = \varphi' - \theta \in K(P, Y)$ and $xa + b = 1$ by the above equation, the fact that $\mathrm{sr}(K(P, Y)) = 1$ implies that there exists $y \in K(P, Y)$ such that 
\begin{equation*}
    v \coloneqq (\varphi' + 1 - \theta) + y(1 - (\psi' + 1 - \theta')(\varphi' + 1 - \theta)) 
\end{equation*}
is an element of $\mathrm{Aut}_{R^+}(P)$. Observe that if $s \in A_1$, $\theta(s) = s$ and $\theta'(\varphi(s)) = \varphi(s)$, so we get, using that $\psi=\varphi^{-1}$, that
\begin{align*}
    (1 - (\psi' + 1 - \theta')(\varphi' + 1 - \theta))(s) & = s - (\psi' + 1 - \theta')(\varphi'(s) + s - \theta(s)) \\
    & = s - (\psi' + 1 - \theta')(\varphi(s)) \\
    & = s - (\psi \circ \varphi)(s) \\
    & = 0\; . 
\end{align*}
Hence, for all $s \in A_1$, $v(s) =\varphi'(s) + s - \theta(s) = \varphi(s)$. Put $u \coloneqq v^{-1}$. Then 
\begin{equation*}
    (u \circ \varphi)(s) = uv(s) = s,
\end{equation*}
as desired. Finally, to see that $(u \circ \varphi)(A) \subseteq Y$, observe that 
\begin{equation*}
    v = 1 + (\varphi' - \theta + y(1 - (\psi' + 1 - \theta')(\varphi' + 1 - \theta))),
\end{equation*}
so, since $\varphi', \theta, y \in K(P,Y)$, we may write $v = 1 + v'$, for some $v' \in K(P, Y)$. Then $v^{-1}(Y) \subseteq Y$. Otherwise, there is some $s \notin Y$ such that $v(s) \in Y$. But then
\begin{equation*}
    s = v(s) - v'(s) \in Y, 
\end{equation*}
which is a contradiction. Hence, $u(Y) \subseteq Y$, so $(u \circ \varphi)(A) \subseteq u(Y) \subseteq Y$, as we wanted to prove. 
\end{proof}

\begin{pgr}[Pure submodules]
\label{pgr:pure}
Let $R$ be a unital ring and let $\varphi \colon N \rightarrow M$ be a monomorphism of right $R$-modules. We say that $\varphi$ is a \emph{pure embedding} or a \emph{pure monomorphism} if for every left $R$-module $X$, the natural induced map
\begin{equation*}
    \varphi \otimes_R \mathrm{id}_X  \colon N \otimes_R X \longrightarrow M \otimes_R X
\end{equation*}
is injective. If $N$ is a submodule of $M$, we say that $N$ is a \emph{pure submodule} of $M$ if the natural inclusion map $\iota \colon N \hookrightarrow M$ is a pure embedding, and we write $N \leq_{\mathrm{pure}} M$. 

A monomorphism $\varphi \colon N \rightarrow M$ of right $R$-modules is said to be \emph{locally split} if, for any finitely generated submodule $F \subseteq N$, there exists $\sigma \in \mathrm{Hom}_R(M, N)$ such that $\sigma\circ\varphi_{|F} = \mathrm{id}_{F}$. If $\varphi \colon N \rightarrow M$ is a locally split monomorphism, then $\varphi(N)$ is a pure submodule of $M$ (see \cite[Exercise 4.38, p. 163]{lam1999lectures}). Moreover, if $M$ is projective and $\varphi \colon N \rightarrow M$ is a monomorphism of right $R$-modules, $\varphi$ is a pure embedding if and only if it is locally split; see \cite[Theorem 8]{Field72RegRings} and \cite[Proposition 2.2]{Chase1960}. In particular, $N$ is pure in $M$ if and only if the inclusion map $\iota \colon N \hookrightarrow M$ is locally split. 
\end{pgr}

\begin{prp} \label{impl right to left}
Let $R$ be a unital ring, and let $P$ and $Q$ be countably generated projective right $R$-modules such that $P$ is a pure submodule of $Q$. Then $P \precsim Q$. If, moreover, there is a finitely generated submodule $Y$ such that $P \subseteq Y \subseteq Q$, then $P \prec Q$. 
\end{prp}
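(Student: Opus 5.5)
The plan is to unwind the definitions directly, using the characterization of pureness recalled in \autoref{pgr:pure}. Since $Q$ is projective and $P\leq_{\mathrm{pure}} Q$, the inclusion $\iota\colon P\hookrightarrow Q$ is locally split. Concretely, for every finitely generated submodule $X$ of $P$ there is $\sigma\in\mathrm{Hom}_R(Q,P)$ with $\sigma\circ\iota_{|X}=\mathrm{id}_X$.

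For the first assertion, fix a finitely generated submodule $X\subseteq P$. Take $\phi\coloneqq \iota_{|X}\colon X\to Q$, the restriction of the inclusion, and $\psi\coloneqq\sigma\colon Q\to P$, where $\sigma$ is the local splitting attached to $X$. Then $\psi\circ\phi=\sigma\circ\iota_{|X}=\mathrm{id}_X$, which is exactly the factorization demanded in \autoref{definicio cp(r) unital}. Since $X$ was arbitrary, $P\precsim Q$.

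For the second assertion, let $Y$ be the given finitely generated submodule with $P\subseteq Y\subseteq Q$; this $Y$ will be the witness. For any finitely generated $X\subseteq P$, use the same pair $\phi=\iota_{|X}$, $\psi=\sigma$. Then $\phi(X)=X\subseteq P\subseteq Y$, so the extra condition $\phi(X)\subseteq Y$ in the definition of $\prec$ holds, with $Y$ independent of $X$. Hence $P\prec_Y Q$.

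I do not expect a genuine obstacle. The only nontrivial input is the equivalence between pure and locally split monomorphisms into a projective module, which the paper has already quoted from \cite{Field72RegRings} and \cite{Chase1960}. The one point to check is that this result is applied to the inclusion into $Q$, which is where projectivity is needed, and not to $P$.
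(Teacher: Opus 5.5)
Your proof is correct and matches the paper's own argument: pureness into the projective module $Q$ gives a local splitting $\sigma\colon Q\to P$ with $\sigma_{|X}=\mathrm{id}_X$. You then take $\phi=\iota_{|X}$ and $\psi=\sigma$, and for the second claim observe that $\phi(X)=X\subseteq P\subseteq Y$, so the fixed finitely generated $Y$ witnesses $P\prec_Y Q$.
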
 

\begin{proof}
Let $X$ be a finitely generated submodule of $P$. Since $Q$ is projective, $P$ being a pure submodule of $Q$ is equivalent to the inclusion map $\iota\colon P \hookrightarrow Q$ being locally split. Hence, since $X$ is finitely generated, there exists $\sigma \in \mathrm{Hom}_R(Q, P)$ such that $\sigma_{|X} = \mathrm{id}_{X}$. Set $\phi \coloneqq \iota_{|X} \colon X \rightarrow Q$ and $\psi \coloneqq \sigma \colon Q \rightarrow P$. Then $\psi \circ \phi = \mathrm{id}_{X}$, so $P \precsim Q$ as desired. If, moreover, $Y$ is a finitely generated submodule such that $P \subseteq Y \subseteq Q$, then observe that $\phi(X) = X \subseteq P \subseteq Y$, so $P \prec Q$ as desired.  
\end{proof}

\begin{rmk}
\label{rmk:pures-are-projective} It is not hard to show, using \cite[Theorem 2.1]{Whitehead}, that a countably generated pure submodule of a projective module is itself a projective module. Therefore, 
in~\autoref{impl right to left} it is sufficient to assume that $P$ is countably generated and pure in $Q$ to conclude that $P$ is projective and $P\precsim Q$. 
Note also that this observation generalizes \cite[Corollary 2 of Theorem 10]{Field72RegRings} (see also \cite[Corollary 2.15]{goodearlvnrr}) to general rings. 
\end{rmk}

We are now in position to prove the main result of the paper. It provides a characterization of the order relation in $\mathrm{CP}(R)$ and shows that over rings of stable rank one, Cuntz equivalence of countably generated projective modules coincides with isomorphism. Parts of the proof follow the strategy of the analogous result of Coward, Elliott, and Ivanescu for Hilbert $C^*$-modules, see \cite[Theorem 3]{CowEllIva08CuInv}. For a more comprehensive treatment, see also \cite[Theorem 4.29]{AraPerTom11Cu}. 

Recall that if $f \colon A \rightarrow B$ is a map and $C$ is a subset of $B$ containing $f(A)$, the map $f^{|C} \colon A \rightarrow C$ given by $f^{|C}(a) = f(a)$ is called the corestriction of $f$ to $C$.

\begin{thm} \label{main theorem}
Let $R$ be an arbitrary ring with stable rank one and $P, Q \in \mathcal{CP}(R)$. Then 
\begin{enumerate}[label=\rm{(\roman*)}]
\item $P \prec Q$ if and only if $P \cong P' \leq_{\mathrm{pure}}Q$ and $P'$ is contained in a finitely generated submodule of $Q$. 
\item $[P] \leq [Q]$ in $\mathrm{CP}(R)$ if and only if $P$ is isomorphic to a pure submodule of $Q$.   
\item $[P] = [Q]$ in $\mathrm{CP}(R)$ if and only if $P \cong Q$. 
\end{enumerate}
\end{thm}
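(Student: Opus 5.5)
The plan is to get the easy implications from earlier results and the hard ones from an approximate-intertwining construction modeled on \cite[Theorem 3]{CowEllIva08CuInv}, with \autoref{lema aprox} replacing the analytic unitary perturbations used there. Throughout I work with $R^+$-modules and use that $K(Q)$ has stable rank one (\autoref{k(p) te rang estable 1}).

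The implications ``$\Leftarrow$'' in (i) and (ii) are exactly \autoref{impl right to left}, applied to $P'$ and transported along $P\cong P'$. In (iii), $P\cong Q$ trivially gives $P\sim Q$.

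For ``$\Rightarrow$'' in (ii), I would use \autoref{torre d'un projectiu en f.g. compactament continguts} to write $P=\bigcup_n P_n$ with $P_n$ finitely generated and $P_n\cc P_{n+1}$. Applying $P\precsim Q$ to $X=P_{n+1}$ gives $\phi_n\colon P_{n+1}\to Q$ and $\psi_n\colon Q\to P$ with $\psi_n\phi_n=\mathrm{id}$. In particular $\phi_n$ is injective, and by \autoref{lema ucircphi CC ucircphi} we have $\phi_n(P_n)\cc\phi_n(P_{n+1})$ and $\phi_{n+1}(P_n)\cc\phi_{n+1}(P_{n+1})$.

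The key step is to make the $\phi_n$ compatible by induction. Apply \autoref{lema aprox} in $Q$ with
\[
A=\phi_n(P_{n+1}),\quad A_1=\phi_n(P_n),\quad B=\phi_{n+1}(P_{n+1}),\quad B_1=\phi_{n+1}(P_n),
\]
and with the isomorphism $\varphi=\phi_{n+1}\circ(\phi_n^{|A})^{-1}\colon A\to B$, which satisfies $\varphi(A_1)=B_1$. Take $Y=Q$ for (ii). This yields $u\in\mathrm{Aut}(Q)$ with $u\phi_{n+1}=\phi_n$ on $P_n$. Replacing $\phi_{n+1}$ by $u\phi_{n+1}$ and $\psi_{n+1}$ by $\psi_{n+1}u^{-1}$ keeps $\psi_{n+1}\phi_{n+1}=\mathrm{id}_{P_{n+1}}$.

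After these replacements, define $\phi_\infty\colon P\to Q$ as the union of the maps $\phi_n|_{P_n}$. It is an injective homomorphism, so $P\cong P':=\phi_\infty(P)$. For purity, take a finitely generated $X\subseteq P'$. Then $X\subseteq\phi_n(P_n)$ for some $n$, and $\sigma:=\phi_\infty\circ\psi_n\colon Q\to P'$ satisfies
\[
\sigma(\phi_n(p))=\phi_\infty(p)=\phi_n(p)\quad\text{for } p\in P_n.
\]
So the inclusion $P'\hookrightarrow Q$ is locally split, and $P'$ is pure by \autoref{pgr:pure}.

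For (i) I would run the same argument, but choose every $\phi_n$ with image in the fixed finitely generated $Y$ witnessing $P\prec_Y Q$, and apply \autoref{lema aprox} with this $Y$. Since $u(Y)\subseteq Y$, the corrected $\phi_{n+1}$ still lands in $Y$, hence $P'\subseteq Y$.

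For (iii) I would do a two-sided (back-and-forth) intertwining. Filter both modules as $P=\bigcup P_n$ and $Q=\bigcup Q_n$ with compact containments, and alternately construct maps $\alpha_n\colon P_{k_n}\to Q$ and $\beta_n\colon Q_{l_n}\to P$. Each new map is corrected by an automorphism from \autoref{lema aprox}, applied in $Q$ or in $P$ as appropriate, so that $\beta_n\alpha_n=\mathrm{id}$ on $P_{k_n}$, $\alpha_{n+1}\beta_n=\mathrm{id}$ on $Q_{l_n}$, and consecutive $\alpha$'s (resp.\ $\beta$'s) agree on the earlier pieces. The unions then give mutually inverse maps $P\leftrightarrows Q$. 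I expect this bookkeeping to be the main obstacle: choosing the indices $k_n,l_n$ so that the images land inside the next filtration piece, and checking that correcting $\alpha_{n+1}$ by $u$ does not break the relation already secured with $\beta_n$. I would handle the latter by applying \autoref{lema aprox} with $A_1$ equal to the image of the piece on which agreement is required.
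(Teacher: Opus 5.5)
Your arguments for (i) and (ii) match the paper's, up to an index shift. You use the same application of \autoref{lema aprox} to $\varphi=\phi_{n+1}\circ\phi_n^{-1}$ on $\phi_n(P_{n+1})$, the same splitting map $\phi_\infty\circ\psi_n$ for purity, and the same use of $u(Y)\subseteq Y$.

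Part (iii), however, is only an outline. You list the target relations but never say which isomorphism is fed to \autoref{lema aprox}, and the one hint you give points the wrong way. Your wording (correcting $\alpha_{n+1}$ by $u$ without breaking a relation ``already secured'' with $\beta_n$, with $A_1$ the image of the piece where agreement is required) describes the (ii)-type move $\varphi=\tilde\alpha_{n+1}\circ\alpha_n^{-1}$. That move only makes consecutive $\alpha$'s agree; it never links the $\alpha$'s to the $\beta$'s. Two independent intertwinings of this kind just produce pure embeddings $P\to Q$ and $Q\to P$, which need not be mutually inverse (think of the shift on $R^{(\mathbb N)}$). Also, \autoref{lema aprox} gives the identity only on $A_1\cc A$. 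So $\beta_n\alpha_n=\mathrm{id}$ cannot hold on the whole domain $P_{k_n}$ of $\alpha_n$, and the indices have to be staggered.

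The missing idea is to feed \autoref{lema aprox} the composite of the fresh map with the previously corrected map going the other way. The inverse relation is then what the correction produces, not something that must be protected. Concretely, pass to subsequences of the filtrations (this preserves $\cc$) and suppose $\alpha_n\colon P_{4n}\to Q_{4n+1}$ has been built.
\begin{itemize}
\item Take a fresh $\tilde\beta_n\colon Q_{4n+2}\to P$ from $Q\precsim P$. Apply \autoref{lema aprox} in $P$ (with $Y=P$) to $\varphi=\tilde\beta_n\circ\alpha_n$, with $A=P_{4n}$, $A_1=P_{4n-1}$, $B=\varphi(A)$, $B_1=\varphi(A_1)$.
\item Here $B_1\cc B$ holds by \autoref{lema ucircphi CC ucircphi}, since $\varphi$ has a left inverse $P\to P$ built from those of $\tilde\beta_n$ and $\alpha_n$.
\item This gives $v$ such that $\beta_n:=v\tilde\beta_n$, corestricted to some $P_{4n+3}$, satisfies $\beta_n\alpha_n=\mathrm{id}$ on $P_{4n-1}$.
\item Symmetrically, applying \autoref{lema aprox} in $Q$ to $\tilde\alpha_{n+1}\circ\beta_n$ with $A=Q_{4n+2}$, $A_1=Q_{4n+1}$ gives $\alpha_{n+1}\beta_n=\mathrm{id}$ on $Q_{4n+1}$.
\end{itemize}
Each correction touches only the new map, so no earlier relation can be broken. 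Agreement of consecutive maps is then automatic:
\[
\alpha_{n+1}|_{P_{4n-1}}=\alpha_{n+1}\beta_n\alpha_n|_{P_{4n-1}}=\alpha_n|_{P_{4n-1}},\qquad
\beta_{n+1}|_{Q_{4n+1}}=\beta_{n+1}\alpha_{n+1}\beta_n|_{Q_{4n+1}}=\beta_n|_{Q_{4n+1}}.
\]
This uses $\alpha_n(P_{4n-1})\subseteq Q_{4n+1}$ and $\beta_n(Q_{4n+1})\subseteq P_{4n+3}$, and the resulting unions are mutually inverse. This is precisely the paper's proof of (iii); its $\phi_n,\alpha_n$ are your $\alpha_n,\beta_n$.
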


\begin{proof}
Write $P$ as a countable union of finitely generated submodules, $P = \bigcup_{i \geq 1} P_i$, such that $P_i \subset\!\subset P_{i+1}$ for all $i \geq 1$; see~\autoref{torre d'un projectiu en f.g. compactament continguts}. Similarly, write $Q = \bigcup_{i \geq 1} Q_i$ where each $Q_i$ is a finitely generated module and $Q_i \subset\!\subset Q_{i+1}$ for all $i \geq 1$. For notational convenience, we also assume that $P_0 = Q_0 = 0$. 

(i): The implication from right to left is a consequence of~\autoref{impl right to left}. For the converse implication, suppose that $P \prec_Y Q$, where $Y$ is a finitely generated submodule of $Q$. We will show that $P$ is isomorphic to a pure submodule $P'$ of $Q$ and that $P'$ is contained in $Y$. 

Since $P \prec_Y Q$ and, for each $i \geq 1$, $P_i$ is a finitely generated submodule of $P$, there exist module homomorphisms $\tilde{\phi}_i \colon P_i \rightarrow Q$, $\tilde{\psi}_i\colon Q \rightarrow P$ such that $\tilde{\psi}_i \circ \tilde{\phi}_i = \mathrm{id}_{P_i}$ and $\tilde{\phi}_i(P_i) \subseteq Y$. 



Now, we are going to construct inductively new morphisms $\phi_i \colon P_i \rightarrow Q$ for $i \geq 2$, modifying the old ones $\tilde{\phi}_i$ via $R^+$-automorphisms of $Q$. For each $i \geq 2$, $\phi_i$ will be of the form $u_i \circ \tilde{\phi}_i$ for some element $u_i \in \mathrm{Aut}_{R^+}(Q)$, with $u_i(Y)\subseteq Y$, and will satisfy that $\phi_{i}(P_i) \subseteq Y$
and that for all $x \in P_{i-2}$,
\begin{equation*} 
    \phi_{i}(x) = \phi_{i-1}(x).
\end{equation*}

Start by setting $\phi_1 = \tilde{\phi}_1$ and $\phi_2 = \tilde{\phi}_2$ (with $u_i=1$ for $i=1,2$), which satisfy the required conditions. Suppose that $\phi_1, \dots, \phi_n$ have been constructed for $n \geq 2$ and let us construct $\phi_{n+1}$. Since $\phi_n$ is of the form $u_n \circ \tilde{\phi}_n$ for some $u_n \in \mathrm{Aut}_{R^+}(Q)$ and $\tilde{\phi}_n$ is injective, we see that $\phi_n$ is also injective, so the map $$\varphi \coloneqq \tilde{\phi}_{n+1}\circ\phi_{n}^{-1} \colon \phi_n(P_n) \rightarrow \tilde{\phi}_{n+1}(P_n)$$ is an isomorphism. Set $A \coloneqq \phi_n(P_n)$ and $B \coloneqq \tilde{\phi}_{n+1}(P_n)$, which are submodules of $Q$, and let $A_1 \coloneqq \phi_n(P_{n-1}) \subseteq \phi_n(P_n) = A$. By~\autoref{lema ucircphi CC ucircphi} and since $P_{n-1} \subset\!\subset P_n$, we have $\tilde{\phi}_n(P_{n-1}) \subset\!\subset \tilde{\phi}_{n}(P_n)$, and by~\autoref{auxiliar lemma cc}, also $u_n \circ \tilde{\phi}_n(P_{n-1}) \subset\!\subset u_n \circ \tilde{\phi}_{n}(P_n)$, since $u_n$ is invertible. Hence, $A_1 \subset\!\subset A$. Now, define $B_1 \coloneqq \tilde{\phi}_{n+1}(P_{n-1})$. Observe that $\varphi(A_1) = \tilde{\phi}_{n+1}(\phi_{n}^{-1}(\phi_n(P_{n-1}))) = \tilde{\phi}_{n+1}(P_{n-1}) = B_1$. Moreover, by~\autoref{lema ucircphi CC ucircphi} again, $\tilde{\phi}_{n+1}(P_{n-1}) \subset\!\subset \tilde{\phi}_{n+1}(P_{n})$, so $B_1 \subset\!\subset B$. Moreover, $\phi_n(P_n) \subseteq Y$ by induction hypothesis, and $\tilde{\phi}_{n+1}(P_n) \subseteq Y$, that is, $A$ and $B$ are contained in the submodule $Y$. Hence, we may apply~\autoref{lema aprox} to $\varphi \colon A \rightarrow B$, $A_1 \subset\!\subset A$, $B_1  \subset\!\subset B$, $A, B \subseteq Y \subseteq Q$, so there exists an automorphism $u_{n+1} \in \mathrm{Aut}_{R^+}(Q)$ such that $u_{n+1}(Y)\subseteq Y$ and
\begin{equation} \label{equacio 2 teorema}
u_{n+1} \circ \tilde{\phi}_{n+1}\circ{\phi_{n}^{-1}}_{|\phi_n(P_{n-1})} = \mathrm{id}_{\phi_n(P_{n-1})}.    
\end{equation}
If $x \in P_{n-1}$ and $y \coloneqq \phi_n(x)$, by equation (\ref{equacio 2 teorema}) we have
\begin{equation*}
u_{n+1} \circ \tilde{\phi}_{n+1}(x) = u_{n+1} \circ \tilde{\phi}_{n+1}(\phi_n^{-1}(y)) = y = \phi_n(x).
\end{equation*} 
Hence, if we define $\phi_{n+1} \coloneqq u_{n+1} \circ \tilde{\phi}_{n+1}$, then for all $x \in P_{n-1}$, $\phi_{n+1}(x) = \phi_n(x)$ and also 
$$\phi_{n+1}(P_{n+1}) =u_{n+1}(\tilde{\phi}_{n+1}(P_{n+1}))
\subseteq u_{n+1}(Y)\subseteq Y,$$ 
as we wanted to prove.

Once the new homomorphisms $\phi_i$ have been constructed for all $i \geq 2$, we define a global homomorphism $\phi \colon P \rightarrow Q$ as follows. If $x \in P_i$, set $\phi(x) \coloneqq \phi_{i+1}(x)$. Observe that, if $x \in P_i$, $\phi_{i+2}(x) = \phi_{i+1}(x)$, so for all $j \geq i$, $\phi_{j+1}(x) = \phi(x)$. Hence, $\phi$ is a well-defined homomorphism. Since $\phi_i$ is injective for all $i \geq 1$, $\phi$ is also injective, so $P$ is isomorphic to the submodule $\phi(P)$ of $Q$. Moreover, since for all $i \geq 2$, $\phi_i(P_i)  \subseteq Y$, $\phi(P)$ is also a submodule of $Y$. 

Finally, we need to show that $\phi(P)$ is a pure submodule of $Q$. Consider a finitely generated submodule $X$ of $\phi(P)$. Since
\begin{equation*}
    \phi(P) = \phi\left(\bigcup_{i\geq1} P_i\right) =  \bigcup_{i\geq 1} \phi_{i+1}(P_i),
\end{equation*}
and $\phi_{i+1}(P_i) \subseteq\phi_{i+2}(P_{i+1})$, there exists $n$ such that $X \subseteq \phi_{n+1}(P_n)$. Recall that $\phi_{n+1} = u_{n+1} \circ \tilde{\phi}_{n+1}$ for some $u_{n+1}$ in $\mathrm{Aut}_{R^+}(Q)$. Now, set $\sigma \coloneqq \phi \circ \tilde{\psi}_{n+1}\circ u_{n+1}^{-1} \colon Q \rightarrow \phi(P)$. Then if $x \in X$, there is some $y \in P_n$ such that $x = \phi_{n+1}(y)$, and
\begin{equation*}
    \sigma(x) = \sigma\circ\phi_{n+1}(y) = \phi \circ \tilde{\psi}_{n+1}\circ u_{n+1}^{-1} \circ u_{n+1} \circ \tilde{\phi}_{n+1} (y) = \phi(y) = \phi_{n+1}(y) = x,
\end{equation*}
so $\sigma_{|X} = \mathrm{id}_{X}$. Since $\iota \colon \phi(P) \hookrightarrow Q$ is locally split, we conclude that $\phi(P)$ is pure in $Q$.

(ii): The right-to-left implication follows from~\autoref{impl right to left}, and the left-to-right implication is a consequence of the proof of part (i). Indeed, we did not use in (i) that $Y$ is a finitely generated submodule of $Q$, and exactly the same proof holds for an arbitrary submodule $Y$ of $Q$ (in particular for $Y=Q$). 

(iii): The implication from right to left is clear. For the converse, assume $[P] = [Q]$. Similarly to (i), we will construct inductively injective homomorphisms 
\begin{equation*}
    \phi_n \colon P_{4n} \rightarrow Q_{4n + 1} \quad \text{ and } \quad \alpha_n \colon Q_{4n + 2} \rightarrow P_{4n + 3}
\end{equation*}
for each $n \geq 1$. Also, for each $n$, we will have auxiliary homomorphisms $\tilde{\phi}_n \colon P_{4n} \rightarrow Q$ and $\tilde{\psi}_n \colon Q \rightarrow P$ and an $R^+$-automorphism $u_n$ of $Q$ such that $\tilde{\psi}_n \circ \tilde{\phi}_n = \iota_{P_{4n}}$, the inclusion of $P_{4n}$ into $P$, and $\phi_n$ will be the corestriction of $u_n \circ \tilde{\phi_n}$ to $Q_{4n + 1}$. Similarly, for each $n$, we will have homomorphisms $\tilde{\alpha}_n \colon Q_{4n+2} \rightarrow P$ and $\tilde{\beta}_n \colon P \rightarrow Q$ and an $R^+$-automorphism $v_n$ of $P$ such that $\tilde{\beta}_n \circ \tilde{\alpha}_n = \iota_{Q_{4n +2}}$, the inclusion of $Q_{4n+2}$ in $Q$, and $\alpha_n$ will be the corestriction of $v_n \circ \tilde{\alpha_n}$ to $P_{4n + 3}$. 

Start with $n = 1$. Since $P_4$ is a finitely generated submodule of $P$ and $P \precsim Q$, there exist module homomorphisms $\tilde{\phi}_1 \colon P_4 \rightarrow Q$ and $\tilde{\psi}_1 \colon Q \rightarrow P$ such that $\tilde{\psi}_1 \circ \tilde{\phi}_1 = \iota_{P_4}$. Since $\tilde{\phi}_1(P_4)$ is finitely generated, we may assume $\tilde{\phi}_1(P_4) \subseteq Q_5$. Define $\phi_1 \coloneqq \tilde{\phi}_1^{|Q_5}$, which satisfies the desired conditions (with $u_1=1$). 

Suppose that $\phi_1, \alpha_1, \phi_2, \dots, \phi_{n-1}, \alpha_{n-1}$ and $\phi_{n}$ and the corresponding auxiliary maps have been constructed, and let us construct $\alpha_{n}$ and $\phi_{n+1}$. We start with $\alpha_n$. Since $Q_{4n + 2}$ is a finitely generated submodule of $Q$ and $Q \precsim P$, there exist module homomorphisms $\tilde{\alpha}_n \colon Q_{4n + 2} \rightarrow P$ and $\tilde{\beta}_n \colon P \rightarrow Q$ such that $\tilde{\beta}_n \circ \tilde{\alpha}_n = \mathrm{id}_{Q_{4n + 2}}$. So far, the already constructed homomorphisms $\phi_i$ and $\alpha_i$ use the submodules $P_i$ with $i \leq 4n$, so we can assume, omitting possibly some indices, that $\tilde{\alpha}_n(Q_{4n + 2}) \subseteq P_{4n + 2}$. Similar to (i), we consider the isomorphism 
\begin{equation*}
    \tilde{\alpha}_n\circ\phi_n \colon P_{4n} \rightarrow (\tilde{\alpha}_n\circ\phi_n)(P_{4n}).
\end{equation*}

By construction, there are homomorphisms $\tilde{\phi}_n \colon P_{4n} \rightarrow Q$ and $\tilde{\psi}_n \colon Q \rightarrow P$ and an automorphism $u_n \in \mathrm{Aut}_{R^+}(Q)$ such that $\tilde{\psi}_n \circ \tilde{\phi}_n = \iota_{P_{4n}}$ and $\phi_n = (u_n \circ \tilde{\phi_n})^{|Q_{4n + 1}}$. Using~\autoref{lema ucircphi CC ucircphi} and~\autoref{auxiliar lemma cc} and since $P_{4n - 1} \subset\!\subset P_{4n}$, we have that $$\tilde{\alpha}_n\circ u_n \circ \tilde{\phi_n}(P_{4n - 1}) \subset\!\subset \tilde{\alpha}_n\circ u_n \circ \tilde{\phi_n}(P_{4n}).$$ 
Set $A_1 \coloneqq P_{4n - 1}$, $A \coloneqq P_{4n}$, $B_1 \coloneqq \tilde{\alpha}_n\circ\phi_n(P_{4n - 1})$, $B \coloneqq \tilde{\alpha}_n\circ\phi_n(P_{4n})$ and $\varphi \coloneqq \tilde{\alpha}_n\circ\phi_n \colon A \rightarrow B$. Then $\varphi$ is an isomorphism, $A_1 \cc A$, $B_1 \cc B$ and $\varphi(A_1) = B_1$, so using~\autoref{lema aprox}, there exists $v_n \in \mathrm{Aut}_{R^+}(P)$ such that
\begin{equation} \label{eq1 main teo}
{v_n \circ \tilde{\alpha}_n \circ \phi_n}_{|P_{4n-1}} = \mathrm{id}_{P_{4n - 1}}. 
\end{equation}
Now, since $v_n \circ \tilde{\alpha}_n(Q_{4n+2}) \subseteq P$ is finitely generated, we may assume, omitting possibly some indices, that it is contained in $P_{4n + 3}$. Then, we define $\alpha_n \coloneqq (v_n \circ \tilde{\alpha}_n)^{|P_{4n + 3}}$. Hence, $\alpha_n \colon Q_{4n + 2} \to P_{4n + 3}$ is an injective homomorphism of the desired form. 

To construct $\phi_{n+1}$, we proceed similarly. Since $P_{4n + 4}$ is a finitely generated submodule of $P$ and $P \precsim Q$, there exist module homomorphisms $\tiphi_{n+1} \colon P_{4n + 4} \rightarrow Q$ and $\tipsi_{n+1} \colon Q \rightarrow P$ such that $\tipsi_{n+1} \circ \tiphi_{n+1} = \iota_{P_{4n+4}}$. Since $\tiphi_{n+1}(P_{4n+4})$ is finitely generated, we can assume that it is contained in $Q_{4n+4}$. Now, consider the isomorphism 
\begin{equation*}
    \tiphi_{n+1} \circ \alpha_n \colon Q_{4n+2} \rightarrow \tiphi_{n+1} \circ \alpha_n(Q_{4n + 2}).
\end{equation*}
Applying~\autoref{lema ucircphi CC ucircphi} and~\autoref{auxiliar lemma cc}, together with the fact that $Q_{4n + 1} \cc Q_{4n + 2}$, we obtain $\tiphi_{n+1}\circ\alpha_n(Q_{4n + 1}) \cc \tiphi_{n+1}\circ\alpha_n(Q_{4n + 2})$. Hence,~\autoref{lema aprox} provides the  existence of an element $u_{n+1} \in \mathrm{Aut}_{R^+}(Q)$ such that 
\begin{equation} \label{eq2 main teo}
    {u_{n+1}\circ \tiphi_{n+1} \circ \alpha_n}_{|Q_{4n+1}} = \mathrm{id}_{Q_{4n+1}}.
\end{equation}
Since $u_{n+1} \circ \tiphi_{n+1}(P_{4n+4}) \subseteq Q$ is finitely generated, we may assume, omitting possibly some indices, that it is contained in $Q_{4n + 5}$. We define $\phi_{n+1} \coloneqq (u_{n+1} \circ \tiphi_{n+1})^{|Q_{4n + 5}}$. Then, $\phi_{n+1} \colon P_{4n + 4} \to Q_{4n + 5}$ is an injective homomorphism as required. 

We summarize the situation in the following (not necessarily commutative) diagram:
\[
\begin{tikzcd}[column sep=3.5em, row sep=4em]
\cdots \arrow[r, ""] &  P_{4n+2} \arrow[r, "\cc"] & P_{4n+3} \arrow[r, "\cc"] & P_{4n + 4} \arrow[d, "\tiphi_{n+1}^{|Q_{4n + 4}}"] \arrow[dr, "\phi_{n+1}"] & & \\
\cdots \arrow[r, ""] & Q_{4n+2} \arrow[u, "\tialpha_{n}^{|P_{4n + 2}}"] \arrow[ur, "\alpha_{n}"] &  & Q_{4n + 4} \arrow[r, "\cc"] & Q_{4n + 5} \arrow[r, "\cc"] & Q_{4n + 6}
\end{tikzcd}
\] 

To complete the proof, we construct homomorphisms $\phi \colon P \to Q$ and $\alpha \colon Q \to P$ from the previously defined 
$\phi_i$'s and $\alpha_i$'s, and show that
\begin{equation*}
 \phi \circ \alpha = \mathrm{id}_Q \quad \text{and} \quad
\alpha \circ \phi = \mathrm{id}_P.
\end{equation*}

From equations (\ref{eq1 main teo}) and (\ref{eq2 main teo}) we have that 
\begin{equation*}
{\alpha_n \circ \phi_n}_{|P_{4n - 1}} = \mathrm{id}_{P_{4n - 1}} \quad \text{and} \quad {\phi_{n+1} \circ \alpha_n}_{|Q_{4n + 1}} = \mathrm{id}_{Q_{4n + 1}}, 
\end{equation*}
and then, 
\begin{equation*}
    {\phi_{n+1}}_{|P_{4n - 1}} = {\phi_{n+1}\circ\alpha_n\circ\phi_n}_{|P_{4n - 1}} = {\phi_n}_{|P_{4n - 1}}.
\end{equation*}
Hence, similar to (i), the map $\phi \colon P \rightarrow Q$ given by $\phi(x) = \phi_{n}(x)$ if $x \in P_{4n - 1}$ is a well-defined homomorphism. Analogously, we have that
\begin{equation*}
    {\alpha_{n+1}}_{|Q_{4n + 1}} = {\alpha_{n+1}\phi_{n+1}\alpha_n}_{|Q_{4n + 1}} = {\alpha_{n}}_{|Q_{4n + 1}},
\end{equation*}
so the map $\alpha \colon Q \rightarrow P$ given by $\alpha(y) = \alpha_{n}(y)$ for $y \in Q_{4n+1}$ is a well-defined homomorphism. 

Now, if $x \in P_{4n -1}$, $\phi(x) = \phi_{n}(x) \in Q_{4n + 1}$, so $\alpha\circ\phi(x) = \alpha_n(\phi_n(x)) = x$. Hence, $\alpha \circ \phi = \mathrm{id}_{P}$. Similarly, one obtains that  $\phi \circ \alpha = \mathrm{id}_Q$. This shows $P \cong Q$ and concludes the proof. 
\end{proof}

\section{Cancellation properties}

Stable rank one is closely related to cancellation properties of projective modules. In this section, we prove cancellation results, some of which involve applications of \autoref{main theorem}.

Although our primary interest is the relation $\precsim$ on countably generated projective modules, note that Cuntz comparison, as we have defined, extends naturally to arbitrary modules. More precisely, given right $R$-modules $P$ and $Q$, we write $P \precsim Q$ if, for every finitely generated submodule $X$ of $P$, there exist module homomorphisms $\phi\colon X \rightarrow Q$ and $\psi\colon Q \rightarrow P$ such that $\psi\circ\phi = \mathrm{id}_X$. When restricted to $\mathcal{CP}(R)$, this obviously agrees with the original definition. Similarly, we may extend the definition of the relation $\prec$ to arbitrary right $R$-modules. The following theorem is in the spirit of Evans' classical cancellation theorem, see \cite[Theorem 2]{Evans1973}.

\begin{thm} \label{cancelacio fg}
Let $R$ be an arbitrary ring, $P$ and $Q$ be right $R$-modules and $T$ be a finitely generated right $R$-module with $\mathrm{sr}(\mathrm{End}_R(T)) = 1$. If 
\begin{equation*}
    P \oplus T \precsim Q \oplus T
\end{equation*}
then $P \precsim Q$. 
\end{thm}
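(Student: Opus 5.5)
We fix a finitely generated submodule $X\subseteq P$ and produce maps $X\to Q\to P$ composing to the inclusion. The idea is to apply the hypothesis to the finitely generated submodule $X\oplus T$ of $P\oplus T$, and then use stable rank one of $\mathrm{End}_R(T)$ to clear the $T$-coordinate. The hypothesis gives homomorphisms $\phi\colon X\oplus T\to Q\oplus T$ and $\psi\colon Q\oplus T\to P\oplus T$ with $\psi\circ\phi=\mathrm{id}_{X\oplus T}$. We write both as $2\times 2$ matrices of homomorphisms,
\[
\phi=\begin{pmatrix}\phi_{11}&\phi_{12}\\ \phi_{21}&\phi_{22}\end{pmatrix},\qquad
\psi=\begin{pmatrix}\psi_{11}&\psi_{12}\\ \psi_{21}&\psi_{22}\end{pmatrix},
\]
where for instance $\phi_{12}\colon T\to Q$, $\phi_{22}\colon T\to T$, $\psi_{21}\colon Q\to T$ and $\psi_{22}\colon T\to T$.

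\emph{Step 1: stable rank one in the corner $T\to T$.} Reading off the $(T,T)$-entry of $\psi\phi=\mathrm{id}$ gives $\psi_{21}\phi_{12}+\psi_{22}\phi_{22}=\mathrm{id}_T$ in $\mathrm{End}_R(T)$. Set $a=\phi_{22}$, $x=\psi_{22}$ and $b=\psi_{21}\phi_{12}$, so that $xa+b=1$. Since $\mathrm{sr}(\mathrm{End}_R(T))=1$, there is $y\in\mathrm{End}_R(T)$ with $w:=\phi_{22}+y\psi_{21}\phi_{12}$ invertible.

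We realize this by an automorphism of $Q\oplus T$, namely $\alpha=\begin{pmatrix}1&0\\ y\psi_{21}&1\end{pmatrix}$, whose inverse is $\begin{pmatrix}1&0\\ -y\psi_{21}&1\end{pmatrix}$. Put $\phi'=\alpha\phi$ and $\psi'=\psi\alpha^{-1}$. Then $\psi'\phi'=\mathrm{id}$, and the $(T,T)$-entry of $\phi'$ is $w$.

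\emph{Step 2: elimination on the target side only.} Let $\beta=\begin{pmatrix}1&-\phi'_{12}w^{-1}\\ 0&1\end{pmatrix}$, an automorphism of $Q\oplus T$. Put $\phi''=\beta\phi'$ and $\psi''=\psi'\beta^{-1}$. Then $\psi''\phi''=\mathrm{id}_{X\oplus T}$, and $\phi''$ has $(Q,T)$-entry $0$ and $(T,T)$-entry $w$.

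One might try to clear the $(T,X)$-entry $\phi''_{21}$ by also column-reducing on the domain. This is the main obstacle: that operation would require extending $w^{-1}\phi''_{21}\colon X\to T$ to a map $P\to T$, which need not exist. So we do not column-reduce, and instead compute directly.

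For $t\in T$ we have $\phi''(0,t)=(0,wt)$, hence $\psi''(0,wt)=(0,t)$. Replacing $t$ by $w^{-1}s$ gives $\psi''(0,s)=(0,w^{-1}s)$ for all $s\in T$. For $x\in X$ we have $\phi''(x,0)=(\phi''_{11}x,\phi''_{21}x)$ and $\psi''\phi''(x,0)=(x,0)$. Subtracting the previous identity with $s=\phi''_{21}x$, we get
\[
\psi''(\phi''_{11}x,0)=(x,-w^{-1}\phi''_{21}x).
\]

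\emph{Step 3: conclusion.} Define $\phi_X:=\phi''_{11}\colon X\to Q$ and $\psi_X:=\pi_P\circ\psi''\circ\iota_Q\colon Q\to P$, where $\iota_Q$ is the inclusion of $Q$ into $Q\oplus T$ and $\pi_P$ is the projection of $P\oplus T$ onto $P$. By the last display, $\psi_X\circ\phi_X=\mathrm{id}_X$. Since $X$ was an arbitrary finitely generated submodule of $P$, this gives $P\precsim Q$.

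Only matrix manipulations of homomorphisms between direct sums are used, so nothing depends on $R$ being unital or on $P$, $Q$ being projective.
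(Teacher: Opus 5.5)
Your proof is correct and follows essentially the paper's argument. You use the same stable-rank-one step with the same lower-triangular automorphism of $Q\oplus T$ to make the $(T,T)$-entry invertible, and your resulting maps $\phi''_{11}=\phi'_{11}-\phi'_{12}w^{-1}\phi'_{21}$ and $\pi_P\psi''\iota_Q=\psi'_{11}$ coincide exactly with the paper's factorization $\iota_X=\psi_1(\phi_1-\phi_2\phi_4^{-1}\phi_3)$. The difference is only in the verification: you use the extra elementary automorphism $\beta$ and an element-wise computation, whereas the paper solves the $(P,T)$-entry identity for $\psi_2=-\psi_1\phi_2\phi_4^{-1}$ and substitutes.
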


\begin{proof}
Let $X$ be a finitely generated submodule of $P$. Then $X \oplus T$ is a finitely generated submodule of $P \oplus T$, so there exist module homomorphisms 
\begin{equation*}
    \phi \colon X \oplus T \rightarrow Q \oplus T, \quad \psi \colon Q \oplus T \rightarrow P \oplus T
\end{equation*}
with $\psi \circ \phi = \iota_{X} \oplus \mathrm{id}_{T}$, where $\iota_X \colon X \rightarrow P$ is the natural inclusion and $\mathrm{id}_T$ the identity map of $T$. Write $\phi$ and $\psi$ as $2 \times 2$ matrices 
\begin{equation*}
    \phi = \begin{pmatrix}
        \phi_1 & \phi_2 \\
        \phi_3 & \phi_4 
    \end{pmatrix}, \quad \psi = \begin{pmatrix}
        \psi_1 & \psi_2 \\
        \psi_3 & \psi_4 
    \end{pmatrix},
\end{equation*}
where $\phi_1\colon X \rightarrow Q$, $\phi_2\colon T \rightarrow Q$, $\phi_3\colon X \rightarrow T$, $\phi_4\colon T \rightarrow T$, $\psi_1\colon Q \rightarrow P$, $\psi_2\colon T \rightarrow P$, $\psi_3\colon Q \rightarrow T$, and $\psi_4\colon T \rightarrow T$. The condition $\psi \circ \phi = \iota_{X} \oplus \mathrm{id}_{T}$ gives
the following four identities
\begin{align} \label{eq1}
    \psi_1\phi_1 + \psi_2\phi_3 & = \iota_{X} \\
    \psi_3\phi_1 + \psi_4\phi_3 & = 0 \\ \label{eq3}
    \psi_1\phi_2 + \psi_2\phi_4 & = 0 \\ \label{eq4}
    \psi_3\phi_2 + \psi_4\phi_4 & = \mathrm{id}_{T} 
\end{align}
We now show that we can modify $\phi$ and $\psi$ so that $\phi_4$ is invertible. Set $x \coloneqq \psi_4, a \coloneqq \phi_4$ and $b \coloneqq \psi_3\phi_2$. Then $a, b, x \in \mathrm{End}_R(T)$ and from equation (\ref{eq4}) we have $xa + b = 1$, where 1 is the identity map on $T$. Since $\mathrm{sr}(\mathrm{End}_R(T)) = 1$, there exists $y \in \mathrm{End}_R(T)$ such that $a + yb$ is invertible in $\mathrm{End}_R(T)$. Define an automorphism $\tau \in \mathrm{Aut}_R(Q \oplus T)$ given by 
\begin{equation*}
    \tau = \begin{pmatrix}
        \mathrm{id}_Q & 0 \\
        y\psi_3 & \mathrm{id}_T 
    \end{pmatrix}, \quad \tau^{-1} =  \begin{pmatrix}
        \mathrm{id}_Q & 0 \\
        -y\psi_3 & \mathrm{id}_T
    \end{pmatrix}. 
\end{equation*}
Now replace $\phi$ by $\phi' \coloneqq \tau \circ \phi$ and $\psi$ by $\psi' \coloneqq \psi \circ \tau^{-1}$. Then $\psi' \circ \phi' = \iota_{X} \oplus \mathrm{id}_T$ still holds and 
\begin{equation*}
    \phi' = \begin{pmatrix}
        \phi_1 & \phi_2 \\
        y\psi_3\phi_1 + \phi_3 & y\psi_3\phi_2 + \phi_4 
    \end{pmatrix}.
\end{equation*}
The lower right corner of $\phi'$ is $\phi_4 + y\psi_3\phi_2 = a + yb$ which is invertible, as we wanted to prove. 

Hence, we may assume that $\phi_4$ is invertible in the original equations (\ref{eq1})-(\ref{eq4}). Now, from equation (\ref{eq3}) we have $\psi_2 = - \psi_1\phi_2\phi_4^{-1}$. Substituting this into equation (\ref{eq1}) yields
\begin{equation*}
    \iota_X = \psi_1\phi_1 - \psi_1\phi_2\phi_4^{-1}\phi_3 = \psi_1(\phi_1 - \phi_2\phi_4^{-1}\phi_3).
\end{equation*}
Thus, the diagram
\begin{equation*}
\begin{tikzcd}[column sep=huge]
X \arrow[r, "\phi_1 - \phi_2\phi_4^{-1}\phi_3"] \arrow[rr, bend right=30, "\iota_X"'] & Q \arrow[r, "\psi_1"] & P
\end{tikzcd}
\end{equation*}
commutes. Since $X$ was an arbitrary finitely generated submodule of $P$, this proves that $P \precsim Q$.
\end{proof}

As an immediate consequence of the last proposition, we obtain the following cancellation result for the Cuntz semigroup of a ring with stable rank one. There is an analogous result for the Cuntz semigroup of a $C^*$-algebra of stable rank one that was established, with different techniques, by R\o rdam and Winter in \cite[Proposition 4.1]{RorWin10ZRevisited}. 

\begin{cor} \label{cancelacio cp(r) fg}
Let $R$ be a unital ring with stable rank one, let $P$ and $Q$ be right $R$-modules and let $T$ be a finitely generated projective right $R$-module. Then $P \oplus T \precsim Q \oplus T$ implies $P \precsim Q$.  
\end{cor}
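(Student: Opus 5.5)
The corollary follows from \autoref{cancelacio fg}; all we need is the hypothesis $\mathrm{sr}(\mathrm{End}_R(T)) = 1$ for $T$ finitely generated projective. Since $T$ is finitely generated, it is also finitely generated as an $R$-module in the sense of that theorem. We then apply \autoref{cancelacio fg} directly to $P$, $Q$ and $T$ (which are allowed there to be arbitrary right $R$-modules). So the work is to check that $\mathrm{End}_R(T)$ has stable rank one.

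Because $T$ is finitely generated projective, there are $n \geq 1$ and an idempotent $e \in M_n(R)$ with $T \cong eR^n$. This gives a ring isomorphism $\mathrm{End}_R(T) \cong eM_n(R)e$, via $f \mapsto f\circ e$ viewed inside $\mathrm{End}_R(R^n) \cong M_n(R)$; this is the finite-dimensional version of the identification in the proof of \autoref{k(p) te rang estable 1}. Since $\mathrm{sr}(R) = 1$, we have $\mathrm{sr}(M_n(R)) = 1$ by \cite[Theorem 3.6]{Vaserstein84StRangeCond}. Now apply \autoref{lema corner stable rank one} with the unital ring $M_n(R)$, its two-sided ideal $I = M_n(R)$, and the idempotent $e$; this gives $\mathrm{sr}(eM_n(R)e) = 1$. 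Alternatively, we can avoid matrices: by \autoref{k(r^n)} together with the fact that every endomorphism of a finitely generated projective module factors through some $R^n$, we get $K(T) = \mathrm{End}_R(T)$, and \autoref{k(p) te rang estable 1} (with $T \in \mathcal{CP}(R)$) gives stable rank one directly.

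Note that $eM_n(R)e$ is unital with unit $e$. The non-unital definition of stable rank one used in \autoref{lema corner stable rank one} therefore agrees with the usual unital one by \cite[Theorem 3.4]{Vaserstein84StRangeCond}, which is exactly the hypothesis needed in \autoref{cancelacio fg}. I do not expect a real obstacle. The only points needing care are this compatibility between the unital and non-unital notions of stable rank one, and the routine verification that $\mathrm{End}_R(eR^n) \cong eM_n(R)e$ is a ring isomorphism.
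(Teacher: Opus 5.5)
Your proposal is correct and follows the paper's proof: the corollary is \autoref{cancelacio fg} applied once one knows $\mathrm{sr}(\mathrm{End}_R(T))=1$ for finitely generated projective $T$. The paper takes that fact as standard, and your justification fills it in correctly, either via $\mathrm{End}_R(T)\cong eM_n(R)e$ and \autoref{lema corner stable rank one}, or via $K(T)=\mathrm{End}_R(T)$ and \autoref{k(p) te rang estable 1}.
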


\begin{proof}
This follows immediately from the fact that $T$ is finitely generated and projective, which implies $\mathrm{sr}(\mathrm{End}_R(T)) = 1$, together with~\autoref{cancelacio fg}.
\end{proof}

We note that our result also give an alternative proof of \cite[Proposition 4.2]{RorWin10ZRevisited}. We briefly recall the construction of the Cuntz semigroup of a C*-algebra $A$. Given positive elements $a,b\in A$, one says that $a$ is \emph{Cuntz subequivalent} to $b$, in symbols $a\precsim b$ if, for any $\varepsilon>0$ there is $x\in A$ such that $\Vert a-xbx^*\Vert<\varepsilon$. One also says that $a$ and $b$ are \emph{Cuntz equivalent}, in symbols $a\sim b$, provided $a\precsim b$ and $b\precsim a$. The Cuntz semigroup of $A$ is defined as $\Cu(A)=(A\otimes\mathcal{K})_+/{\sim}$, where $A\otimes \mathcal{K}$ is the stabilization of $A$. The classes of elements in $\Cu(A)$ are denoted by $[a]$, where $a\in (A\otimes\mathcal{K})_+$.
\begin{cor}
Let $A$ be a C*-algebra of stable rank one. Let $x,y\in \Cu(A)$ and $[p]\in \Cu(A)$ with $p$ a projection in $A\otimes \mathcal{K}$. If $x+[p]\leq y+[p]$, then $x\leq y$.
\end{cor}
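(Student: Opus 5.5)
The plan is to transfer the problem to the algebraic setting, where \autoref{cancelacio cp(r) fg} applies, and then transfer the conclusion back. The bridge is the comparison between $\Cu(A)$ and $\CP(A)$ from \cite[Section 7]{AntAraBosPerVil25}, which says that $\Cu(A)$ is a retract of $\CP(A)$. Two facts are needed about this retraction. There is an order-preserving additive map $\iota\colon \Cu(A)\to\CP(A)$ together with an order-preserving additive map $\pi\colon\CP(A)\to\Cu(A)$ satisfying $\pi\circ\iota=\mathrm{id}$. Moreover, for a projection $p\in A\otimes\mathcal K$, we should have $\iota([p])=[T]$, where $T$ is the finitely generated projective module determined by $p$. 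This makes sense because $p$ lies in some $M_n(\tilde A)$ up to equivalence, so $T=p\tilde A^n$ with $TA=T$. If $A$ is non-unital, I will pass to the unitization $\tilde A$, which also has stable rank one.

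\textbf{Step 1.} Since $x+[p]\le y+[p]$ and $\iota$ is additive and order-preserving, we get $\iota(x)+[T]\le\iota(y)+[T]$ in $\CP(A)$. Choose representatives $P,Q\in\mathcal{CP}(A)$ with $\iota(x)=[P]$ and $\iota(y)=[Q]$. Then $P\oplus T\precsim Q\oplus T$ as modules over $\tilde A$.

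\textbf{Step 2.} The C*-algebra $A$ has stable rank one, and in the Banach-algebra setting this is the ring-theoretic notion of stable rank one, since invertibility in $\tilde A$ coincides with algebraic invertibility. Hence $\tilde A$ is a unital ring of stable rank one. By \autoref{cancelacio cp(r) fg} applied over $\tilde A$, we conclude $P\precsim Q$, that is, $\iota(x)\le\iota(y)$. If one prefers, \autoref{cancelacio fg} can be used directly, since $\mathrm{End}(T)\cong p M_n(\tilde A)p$ is a corner and therefore has stable rank one.

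\textbf{Step 3.} Applying $\pi$ gives $x=\pi\iota(x)\le\pi\iota(y)=y$, as required.

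The main obstacle is Step 1's identification. I need to confirm, from the precise statements in \cite[Section 7]{AntAraBosPerVil25}, that the retraction maps are additive and order-preserving, and that the class of a projection in $\Cu(A)$ corresponds to the class of the associated finitely generated projective module. This should hold because a projection $p$ corresponds to the Hilbert module $p\ell^2(A)$, whose underlying algebraic module is $p\tilde A^n$. If the retraction is stated only at the level of positive elements, I would first check directly that for a projection the corresponding element of $\CP(A)$ is the class of the finitely generated projective module $p\tilde A^n$.
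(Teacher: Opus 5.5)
Your proposal is correct and is essentially the paper's own argument. You push the inequality into $\CP(A)$ via the order-preserving additive section $\Cu(A)\to\CP(A)$ from (the proof of) \cite[Theorem 7.6]{AntAraBosPerVil25}, which sends $[p]$ to the class of a finitely generated projective module, cancel using \autoref{cancelacio cp(r) fg} (or \autoref{cancelacio fg}), and then apply the retraction $\CP(A)\to\Cu(A)$.

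One small correction to the justification in your Step 2: C*-stable rank one means density of invertibles in $\tilde A$, and it gives the ring-theoretic (Bass) condition because density of invertibles implies Bass stable rank one (Rieffel, with equality for C*-algebras by Herman--Vaserstein), not merely because Banach and algebraic invertibility agree.
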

\begin{proof}
By (the proof of) \cite[Theorem 7.6]{AntAraBosPerVil25}, there are order-preserving semigroup morphisms $\varphi\colon \Cu(A)\to \CP(A)$ and $\psi\colon \CP(A)\to \Cu(A)$ such that $\psi\circ\varphi=\mathrm{id}_{\Cu(A)}$. Further, $\varphi([p])$ is the class of a finitely generated projective $A$-module. Therefore, if $x+[p]\leq y+[p]$, after applying $\varphi$ and invoking \autoref{cancelacio cp(r) fg} we obtain that $\varphi(x)\leq \varphi(y)$. Hence, applying $\psi$ we see that $x\leq y$.
\end{proof}

Our next goal is to obtain an analogue, for the Cuntz semigroup of a ring with stable rank one, of the cancellation result for the Cuntz semigroup of a $C^*$-algebra with stable rank one obtained by R\o rdam and Winter in \cite[Theorem 4.2]{RorWin10ZRevisited}. The problem may be formulated as follows. For simplicity, we shall only consider unital rings.

\begin{prbl}
\label{prbl:main-cancellation-problem}
Let $R$ be a unital ring of stable rank one, and let $A,B,C,D$ be countably generated right $R$-modules. Suppose that $A\oplus C \precsim B\oplus D,$ and that $D\prec C.$ Is it necessarily true that $A \precsim B$? If not, under what additional assumptions does this hold?
\end{prbl}

We have not been able to answer this question in full generality. In the remainder of this section, we consider several cases in which it admits a positive answer.

\begin{prp} \label{Y direct summand}
Let $R$ be a unital ring of stable rank one, and let $A,B,C,D$ be right $R$-modules. Suppose that $A\oplus C \precsim B\oplus D,$ and that $D\prec_Y C$, where $Y$ is contained in a finitely generated projective direct summand of $C$. Then $A \precsim B$.   
\end{prp}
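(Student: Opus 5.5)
Let $T$ be a finitely generated projective direct summand of $C$ containing $Y$, and write $C=T\oplus C'$. The plan is to transfer the relation $D\prec_Y C$ into a statement of the form $D\precsim T$, so that $C$ can be replaced by something that absorbs $D$ with $T$ as a common summand. Then \autoref{cancelacio cp(r) fg} removes $T$. Concretely, I would show
\[
A\oplus T\precsim B\oplus T,
\]
and conclude $A\precsim B$ from \autoref{cancelacio cp(r) fg}. Since $T$ is finitely generated projective over a stable rank one ring, $\mathrm{sr}(\mathrm{End}_R(T))=1$, so \autoref{cancelacio fg} applies to arbitrary modules $A,B$.

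Step 1: show that $D\precsim T$. Let $X\subseteq D$ be finitely generated. By $D\prec_Y C$ there are $\phi\colon X\to C$ and $\psi\colon C\to D$ with $\psi\phi=\mathrm{id}_X$ and $\phi(X)\subseteq Y\subseteq T$. Then $\phi$ corestricts to $X\to T$, and $\psi|_T\colon T\to D$ satisfies $\psi|_T\circ\phi=\mathrm{id}_X$. Hence $D\precsim T$.

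Step 2: show that $A\oplus T\precsim B\oplus T$. Since $T$ is a direct summand of $C$, the projection $C\to T$ and the inclusion $T\to C$ give $T\precsim C$. Then
\[
A\oplus T\precsim A\oplus C\precsim B\oplus D\precsim B\oplus T,
\]
where the last step uses Step 1 together with additivity of $\precsim$. For additivity and transitivity with arbitrary modules, I would verify directly from the definition that if $M\precsim N$ and $M'\precsim N'$ then $M\oplus M'\precsim N\oplus N'$. A finitely generated submodule of $M\oplus M'$ sits inside $X_1\oplus X_2$, where $X_1,X_2$ are its projections, which are finitely generated. One then takes direct sums of the factorizations.

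Transitivity is the delicate part for non-projective modules. Suppose $M\precsim N\precsim L$ and $X\subseteq M$ is finitely generated, with $\phi\colon X\to N$ and $\psi\colon N\to M$. Then $\phi(X)$ is finitely generated, so $N\precsim L$ gives $\phi'\colon \phi(X)\to L$ and $\psi'\colon L\to N$ with $\psi'\phi'=\mathrm{id}_{\phi(X)}$. Composing, we get $(\psi\psi')(\phi'\phi)=\mathrm{id}_X$. So transitivity holds for arbitrary modules, and the chain in Step 2 is valid.

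I expect the main obstacle to be exactly these bookkeeping points for general, not necessarily projective, modules: additivity, and the fact that a finitely generated submodule of a direct sum embeds in a sum of finitely generated pieces. Once those are in place, the cancellation step is a direct invocation of \autoref{cancelacio fg}.
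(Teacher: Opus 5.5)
Your proposal is correct and follows essentially the same route as the paper. The paper likewise shows $D\precsim T\precsim C$ for the finitely generated projective summand $T\supseteq Y$, chains $A\oplus T\precsim A\oplus C\precsim B\oplus D\precsim B\oplus T$, and cancels $T$ via \autoref{cancelacio cp(r) fg}. Your explicit checks of transitivity and additivity of $\precsim$ for arbitrary, not necessarily projective, modules supply details that the paper uses without comment.
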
 

\begin{proof}
Let $Y'$ be a finitely generated projective direct summand of $C$ that contains $Y$. Since $D\prec_Y C$ and $Y \subseteq Y'$, it is easy to verify that $D \precsim Y'$. And since $Y'$ is a direct summand of $C$, one also easily obtains that $Y' \precsim C$. Hence, 
\begin{equation*}
    A \oplus Y' \precsim A \oplus C \precsim B \oplus D \precsim B \oplus Y'.
\end{equation*}
Using~\autoref{cancelacio cp(r) fg} yields $A \precsim B$, as desired. 
\end{proof}

As an immediate consequence of the previous proposition and \cite{PunMcGRoth07} and \cite{Warfield1972}, we obtain the following.

\begin{cor}
Let $R$ be a unital ring with stable rank one such that every projective $R$-module is a direct sum of finitely generated modules. Let $A,B,C,D$ be countably generated projective right $R$-modules. Suppose that $A\oplus C \precsim B\oplus D,$ and that $D\prec C$. Then $A \precsim B$. In particular, this is the case for weakly semihereditary rings, exchange rings, and one-sided principal ideal rings. 
\end{cor}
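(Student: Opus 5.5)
The plan is to reduce everything to \autoref{Y direct summand}. Under the standing hypothesis on $R$, every projective module is a direct sum of finitely generated ones, so we can write $C=\bigoplus_{i\in I}C_i$ with each $C_i$ finitely generated, and hence projective. Since $C$ is countably generated, $I$ can be taken countable, though this is not essential. By hypothesis $D\prec C$, so we may fix a finitely generated submodule $Y\subseteq C$ with $D\prec_Y C$.

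Next we use that $Y$ is finitely generated. Each of its finitely many generators has only finitely many nonzero coordinates in the decomposition $C=\bigoplus_i C_i$. Hence there is a finite subset $F\subseteq I$ with $Y\subseteq Y':=\bigoplus_{i\in F}C_i$. The module $Y'$ is a finite direct sum of finitely generated projective modules, so it is finitely generated and projective. It is also a direct summand of $C$, with complement $\bigoplus_{i\notin F}C_i$.

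Thus all hypotheses of \autoref{Y direct summand} hold: $A\oplus C\precsim B\oplus D$, and $D\prec_Y C$ with $Y$ contained in a finitely generated projective direct summand of $C$. That proposition gives $A\precsim B$. Note that it is itself proved via \autoref{cancelacio cp(r) fg}, which uses stable rank one.

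For the "in particular" clause, the only real content is to cite that each listed class has every projective module a direct sum of finitely generated modules, and that each can be intersected with stable rank one.
\begin{itemize}
\item For exchange rings, this is \cite{Warfield1972}: projective modules over exchange rings are direct sums of cyclic modules of the form $eR$.
\item For weakly semihereditary rings and one-sided principal ideal rings, this is the content of \cite{PunMcGRoth07}.
\end{itemize}
The main (mild) obstacle is therefore bookkeeping rather than mathematics: checking that the cited decomposition theorems apply to the stated classes as formulated, and observing that finite generation of $Y$ forces it into a finite subsum.
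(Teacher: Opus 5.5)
Your proposal is correct and matches the paper's approach. The paper states this corollary as an immediate consequence of \autoref{Y direct summand} together with \cite{PunMcGRoth07} and \cite{Warfield1972}. Your observation is exactly the unwritten step that makes it immediate: the finitely generated witness $Y$ of $D\prec_Y C$ lies in a finite subsum of $C=\bigoplus_i C_i$, and that subsum is a finitely generated projective direct summand of $C$.
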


\begin{cor}
Let $R$ be a unital ring of stable rank one, and let $A,B,C,D$ be countably generated projective right $R$-modules. Suppose that $A\oplus C \precsim B\oplus D,$ that $D\precsim C$, and that either $C$ or $D$ is finitely generated. Then $A \precsim B$.
\end{cor}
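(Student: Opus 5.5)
The plan is to reduce to \autoref{cancelacio cp(r) fg} by sandwiching a finitely generated projective module $T$ between $D$ and $C$. There are two cases, according to which of $C$ and $D$ is finitely generated.

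\emph{Case 1: $C$ is finitely generated.} Since $C$ is a finitely generated projective module, $C \precsim C$ trivially. We also have $D \precsim C$ by hypothesis. Adding these comparisons (using additivity of $\precsim$ in $\CP(R)$) gives
\[
A\oplus C\precsim B\oplus D\precsim B\oplus C.
\]
\autoref{cancelacio cp(r) fg} with $T=C$ then yields $A\precsim B$.

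\emph{Case 2: $D$ is finitely generated.} From $D\precsim C$, taking $X=D$ in the definition, we get maps $\phi\colon D\to C$ and $\psi\colon C\to D$ with $\psi\circ\phi=\mathrm{id}_D$. So $\phi$ is a split monomorphism, and $C\cong D\oplus C'$ with $C'=\ker\psi$, a countably generated projective module (a direct summand of $C$). Hence
\[
A\oplus C'\oplus D\cong A\oplus C\precsim B\oplus D.
\]
Applying \autoref{cancelacio cp(r) fg} with $T=D$ gives $A\oplus C'\precsim B$. Since $A\precsim A\oplus C'$ (the summand inclusion and projection witness this), we conclude $A\precsim B$.

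Alternatively, Case 2 can be routed through \autoref{Y direct summand}. Since $D$ is finitely generated and $D\precsim C$, the image $\phi(D)$ is a finitely generated direct summand of $C$ isomorphic to $D$. It witnesses $D\prec_{\phi(D)} C$, and one can take $Y'=\phi(D)$ in that proposition. I would nonetheless present the direct argument above, since it avoids the $\prec$ bookkeeping.

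The only mildly delicate points are the following.
\begin{itemize}
\item Check that $\precsim$ is compatible with direct sums and with the isomorphism $C\cong D\oplus C'$ at the level of modules. This follows from $\CP(R)$ being a positively ordered monoid, \cite[Lemma 4.5]{AntAraBosPerVil25}.
\item In Case 2, check that $C'$ is countably generated projective, so that \autoref{cancelacio cp(r) fg} applies. Note that the corollary is in fact stated for arbitrary modules $P$ and $Q$, so even this check is not needed.
\end{itemize}
I expect no serious obstacle here.
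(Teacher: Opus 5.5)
Your proof is correct and is essentially the paper's argument. The paper routes both cases through \autoref{Y direct summand}, taking $Y'=C$ when $C$ is finitely generated and $Y'$ a direct summand of $C$ isomorphic to $D$ when $D$ is; that proposition's proof is exactly the chain $A\oplus Y'\precsim A\oplus C\precsim B\oplus D\precsim B\oplus Y'$ followed by \autoref{cancelacio cp(r) fg}, and your Case 2 (split $C\cong D\oplus C'$, cancel $D$, then use $A\precsim A\oplus C'$) is just a harmless reordering of those same steps.
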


\begin{proof}
If $C$ is finitely generated then $D \prec_C C$, and $C$ is a finitely generated direct summand of itself. If $D$ is finitely generated, then $D \precsim C$ implies that $D$ is isomorphic to a direct summand $Y$ of $C$ (see \cite[Remark 4.6]{AntAraBosPerVil25}), and hence $D \prec_Y C$. Thus, in either case, the hypothesis of~\autoref{Y direct summand} are satisfied, and the conclusion follows.
\end{proof}

In view of the proof of~\autoref{Y direct summand}, a natural way to relax the assumption that $Y$ is contained in a finitely generated direct summand of $C$ is to require the existence of a module $E$ and a finitely generated projective module $Y'$ such that $Y' \precsim C \oplus E$ and $D \oplus E \precsim Y'$. This motivates the following:

\begin{pgr}[Complements]
\label{dfn:complement}
Let $R$ be a unital ring and let $D, C$ be countably generated right $R$-modules such that $D \subseteq C \subseteq R^n$ for some $n \in \mathbb{N}$. We say that the right $R$-module $E$ is a \emph{complement of} $D \subseteq C$ \emph{in} $R^n$ if it is a countably generated submodule of $R^n$ such that $C + E = R^n$, $D \cap E = 0$ and $D + E$ is a pure submodule of $R^n$. 

In the extreme case that $D = C$, we have that $E$ is a complement of $D \subseteq C$ in $R^n$ if and only if $C \oplus E = R^n$, that is, if $E$ is a complement of $C$ in $R^n$ in the usual sense. Of course, this implies the strong conclusion that $C$ is a finitely generated projective module.

Observe also that any complement $E$ of $D\subseteq C$ in $R^n$ satisfies that $D+E =D\oplus E$ is a pure submodule of $R^n$. Hence, we necessarily have that both $D$ and $E$ are pure in $R^n$, and it follows from~\autoref{rmk:pures-are-projective} that $D$ and $E$ are countably generated projective submodules of $R^n$.

\end{pgr}

\begin{lma}
\label{rmk:remarks-for-complement}
Let $R$ be a unital ring and let $D\subseteq C$ be countably generated right $R$-submodules of $R^n$ for some $n \in \mathbb{N}$. If $E$ is a complement of $D \subseteq C$ in $R^n$, then
\begin{enumerate}[label=\rm{(\roman*)}]
    \item $D \oplus E \precsim R^n \precsim C \oplus E$, and 
    \item $D \prec C$.
\end{enumerate}
\end{lma}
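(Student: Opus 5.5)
For (i), the plan is to treat the two inequalities separately, using only the defining properties of a complement: $C+E=R^n$, $D\cap E=0$, and $D\oplus E$ pure in $R^n$. We will also use that $D$ and $E$ are countably generated projective (see \autoref{dfn:complement}). The relation $\precsim$ here is the one extended to arbitrary modules, although $C$ need not be projective anyway.

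\emph{Step 1: $D\oplus E\precsim R^n$.} Since $D+E=D\oplus E$ is a countably generated pure submodule of the projective module $R^n$, and is itself projective, \autoref{impl right to left} gives $D\oplus E\precsim R^n$ directly.

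\emph{Step 2: $R^n\precsim C\oplus E$.} We use the addition map $\mu\colon C\oplus E\to R^n$, $(c,e)\mapsto c+e$, which is surjective because $C+E=R^n$. Since $R^n$ is projective, $\mu$ splits: there is $s\colon R^n\to C\oplus E$ with $\mu\circ s=\mathrm{id}_{R^n}$. Given a finitely generated submodule $X\subseteq R^n$, take $\phi=s_{|X}$ and $\psi=\mu$. This yields the required factorization of the inclusion of $X$, so $R^n\precsim C\oplus E$.

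For (ii), we must produce a finitely generated $Y\subseteq C$ witnessing $D\prec_Y C$. First, choose lifts $(c_i,e_i)$ of the basis vectors $\epsilon_i$ of $R^n$, so that $\epsilon_i=c_i+e_i$, and let $Y=\sum_i c_iR\subseteq C$. Now fix a finitely generated $X\subseteq D$. Purity of $D\oplus E$ in $R^n$ means the inclusion is locally split, so there is $\sigma\colon R^n\to D\oplus E$ restricting to the identity on $X$. Let $\pi_D\colon D\oplus E\to D$ be the projection along $E$, which is well defined since $D\cap E=0$, and set $\sigma_D=\pi_D\circ\sigma\colon R^n\to D$. Then $\sigma_D$ is the identity on $X$.

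Next define $\phi\colon X\to C$ by expanding each $x\in X\subseteq R^n$ in the basis, $x=\sum_i\epsilon_i x_i$, and setting $\phi(x)=\sum_i c_ix_i$. This lies in $Y$, and $x-\phi(x)=\sum_i e_ix_i\in E$. Take $\psi=(\sigma_D)_{|C}\colon C\to D$. Then
\[
\psi(\phi(x))=\sigma_D(x)-\sigma_D\Bigl(\sum_i e_ix_i\Bigr).
\]
The first term is $\sigma_D(x)=x$.

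The main obstacle is the correction term $\sigma_D\bigl(\sum_i e_ix_i\bigr)$, which must vanish. It does not vanish automatically, because $\sigma$ is only the identity on $X$, not on the elements of $E$ that appear. The fix is to choose $\sigma$ using purity applied to the larger finitely generated submodule $X+\sum_i e_iR\subseteq D\oplus E$ rather than to $X$ alone. Then $\sigma$ fixes every $e_i$, so $\sigma_D(e_i)=\pi_D(e_i)=0$. Since $\sigma_D$ is $R$-linear, $\sigma_D\bigl(\sum_i e_ix_i\bigr)=\sum_i\sigma_D(e_i)x_i=0$, and hence $\psi\circ\phi=\mathrm{id}_X$ with $\phi(X)\subseteq Y$. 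Because $Y$ was fixed before $X$ was chosen, this shows $D\prec_Y C$.
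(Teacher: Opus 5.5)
Your proof is correct and follows essentially the same route as the paper. Your lifts $(c_i,e_i)$ of the basis vectors are a coordinate description of the paper's splitting $\sigma$ of the sum map $\mu$, so your $Y=\sum_i c_iR$ is the paper's $\pi_C(\sigma(R^n))$, and applying purity to $X+\sum_i e_iR=X+\pi_E(\sigma(R^n))$ is exactly the paper's key step; part (i) is identical, using purity of $D\oplus E$ via \autoref{impl right to left} and the splitting of $\mu$.
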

\begin{proof}
(i): As we have observed in \autoref{dfn:complement}, $D + E = D \oplus E$ and, since $D + E$ is pure in $R^n$, \autoref{impl right to left} implies that $D \oplus E \precsim R^n$. In addition, consider the map $\mu \colon C \oplus E \rightarrow R^n$ given by $\mu(c,e) = c + e$. Since $C + E = R^n$, $\mu$ is surjective, and thus it splits as $R^n$ is projective. Therefore, there exists $\sigma \colon R^n \rightarrow C \oplus E$ such that $\mu \circ \sigma = \mathrm{id}_{R^n}$. Consequently, $\sigma(R^n)$ is a direct summand of $C \oplus E$. Since $R^n \cong \sigma(R^n)$ we obtain $R^n \precsim C \oplus E$.

(ii): Retain the notation in the paragraph above, and set $Y \coloneqq \pi_C(\sigma(R^n))$, which is a finitely generated submodule of $C$, where now $\pi_C \colon C \oplus E \rightarrow C$ is the natural projection map. 
We prove $D \prec_Y C$. Let $X$ be a finitely generated submodule of $D$. Then $X + \pi_E(\sigma(R^n))$ is a finitely generated submodule of $D + E$, where $\pi_E \colon C \oplus E \rightarrow E$ is the natural projection map. Since $D + E \leq_{\mathrm{pure}} R^n$, there exists a homomorphism $\rho\colon R^n \rightarrow D + E$ such that
\begin{equation} \label{eq rk rho}
    \rho_{|X + \pi_E(\sigma(R^n))} = \mathrm{id}_{X + \pi_E(\sigma(R^n))}.
\end{equation}
Observe that since $\mu \circ \sigma = \mathrm{id}_{R^n}$, for $x \in X$ we have $\pi_C(\sigma(x)) = x - \pi_E(\sigma(x))$. Now, set 
\begin{equation*}
    \phi \coloneqq \pi_C \circ \sigma_{|X} \colon X \longrightarrow C \qquad \text{ and } \qquad \psi \coloneqq \pi_D \circ \rho_{|C} \colon C \longrightarrow D,
\end{equation*}
where $\pi_D \colon D + E \rightarrow D$ is the natural projection map, which is well-defined because $D \cap E = 0$. Then, if $x \in X$, we have
\begin{equation*}
    \psi\circ\phi (x) = \pi_D(\rho(\pi_C(\sigma(x)))) = \pi_D(\rho(x - \pi_E(\sigma(x)))) = \pi_D(x - \pi_E(\sigma(x))) = x,
\end{equation*}
where the third equality follows from (\ref{eq rk rho}) and the fourth equality holds since $x \in X \subseteq D$, $\pi_E(\sigma(x)) \in E$, and $D \cap E = 0$. Hence, we have a factorization of the inclusion of $X$ in $D$ through $C$. Moreover, $\phi(X) \subseteq \pi_C(\sigma(R^n)) = Y$, as desired.
\end{proof}

\begin{rmk}
It is a well-known and widely used fact that the Cuntz semigroup $\Cu(A)$ of a C*-algebra $A$ satisfies the axiom of almost algebraic order, most commonly referred to as axiom (O5); see \cite[Lemma 7.1]{RorWin10ZRevisited}, and also \cite[Proposition 4.6]{AntPerThi14arX:TensorProdCu}. Under the assumption of stable rank one, this axiom reads as follows: if $x',x,y\in \Cu(A)$ satisfy $x'\ll x\leq y$, then there is $z\in \Cu(A)$ such that $x'+z\leq y\leq x+z$. Our \autoref{rmk:remarks-for-complement} above certifies that having complements in the sense of \autoref{dfn:complement} is an indication that a version of (O5) for the Cuntz semigroup of a ring might be possible.
\end{rmk}

Combining our observations, we obtain the following result.

\begin{thm} \label{complement implica cancelacio}
Let $R$ be a unital ring with stable rank one and let $A,B,C,D$ be countably generated projective right $R$-modules with $D \subseteq C \subseteq R^n$ for some $n \in \mathbb{N}$. Suppose that
\begin{equation*}
    A \oplus C \precsim B \oplus D
\end{equation*}
and that $D \subseteq C$ has a complement in $R^n$. Then $A \precsim B$. 
\end{thm}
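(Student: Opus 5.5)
Let $E$ be a complement of $D\subseteq C$ in $R^n$. The plan is to add $E$ to both sides of the hypothesis and use \autoref{rmk:remarks-for-complement}(i), which gives $D\oplus E\precsim R^n\precsim C\oplus E$. Since $\CP(R)$ is a positively ordered monoid, $\precsim$ is compatible with direct sums (\cite[Lemma 4.5]{AntAraBosPerVil25}). Note also that $E$ is a countably generated projective module by the observation in \autoref{dfn:complement}, so all modules involved lie in $\mathcal{CP}(R)$.

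From $A\oplus C\precsim B\oplus D$ we get
\[
A\oplus R^n\precsim A\oplus C\oplus E\precsim B\oplus D\oplus E\precsim B\oplus R^n .
\]
Here the first inequality uses $R^n\precsim C\oplus E$ after adding $A$. The second adds $E$ to the hypothesis. The third uses $D\oplus E\precsim R^n$ after adding $B$. We need $\precsim$ to be transitive on $\mathcal{CP}(R)$, which holds because $\CP(R)$ is a partially ordered set obtained from a preorder.

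Now $T=R^n$ is finitely generated projective and $R$ has stable rank one, so \autoref{cancelacio cp(r) fg}, or directly \autoref{cancelacio fg} with $\mathrm{sr}(M_n(R))=1$, yields $A\precsim B$.

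There is essentially no obstacle: all the content sits in \autoref{rmk:remarks-for-complement} and \autoref{cancelacio fg}. The only points to check are that $\precsim$ is preserved by adding a summand and is transitive, and that the modules are countably generated projective so that we stay inside $\mathcal{CP}(R)$. Both are covered by the cited results.
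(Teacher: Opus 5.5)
Your proposal is correct and is essentially the paper's own proof: take a complement $E$, use \autoref{rmk:remarks-for-complement}(i) to get the chain $A\oplus R^n\precsim A\oplus C\oplus E\precsim B\oplus D\oplus E\precsim B\oplus R^n$, and then cancel $R^n$ via \autoref{cancelacio fg}, using $\mathrm{sr}(M_n(R))=1$. Your remarks on additivity, transitivity and $E\in\mathcal{CP}(R)$ only spell out points the paper leaves implicit.
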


\begin{proof}
Let $E$ be a complement of $D \subseteq C$ in $R^n$. By condition (i) in \autoref{rmk:remarks-for-complement}, we get 
\begin{equation*}
A \oplus R^n \precsim A \oplus C \oplus E \precsim B \oplus D \oplus E \precsim B \oplus R^n,
\end{equation*}
so by~\autoref{cancelacio fg} we obtain $A \precsim B$. 
\end{proof}

It is not difficult to produce examples of pure submodules $D\subseteq C\subseteq R^n$, with $D\prec C$, such that the pair $D\subseteq C$ has no complement in $R^n$. We conclude this section by presenting a case in which complements exist. Before doing so, we give a characterization of when the direct sum of two pure submodules of a projective module is pure.

\begin{prp}
    \label{prp:sum-of-pures}
Let $P,Q,M$ be projective modules such that $P$ and $Q$ are pure submodules of $M$. Suppose that 
$P\cap Q= 0$. Then $P+Q$ is pure in $M$ if and only if for each pair of finitely generated submodules $X$ and $Y$ of $P$ and $Q$, respectively, there exists $\phi_1\colon M \to P$ and $\phi_2\colon M\to Q$ such that $\phi_1\phi_2 = 0=\phi_2\phi_1$,
$\phi_1|_X= \mathrm{id}_X$, and $\phi_2|_Y=\mathrm{id}_Y$.
    \end{prp}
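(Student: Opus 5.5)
Both directions rest on the characterization recalled in \autoref{pgr:pure}: since $M$ is projective, a submodule $N\subseteq M$ is pure exactly when the inclusion is locally split. That is, for every finitely generated $F\subseteq N$ there is $\sigma\colon M\to N$ with $\sigma|_F=\mathrm{id}_F$. Since $P\cap Q=0$, we have $P+Q=P\oplus Q$, and we write $\pi_P,\pi_Q$ for the two coordinate projections of $P\oplus Q$.

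\emph{($\Rightarrow$).} Let $X\subseteq P$ and $Y\subseteq Q$ be finitely generated. Then $X+Y$ is a finitely generated submodule of $P\oplus Q$, so purity gives $\rho\colon M\to P\oplus Q$ with $\rho|_{X+Y}=\mathrm{id}$. Set $\phi_1:=\pi_P\rho$ and $\phi_2:=\pi_Q\rho$, viewed as maps $M\to P$ and $M\to Q$. Clearly $\phi_1|_X=\mathrm{id}_X$ and $\phi_2|_Y=\mathrm{id}_Y$.

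However, $\phi_1\phi_2$ need not vanish, since $\rho$ is only the identity on $X+Y$ and not on all of $P\oplus Q$. This orthogonality is the main obstacle, and I would handle it by a standard idempotent-correction trick.
\begin{itemize}
\item First enlarge the finitely generated modules. Since $P$ and $Q$ are projective, the dual basis lemma (as in \autoref{projectius i k(p)}) gives finitely generated $X'\supseteq X$ in $P$ and $Y'\supseteq Y$ in $Q$ such that any map agreeing with the identity on $X'+Y'$ acts as the identity on the relevant images.
\item Then build the correction. With $\rho$ as above, set $\phi_1:=\pi_P\rho(1-\iota\pi_Q\rho)$ and $\phi_2:=\pi_Q\rho$, where $\iota$ is the inclusion $P\oplus Q\hookrightarrow M$, and check the relations directly.
\end{itemize}
A cleaner alternative is to choose $\rho$ so that it is a genuine idempotent projection onto a finitely generated piece. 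Concretely, pick a finitely generated $Z\subseteq P\oplus Q$ and a map $\rho$ with $\rho|_Z=\mathrm{id}$ and $\rho(M)\subseteq Z$. Using local splitness twice, choose $\theta\in K(M,P\oplus Q)$ that is the identity on $X+Y$, let $Z$ be the module generated by its finitely many "$q_i$" terms, and then pick $\rho'$ that is the identity on $Z$. Now set $\rho:=\rho'\theta$. Then $\rho$ is the identity on $X+Y$, and $\rho\rho$ agrees with $\rho$ on the image of $\theta$ up to the $\theta$ factor. From this one gets $\pi_P\rho\,\pi_Q\rho=\pi_P\rho'\theta\pi_Q\rho'\theta$, and the cross terms vanish because $\rho'$ is the identity on $Z\ni$ the values of $\theta\pi_Q\rho'\theta$, so $\pi_P$ kills them. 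I expect this bookkeeping, making $\pi_P\rho\,\iota\,\pi_Q\rho=0$ hold exactly, to be the delicate step. If needed, I would instead use the two-step trick of \autoref{Y direct summand}: take $\phi_2=\pi_Q\rho$ first, then define $\phi_1=\pi_P\rho\circ(1-\phi_2)$ and $\phi_2'=\phi_2\circ(1-\phi_1)$, and check each relation on $X$ and $Y$ separately.

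\emph{($\Leftarrow$).} Let $F\subseteq P\oplus Q$ be finitely generated. Its projections $X:=\pi_P(F)$ and $Y:=\pi_Q(F)$ are finitely generated. Take $\phi_1,\phi_2$ as in the hypothesis and define $\sigma:=\phi_1+\phi_2-\phi_1\phi_2$ (or simply $\phi_1+\phi_2$, using orthogonality) as a map $M\to P\oplus Q$. For $f=x+y\in F$ with $x\in X$, $y\in Y$, we get $\phi_1(x)=x$ and $\phi_2(y)=y$. The cross terms are $\phi_1(y)=\phi_1\phi_2(y)=0$ and $\phi_2(x)=\phi_2\phi_1(x)=0$. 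Hence $\sigma(f)=f$, so the inclusion $P\oplus Q\hookrightarrow M$ is locally split, and $P+Q$ is pure in $M$ by \autoref{pgr:pure}.
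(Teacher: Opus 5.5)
Your ($\Leftarrow$) direction is correct and is the paper's argument: $\phi_1+\phi_2$ is the identity on $\pi_P(F)+\pi_Q(F)\supseteq F$, because $\phi_1(y)=\phi_1\phi_2(y)=0$ and $\phi_2(x)=\phi_2\phi_1(x)=0$.

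The ($\Rightarrow$) direction has a genuine gap. Orthogonality is the whole content of this implication, and none of your three constructions produces it. The reason is that the splitting map $\rho$ is controlled only on $X+Y$, while the images of $\pi_P\rho$ and $\pi_Q\rho$ can leave $X+Y$; factors such as $1-\pi_Q\rho$ do not repair this. For a concrete failure, let $R$ be a field, $M=R^4$, $P=e_1R+e_3R$, $Q=e_2R+e_4R$, $X=e_1R$, $Y=e_2R$, and let $\rho$ fix $e_1,e_2$ and swap $e_3,e_4$. Then:
\begin{itemize}
\item $\pi_P\rho\,\pi_Q\rho(e_3)=e_3$, so the naive choice fails;
\item $\pi_P\rho(1-\pi_Q\rho)\pi_Q\rho(e_3)=e_3$, so your first correction fails;
\item $\phi_1\phi_2'(e_4)=-e_3$, so your two-step variant fails.
\end{itemize}
Your ``cleaner alternative'' also collapses. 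Since $\rho'$ is the identity on $Z\supseteq\theta(M)$, you get $\rho'\theta=\theta$, so you are back to $\phi_1=\pi_P\theta$ and $\phi_2=\pi_Q\theta$; taking $\theta=\rho$ in the example gives the same failure. The claimed vanishing of cross terms is false: for $q\in Q$ the element $\theta(q)$ need not lie in $Q$, so $\pi_P$ does not kill it. Also, \autoref{Y direct summand} contains no such two-step trick; it is a cancellation argument.

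The missing idea is to compress inside $P$ and inside $Q$ \emph{separately}, and only afterwards choose the splitting, requiring it to be the identity on the compressed images.
\begin{itemize}
\item By the dual basis lemma for $P$ and for $Q$ (as in \autoref{projectius i k(p)}), choose finitely generated $X\subseteq X'\subseteq P$ and $Y\subseteq Y'\subseteq Q$, with maps $\rho_1\colon P\to X'$ and $\rho_2\colon Q\to Y'$ such that $\rho_1|_X=\mathrm{id}_X$ and $\rho_2|_Y=\mathrm{id}_Y$.
\item Apply purity of $P\oplus Q$ to the larger module $X'+Y'$ to get $\omega\colon M\to P\oplus Q$ with $\omega|_{X'+Y'}=\mathrm{id}$.
\item Set $\phi_1=\rho_1\pi_P\omega$ and $\phi_2=\rho_2\pi_Q\omega$.
\end{itemize}
Then $\phi_2(M)\subseteq Y'$ and $\pi_P\omega(Y')=\pi_P(Y')=0$, so $\phi_1\phi_2=0$; symmetrically $\phi_2\phi_1=0$. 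Moreover $\phi_1|_X=\rho_1|_X=\mathrm{id}_X$ and $\phi_2|_Y=\mathrm{id}_Y$.

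Your first bullet gestures at this enlargement, but your formulas lack two essential features:
\begin{itemize}
\item The factor of $\phi_1$ applied first must be the one that is the identity on $\phi_2(M)$. In your $\pi_P\rho'\theta$ it is instead the uncontrolled map $\theta$.
\item The compressing maps must be endomorphisms of $P$ and of $Q$ individually, so that $\phi_2(M)$ lands in $Y'\subseteq Q$. A single $\theta\in K(M,P\oplus Q)$ does not achieve this.
\end{itemize}
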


\begin{proof}
Suppose that the stated condition is satisfied. Then, given finitely generated modules $X$ and $Y$ of $P$ and $Q$, take 
$\phi:=\phi_1+\phi_2\colon M \to P+Q$, where $\phi_i$ are as in the statement. Then for $y\in Y$
$$\phi_1 (y) = \phi_1(\phi_2 (y))= 0.$$
Similarly $\phi_2 (x) =0$ for all $x\in X$. It follows that $\phi|_{X+Y}=\mathrm{id}_{X+Y}$. This shows that $P+Q$ is pure in $M$.

Conversely, assume that $P+Q$ is pure in $M$. Let $X$ be a finitely generated submodule of $P$ and $Y$ a finitely generated submodule of $Q$. 
Since $P$ is projective, there exists a f.g.
submodule $X'$ of $P$ such that $X\subseteq X'$ and $\rho_1 \in K(P, X')$ such that $\rho_1|_X= \mathrm{id}_X$.  
Similarly, we can find a f.g. submodule $Y'$ of $Q$ such that $Y\subseteq Y'$ and $\rho_2\colon Q \to Y'$ such that
$\rho_2|_Y= \mathrm{id}_Y$. 

Now since $P+Q$ is pure in $M$, there exists $\omega \colon M \to P+Q$ such that $\omega|_{X'+Y'} = \mathrm{id}_{X'+Y'}$.
Let $\omega_1= \pi_P\circ \omega \colon M\to P$, and $\omega_2=\pi_Q\circ \omega\colon M\to Q$. Here $\pi_P$ and $\pi_Q$ are the projections on 
$P$ and $Q$ respectively, induced by the decomposition $P+Q=P\oplus Q$. 

Observe that for $y'\in Y'$ we have
$$\omega_1 (y') =\pi_P (\omega (y')) = \pi_P (y') =0 ,$$
since $y'\in Q$. Similarly $\omega_2(X')=0$. 
Now set $\phi_1:= \rho_1 \circ \omega_1\colon M\to X'\subseteq P$ and $\phi_2:= \rho_2\circ \omega_2 \colon M \to Y'\subseteq Q$.
Observe that 
$$(\omega_1 \circ \rho_2)(Q) \subseteq \omega_1 (Y')=0,\qquad (\omega_2\circ \rho_1)(P) \subseteq \omega_2 (X') = 0,$$
hence $\phi_1\circ \phi_2 = \rho_1\circ (\omega_1\circ \rho_2)\circ \omega_2 =0$ and similarly $\phi_2\circ \phi_1 = 0$.
Moreover, it is clear that $\phi_1|_X= \mathrm{id}_X$ and $\phi_2|_Y= \mathrm{id}_Y$. 

This concludes the proof. 
\end{proof}

\begin{pgr}[Left normal rings]
\label{pgr:normal}
Let $R$ be an arbitrary ring. For given elements $a, b \in R$, we write $a \prec_l b$ provided $ba = a$ and $a \prec_r b$ provided $ab = a$. We say that $R$ is \emph{left normal} if, whenever $a, b, c \in R$ satisfy $a \prec_l b \prec_l c$, then there are $d, e \in R$ such that $a \prec_l d \prec_l e \prec_l c$. We point out that this terminology differs from that of \cite[7.2]{AntAraBosPerVil26}, where $R$ is said to be left normal if $M_\infty(R)$, rather than $R$, is left normal in the sense above. Following \cite{ACPQ}, we reserve the term \emph{stably left normal} for this later property, and use \emph{left normal} for the property defined directly on $R$. Although the notion was first introduced in \cite[7.2]{AntAraBosPerVil26}, we adopt the terminology of \cite{ACPQ}, to which we also refer the reader for a thorough discussion of the notion, its connections to related concepts, and examples of classes of rings satisfying it. There is also a `right version' of the condition, using $\prec_r$ instead of $\prec_l$, which yields the concept of a right normal ring. In case $R$ is left and right normal, then we simply say that it is \emph{normal}. It is easy to see that, if $R$ is a unital ring, the two versions are in fact equivalent, hence a left normal unital ring is always normal. We also define $a \ll b$ provided that $a \prec_l b$ and $a \prec_r b$.
\end{pgr}

\begin{prp} \label{prp existencia complement}
Let $R$ be a unital ring, and let $C$ and $D$ be countably generated projective right $R$-modules with $$D \leq_{\mathrm{pure}} C \subseteq R^n$$ for some $n \in \mathbb{N}$. Suppose that $M_n(R)$ is normal, and suppose there exist homomorphisms $\theta_1, \theta_2, \theta_3 \in K(R^n, C)$ such that $\theta_1 \ll \theta_2 \ll \theta_3$ and ${\theta_1}_{|D} = \mathrm{id}_D$.~Then $D \subseteq C$ has a complement in $R^n$. 
\end{prp}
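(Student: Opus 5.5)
The plan is to build $E$ as an increasing union of finitely generated submodules of the form $(1-e_k)R^n$. Here $(e_k)$ is a decreasing chain in $M_n(R)\cong K(R^n)$ (see \autoref{k(r^n)}) lying between $\theta_1$ and $\theta_2$, and it is produced by normality of $M_n(R)$. Concretely, I aim for elements $e_k\in M_n(R)$ with
\[
\theta_1\ll \cdots \ll e_{k+1}\ll e_k\ll\cdots\ll e_1\ll \theta_2 ,
\]
and I set $E:=\bigcup_k (1-e_k)R^n$. Since $e_{k+1}\ll e_k$ gives $(1-e_{k+1})(1-e_k)=1-e_k$, the modules $(1-e_k)R^n$ increase. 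Hence $E$ is a countably generated submodule of $R^n$.

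\emph{Step 1: the chain.} Starting from $\theta_1\ll\theta_2\ll\theta_3$, normality of $M_n(R)$ gives $d,e$ with $\theta_1\ll d\ll e\ll\theta_3$. Applying it again to $\theta_1\ll d\ll e$ yields a new pair strictly inside, and so on; a diagonal choice produces the decreasing chain. Since $e_1\ll\theta_2$, each $e_k$ satisfies $e_k=\theta_2e_k$, so $e_k$ has image in $C$.

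I expect this step to be the main obstacle. Normality is stated for $\prec_l$ and $\prec_r$ separately, but I need two-sided relations $\ll$ at every stage. My first attempt is to run the left and right interpolations together: interpolate $\prec_l$ and $\prec_r$ in turn, and multiply the resulting elements, using that a product like $d_ld_r$ inherits both relations. If necessary I will shift the chain so that it sits below $\theta_2$ rather than $\theta_3$, which is exactly what the extra element $\theta_3$ allows.

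\emph{Step 2: the complement conditions.}
\begin{itemize}
\item[(a)] $C+E=R^n$: every $z\in R^n$ can be written $z=e_1z+(1-e_1)z$, with $e_1z\in C$ and $(1-e_1)z\in E$.
\item[(b)] $e_k$ fixes $D$: for $x\in D$ we have $x=\theta_1x$, so $e_kx=e_k\theta_1x=\theta_1x=x$.
\item[(c)] $D\cap E=0$: if $x=(1-e_k)z\in D$, then $x=\theta_1x=\theta_1(1-e_k)z=0$, because $\theta_1e_k=\theta_1$.
\item[(d)] $E$ is pure in $R^n$: for $y\in(1-e_k)R^n$ the map $1-e_{k+1}$ fixes $y$ and takes values in $E$. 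Any finitely generated submodule of $E$ lies in some $(1-e_k)R^n$, so the inclusion $E\subseteq R^n$ is locally split.
\end{itemize}

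\emph{Step 3: purity of $D\oplus E$.} I verify the criterion of \autoref{prp:sum-of-pures}, with $M=R^n$, $P=D$ and $Q=E$. These are projective: $D$ is pure in $C$, which is pure in $R^n$ via $\theta_1$, and then \autoref{rmk:pures-are-projective} applies. Let $X\subseteq D$ and $Y\subseteq E$ be finitely generated, and choose $k$ with $Y\subseteq(1-e_k)R^n$. Since $D\le_{\mathrm{pure}}C$, there is $\rho\colon C\to D$ with $\rho_{|X}=\mathrm{id}_X$. Put
\[
\phi_1:=\rho\circ\theta_1\colon R^n\to D,\qquad \phi_2:=1-e_{k+1}\colon R^n\to E .
\]
\begin{itemize}
\item $\phi_1$ fixes $X$, since $\theta_1$ fixes $D$ and $\rho$ fixes $X$.
\item $\phi_2$ fixes $Y$, by Step 2(d).
\item $\phi_1\phi_2=\rho(\theta_1-\theta_1e_{k+1})=0$, because $\theta_1e_{k+1}=\theta_1$.
\item $\phi_2\phi_1=0$, because $\phi_1$ lands in $D$ and $e_{k+1}$ fixes $D$ by Step 2(b).
\end{itemize}
Hence $D+E$ is pure in $R^n$, and $E$ is a complement of $D\subseteq C$ in $R^n$.
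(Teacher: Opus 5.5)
\textbf{Verdict: Step 1 is a genuine gap, but a repairable one.} You rightly flag it as the crux, and it remains unresolved.

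Normality of $M_n(R)$ only interpolates one-sided chains: $\prec_l$-chains and $\prec_r$-chains separately. Nothing in the hypotheses produces $d,e$ with $\theta_1\ll d\ll e\ll\theta_3$. Your proposed remedy also fails. From $d_la=a$ and $ad_r=a$ you get no control over $d_ld_ra=d_l(d_ra)$, $ad_ld_r=(ad_l)d_r$, $d_rd_la=d_ra$ or $ad_rd_l=ad_l$. For example, over a field $k$ take $a=E_{11}$, $d_l=E_{11}+E_{12}$ and $d_r=E_{11}+E_{21}$ in $M_2(k)$. Then $d_ld_r=2E_{11}$ and $d_rd_l=E_{11}+E_{12}+E_{21}+E_{22}$, and neither product satisfies either relation with $a$.

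Both halves of $\ll$ are actually used in your argument:
\begin{itemize}
\item Steps 2(a), 2(b) and the identity $\phi_2\phi_1=0$ use $e_1\prec_l\theta_2$ and $\theta_1\prec_l e_k$.
\item The monotonicity of $(1-e_k)R^n$, Steps 2(c), 2(d) and $\phi_1\phi_2=0$ use $e_{k+1}\prec_r e_k$ and $\theta_1\prec_r e_k$.
\end{itemize}
A one-sided chain delivers only one of these two families, so as written you have no chain that supports the whole argument.

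\textbf{The repair.} Keep only the $\prec_r$ family and change the two places that use the other.
\begin{itemize}
\item Apply right normality to $\theta_1\prec_r\theta_2\prec_r\theta_3$, and then repeatedly to $\theta_1\prec_r d_k\prec_r e_k$. This gives $\theta_1\prec_r d_{k+1}\prec_r e_{k+1}\prec_r e_k$ for all $k$, with $e_1\prec_r\theta_3$.
\item For $C+E=R^n$, write $z=\theta_3z+(1-\theta_3)z$. Here $\theta_3z\in C$, and $(1-\theta_3)z=(1-e_1)(1-\theta_3)z\in E$.
\item Discard 2(b) and set $\phi_2:=(1-e_{k+1})(1-\theta_1)$. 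It fixes $Y\subseteq(1-e_k)R^n$ because $(1-\theta_1)(1-e_k)=1-e_k$. Next, $\phi_1\phi_2=0$ because $\theta_1(1-e_{k+1})=0$. Finally, $\phi_2\phi_1=0$ because $1-\theta_1$ vanishes on $D\supseteq\phi_1(R^n)$.
\end{itemize}

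Writing $\sigma_k=1-e_k$, this is exactly the paper's proof. The paper uses only left normality to build $1-\theta_3\prec_l\sigma_1\prec_l\sigma_2\prec_l\cdots\prec_l 1-\theta_1$, sets $E=\bigcup_k\sigma_k(R^n)$, and takes $\phi_2=\sigma_k(1-\theta_1)$. So only the $\prec_r$ half of $\theta_1\ll\theta_2\ll\theta_3$ is ever needed.

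A small slip in Step 3: $\theta_1$ fixes $D$, not $C$, so it does not make $C$ pure in $R^n$. This is harmless, because $D$ is projective by hypothesis and pure in $R^n$ via $\rho\theta_1$.
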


\begin{proof}
Observe that from $\theta_1 \ll \theta_2 \ll \theta_3$ we obtain $1 - \theta_3 \ll 1 - \theta_2 \ll 1 - \theta_1$, where $1$ means the identity map of $R^n$ and, in particular, $1 - \theta_3 \prec_l 1 - \theta_2 \prec_l 1 - \theta_1$. Set $\sigma_1 \coloneqq 1 - \theta_3$. Since $K(R^n) \cong M_n(R)$ is a left normal ring, there exist homomorphisms $\sigma_2, \sigma_2' \in K(R^n)$ such that 
\begin{equation*}
    \sigma_1 \prec_l \sigma_2 \prec_l \sigma_2' \prec_l 1 - \theta_1.
\end{equation*}
Now, take $\sigma_2 \prec_l \sigma_2' \prec_l 1 - \theta_1$ and repeat the process. This gives us homomorphisms $\sigma_3, \sigma_3' \in K(R^n)$ such that 
\begin{equation*}
    \sigma_2 \prec_l \sigma_3 \prec_l \sigma_3' \prec_l 1 - \theta_1.
\end{equation*}
Repeating inductively this process we get a sequence of homomorphisms $\sigma_i \in K(R^n)$ such that for all $i \geq 2$, 
\begin{equation*}
    1 - \theta_3= \sigma_1 \prec_l \sigma_i \prec_l \sigma_{i+1} \prec_l 1 - \theta_1.
\end{equation*}

Set $E \coloneqq \bigcup_{i \geq 1} \sigma_i(R^n)$ and let us show that $E$ is a complement of $D \subseteq C$ in $R^n$. It is clear that $E \cong \varinjlim (R^n, \sigma_i)$, and a direct limit of this form is known to be a countably generated projective module; see for example~\cite[Theorem 2.1]{Whitehead}, or also~\cite[Lemma 4.7]{AntAraBosPerVil25}. 

Let $e = \sigma_k(x) \in E$ for some $k \in \mathbb{N}$ and $x \in R^n$, and suppose that $e$ also belongs to $D$. Since $\sigma_k \prec_l 1 - \theta_1$, $(1 - \theta_1)\sigma_k = \sigma_k$, and since $e \in D$, 
\begin{equation*}
    e = \sigma_k(x) = (1 - \theta_1)\sigma_k(x) = (1 - \theta_1)(e) = 0.
\end{equation*}
This proves $D \cap E = 0$, as desired. 

To prove that $E + C = R^n$, let $x \in R^n$ and decompose it as $x = (1 - \theta_3)(x) + \theta_3(x)$. The second summand clearly lies in $C$ and since $1 - \theta_3 \prec_l \sigma_i$ for all $i\ge 2$, we get $(1 - \theta_3)(x) \in E$. 

Finally, let us show that $D + E$ is pure in $R^n$. Let $X$ be a finitely generated submodule of $D$ and $Y$ a finitely generated submodule of $E$. Since $D$ is pure $C$, there exists $\psi \in \mathrm{Hom}_R(C, D)$ such that ${\psi}_{|X} = \mathrm{id}_X$. Set $\phi_1 \coloneqq \psi\theta_1 \in K(R^n, D)$ which still satisfies ${\phi_1}_{|X} = \mathrm{id}_X$.  Now, since $Y$ is finitely generated, there is $k \in \mathbb{N}$ such that ${\sigma_k}_{|Y} = \mathrm{id}_Y$. Set $\phi_2 \coloneqq \sigma_k(1 - \theta_1)$. Since ${(1 - \theta_1)}_{|E} = \mathrm{id}_{E}$, we have ${\phi_2}_{|Y} = \mathrm{id}_{Y}$. Moreover, $\psi$ takes values on $D$ and ${\theta_1}_{|D} = \mathrm{id}_D$. Hence, $\phi_2\phi_1 = \sigma_k(1 - \theta_1)\psi\theta_1 = 0$. On the other hand, since $ \theta_1 \sigma_i = 0$ for all $i\in \N$, we get $\phi_1\phi_2 = \psi\theta_1\sigma_k(1 - \theta_1) = 0$. Then, because of~\autoref{prp:sum-of-pures}, we conclude that $D + E$ is pure in $R^n$, as we wanted to prove. 
\end{proof}

\begin{cor}
\label{cor:complement}
Let $R$ be a normal commutative unital ring with stable rank one. Let $A, B \in \mathcal{CP}(R)$ and $I, J$ be countably generated projective modules and pure ideals of $R$ such that $A \oplus I \precsim B \oplus J$ and $J \prec I$. Then $A$ is isomorphic to a pure submodule of $B$.
\end{cor}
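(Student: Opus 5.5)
The plan is to reduce to \autoref{complement implica cancelacio} and then apply \autoref{main theorem}(ii). Since $I$ and $J$ are ideals of $R$, we are working inside $R^1=R$. We put $C \coloneqq I$ and $D\coloneqq J$ and aim to produce a complement of $J\subseteq I$ in $R$ via \autoref{prp existencia complement} with $n=1$. The required normality of $M_1(R)=R$ is part of the hypothesis. Once a complement exists, \autoref{complement implica cancelacio} gives $A\precsim B$. Then \autoref{main theorem}(ii), which applies because $R$ has stable rank one and $A,B\in\mathcal{CP}(R)$, shows that $A$ is isomorphic to a pure submodule of $B$.

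\textbf{Step 1: $J\subseteq I$ and $J$ is pure in $I$.} By \autoref{main theorem}(i), $J\prec I$ gives a pure embedding $J\cong J'\leq_{\mathrm{pure}} I$. This only gives an abstract copy, not the ideal $J$ itself. To work with the ideals, I would instead use commutativity together with purity of $J$ in $R$. Pure ideals of a commutative ring are exactly those with $x\in xJ$ for every $x\in J$. So every element of $J$ is fixed by multiplication by some element of $J$, and $J\subseteq I$ should follow from $J\precsim I$: a factorization $\psi\phi=\mathrm{id}$ on $xR$ with $x\in J$ can be compared through $R$-linear maps between ideals. I would verify this via annihilators, using that $\mathrm{Hom}(I,R)$ restricted to elements of a pure ideal acts by multiplication. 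Purity of $J$ in $I$ is then immediate, since $J$ is pure in $R$ and lies inside $I$.

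\textbf{Step 2: build $\theta_1\ll\theta_2\ll\theta_3$ in $K(R,I)\cong I$ with $\theta_1|_J=\mathrm{id}$.} Here $K(R,I)$ consists of multiplications by elements of $I$, and in the commutative setting $a\ll b$ just means $ab=a$. From $J\prec_Y I$ with $Y$ finitely generated, the plan is as follows. Take a finitely generated ideal $Y\subseteq I$ through which all of $J$ factors. Purity of $I$ gives $c\in I$ acting as the identity on $Y$; iterating purity on finitely generated pieces gives $c_3\in I$, then $c_2\in I$ with $c_2c_3=c_2$, then $c_1$ with $c_1c_2=c_1$. Each new element is chosen to be the identity on the finitely many generators of the previous one together with $Y$.

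\textbf{Step 3 (main obstacle): show that $\theta_1=c_1$ acts as the identity on all of $J$.} We know $J\prec_Y I$, so every $x\in J$ factors as $x=\psi(\phi(x))$ with $\phi(x)\in Y$. Commutativity should let us compute $c_1x=c_1\psi(\phi(x))=\psi(c_1\phi(x))=\psi(\phi(x))=x$. This computation needs care, because $\psi$ is only defined on $I$ and $\phi$ only on a finitely generated submodule. The key point I would check is $R$-linearity: $c_1\psi(y)=\psi(yc_1)$ for $y\in Y$. With that in hand, $c_1$ fixes $J$, so \autoref{prp existencia complement} yields the complement and the chain of reductions above finishes the proof.
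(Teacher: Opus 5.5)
Your proof is correct. The overall reduction is the same as the paper's: build $\theta_1\ll\theta_2\ll\theta_3$ in $K(R,I)\cong I$ by iterating purity of $I$, apply \autoref{prp existencia complement} with $n=1$, then \autoref{complement implica cancelacio} and \autoref{main theorem}(ii).

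\textbf{Where you differ.} The difference is in how you handle $J$. The paper applies \autoref{main theorem}(i) to $J\prec I$ to get an isomorphic copy $J'\leq_{\mathrm{pure}} I$ inside a finitely generated $Y\subseteq I$. It then takes $\theta_1$ fixing $Y$ (hence $J'$), finds a complement of $J'\subseteq I$, and implicitly uses $B\oplus J\cong B\oplus J'$. You work with the given ideal $J$ and use commutativity instead. If $Y$ witnesses $J\prec_Y I$ and $c_1\in I$ fixes $Y$, then for $x\in J$, writing $x=\psi(\phi(x))$ with $\phi(x)\in Y$,
\[
c_1x=\psi(\phi(x))\,c_1=\psi(\phi(x)c_1)=\psi(\phi(x))=x .
\]
This is just $R$-linearity of $\psi$, so your Step~3 is not a real obstacle.

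\textbf{What your route buys.} Since $c_1\in I$, the identity $c_1x=x$ already gives $J=c_1J\subseteq I$. So your Step~1, and its vague annihilator sketch, can be dropped. Purity of $J$ in $I$ then follows from purity of $J$ in $R$, as you say. Your route also avoids the approximation machinery of \autoref{main theorem}(i) at this point; stable rank one is needed only in the cancellation theorem and the final appeal to \autoref{main theorem}(ii). As a by-product, it shows that for pure ideals of a commutative ring, $J\prec I$ forces $J\subseteq I$. The paper's route leans less on commutativity: the passage to $J'$ works in general, and commutativity is used only to upgrade $\prec_l$ to $\ll$.

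\textbf{A labelling slip in Step~2.} As written, you ask for $c_2$ with $c_2c_3=c_2$ after choosing $c_3$. That would require an element below $c_3$ that still fixes $Y$, and purity does not provide this. Purity only produces elements that fix a given finitely generated piece, i.e.\ larger elements in the $\ll$ order. Build upward, as your final sentence in Step~2 actually describes:
\begin{itemize}
\item $\theta_1=c$ fixing $Y$;
\item $\theta_2$ fixing $\theta_1R$;
\item $\theta_3$ fixing $\theta_2R$.
\end{itemize}
Then $\theta_1$ is the element you feed into Step~3.
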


\begin{proof}
Since $J \prec I$,~\autoref{main theorem} provides a pure submodule $J' \leq_{\mathrm{pure}} I$ such that $J \cong J'$ and $J'$ is contained in a finitely generated submodule $Y \subseteq I$. Since $I$ is pure in $R$, there exists $\theta_1 \in K(R, I)$ such that $(\theta_1)_{|Y} = \mathrm{id}_Y$. In particular, $(\theta_1)_{|J'} = \mathrm{id}_{J'}$. Now, $\theta_1(R)$ is a cyclic submodule of $I$. Using $I \leq_{\mathrm{pure}} R$ again, there exists $\theta_2 \in K(R, I)$ such that $(\theta_2)_{|\theta_1(R)} = \mathrm{id}_{\theta_1(R)}$. Hence, $\theta_2\theta_1 = \theta_1$, that is, $\theta_1 \prec_l \theta_2$. Similarly, we obtain $\theta_3 \in K(R, I)$ such that $\theta_2 \prec_l \theta_3$. Observe that since $\theta_i \in K(R)\cong R$ and $R$ is commutative, the fact that $\theta_1 \prec_l \theta_2 \prec_l \theta_3$ implies that $\theta_1 \ll \theta_2 \ll \theta_3$. Then, by~\autoref{prp existencia complement}, $J' \subseteq I$ has a complement in $R$, so by~\autoref{complement implica cancelacio}, $A \precsim B$. Now, by~\autoref{main theorem}, $A$ is isomorphic to a pure submodule of $B$, as desired.
\end{proof}

\begin{rmk}
The rings to which the preceding proposition applies include large classes of rings, for example unital commutative exchange rings (\cite{ACPQ} and \cite[Theorem 6]{Yu1995}), or rings of complex-valued continuous functions on a one-dimensional space (\cite[Lemma 7.4]{AntAraBosPerVil26} and \cite{vaserstein1971}). 
\end{rmk}

\end{document}